\DeclareMathAlphabet{\mathup}{OT1}{\familydefault}{m}{n}
\newcommand{\dx}[1]{\mathop{}\!\mathup{d} #1}
\newcommand{\eps}{\varepsilon}
\DeclarePairedDelimiter{\abs}{\lvert}{\rvert}
\DeclarePairedDelimiter{\norm}{\lVert}{\rVert}
\DeclarePairedDelimiter{\bra}{(}{)}
\DeclarePairedDelimiter{\pra}{[}{]}
\DeclarePairedDelimiter{\set}{\{}{\}}
\DeclarePairedDelimiter{\skp}{\langle}{\rangle}
\newcommand{\customlabel}[2]{%
   \protected@write \@auxout {}{\string \newlabel {#1}{{#2}{\thepage}{#2}{#1}{}} }%
   \hypertarget{#1}{}
}
\numberwithin{figure}{section}
\newcommand{\dashover}[2][\mathop]{#1{\mathpalette\df@over{{\dashfill}{#2}}}}
\newcommand{\fillover}[2][\mathop]{#1{\mathpalette\df@over{{\solidfill}{#2}}}}
\newcommand{\df@over}[2]{\df@@over#1#2}
\newcommand\df@@over[3]{%
	\vbox{
		\offinterlineskip
		\ialign{##\cr
			#2{#1}\cr
			\noalign{\kern1pt}
			$\m@th#1#3$\cr
		}
	}%
}
\newcommand{\dashfill}[1]{%
	\kern-.5pt
	\xleaders\hbox{\kern.5pt\vrule height.4pt width \dash@width{#1}\kern.5pt}\hfill
	\kern-.5pt
}
\newcommand{\dash@width}[1]{%
	\ifx#1\displaystyle
	2pt
	\else
	\ifx#1\textstyle
	1.5pt
	\else
	\ifx#1\scriptstyle
	1.25pt
	\else
	\ifx#1\scriptscriptstyle
	1pt
	\fi
	\fi
	\fi
	\fi
}
\newcommand{\solidfill}[1]{\leaders\hrule\hfill}
 \def\calB{{\mathcal B}} 
\def\calD{{\mathcal D}} \def\calE{{\mathcal E}} 
  \def\calI{{\mathcal I}}
\def\calM{{\mathcal M}}  
\def\calP{{\mathcal P}}  \def\calR{{\mathcal R}}
\def\calS{{\mathcal S}}
\def\rmj{{\mathrm j}}  
\def\rmm{{\mathrm m}}
 \def\sfe{{\mathsf e}} 
\def\sfg{{\mathsf g}}  
 \def\sfn{{\mathsf n}} 
 \def\sft{{\mathsf t}} 
\def\sfv{{\mathsf v}} \def\sfw{{\mathsf w}}
 \def\sfE{{\mathsf E}} 
\def\sfG{{\mathsf G}}  
  \def\sfL{{\mathsf L}}
\def\sfV{{\mathsf V}}
  \def\scrL{{\mathscr  L}}
\newcommand{\R}{\mathbb{R}}
\newcommand{\N}{\mathbb{N}}
\newcommand{\bew}{{\rm BE}_w (c, \infty)}
\newcommand{\BE}[1]{{\rm BE} (#1, \infty)}
\newcommand{\CD}[1]{{\rm CD} (#1, \infty)}
\newcommand{\RCDw}{{\rm RCD}_w (c, \infty)}
\newcommand{\RCD}[1]{{\rm RCD} (#1, \infty)}
\newcommand{\EVIw}{{\rm EVI}_w(c)}
\newcommand{\EVI}[1]{{\rm EVI}_{#1}}
\newcommand{\Dom}{{\rm Dom}}
\newcommand{\Meas}{\calM}
\newcommand{\MeasPos}{\calM_{+}}
\newcommand{\Prob}{\calP}
\newcommand{\ProbAc}{\calP_{\rm ac}}
\newcommand{\leb}{\scrL}
\newcommand{\Ch}{{\rm{Ch}}}
\newcommand{\Ent}{\sfE\sfn\sft}
\newcommand{\AC}{{\rm AC}}
\newcommand{\lip}[1]{{\rm{lip}}_{#1}}
\newcommand{\Lip}[1]{{\rm{Lip}}_{#1}}
\newcommand{\setsep}{\,:\,}
\newcommand{\restrto}[1]{|_{#1}}
\newcommand{\normal}{\sfn}
\newcommand{\weak}[1]{\abs*{{\rm D} #1}_w}
\newcommand{\weakcvg}{\rightharpoonup}
\newcommand{\bnabla}{{\mathpalette\b@nabla\relax}}
\newcommand\b@nabla[2]{%
        \setbox\z@=\hbox{$\m@th#1\bigtriangledown$}%
        \ht\z@.7\ht\z@
        \raise\dp\z@\box\z@
}
\newtheorem{theorem}{Theorem}[section]
\newtheorem{lemma}[theorem]{Lemma}
\newtheorem{corollary}[theorem]{Corollary}
\newtheorem{definition}[theorem]{Definition}
\newtheorem{proposition}[theorem]{Proposition}
\newtheorem{remark}[theorem]{Remark}
\newtheorem{remark*}{Remark}
\numberwithin{equation}{section}
\newcommand{\oset}[3][0ex]{%
  \mathrel{\mathop{#3}\limits^{
    \vbox to#1{\kern-2\ex@
    \hbox{$\scriptstyle#2$}\vss}}}}
\NewDocumentCommand{\rmjmath}{}{\mathbin{\mathpalette\eplus@\relax\mspace{1mu}}}
\newcommand{\eplus@}[2]{\clipbox{-.5 -.5 0 {.35\height}}{$\m@th#1\rmj$}}
\newcommand{\one}{1\!\!1}
\definecolor{bluegray}{rgb}{0.4, 0.6, 0.8}
\title{Weak curvature conditions on metric graphs}
\author[1]{Juliane Krautz}
\affil[1]{{
\small
Universit\"{a}t Augsburg, Institut f\"ur Mathematik, Universit\"{a}tsstra\ss e 12a, 86159 Augsburg, Germany. Email: juliane.krautz@uni-a.de
}}
\date{\today}
\begin{document}

\maketitle

\begin{abstract}
    Starting from pointwise gradient estimates for the heat semigroup, we study three characterizations of weak lower curvature bounds on metric graphs.
    More precisely, we prove the equivalence between a weak notion of the Bakry-Émery curvature condition, a weak Evolutionary Variational Inequality and a weak form of geodesic convexity. The proof is based on a careful regularization of absolutely continuous curves together with an explicit representation of the Cheeger energy. 
    We conclude with a brief discussion on possible applications to the Schrödinger bridge problem on metric graphs.
\end{abstract}

\tableofcontents

\section{Introduction}

Over the past years, the study of synthetic curvature bounds on metric measure spaces and their relation has received considerable interest. 
In their seminal work, Lott-Villani \cite{lottvillani09} and Sturm \cite{sturm_geo1_06, sturm_geo2_06} introduced a new notion of curvature bound on these spaces using the displacement convexity of the logarithmic entropy. Let $(X,d,\rmm)$ be a metric measure space, that is a complete separable metric space $(X,d)$ endowed with a probability measure that has finite second moment, we write $\rmm\in\Prob_2(X)$. 
The logarithmic entropy on that space is given as
\begin{align}\label{eq:Entr_refmeas}
    \Ent_\rmm(\mu) = \int_X  f\log f \dx{\rmm} \quad \text{for}\quad \frac{\dx{\mu}}{\dx{\rmm}} =  f,
\end{align}
where $\frac{\dx{\mu}}{\dx{\rmm}}$ is the Radon-Nikodym density of $\mu$ with respect to $\rmm$. Let
\begin{align}\label{eq:DefW2}
    W^2_2(\mu_0,\mu_1) = \inf\limits_{\pi\in\Pi(\mu_0,\mu_1)} \iint_{X\times X} d^2(x,y) \dx{\pi(x,y)}
\end{align}
denote the $2$-Wasserstein distance for the set of transport plans
\begin{align}\label{eq:AdmPlans}
    \Pi(\mu_0,\mu_1) = \set*{\pi\in\Prob(X\times X) \setsep \pi(A\times X) = \mu_0(A), \, \pi(X\times B) = \mu_1(B), \, A,B\subset X \, \text{Borel}}
\end{align}
between $\mu_0\in \Prob_2(X)$ and $\mu_1\in\Prob_2(X)$. Indeed, $W_2$ defines a metric on $\Prob_2(X)$ and $(\Prob_2(X), W_2)$ is called Wasserstein space on $X$. The underlying space $(X,d,\rmm)$ is said to satisfy a $\CD{K}$-condition and to have curvature bounded from below by $K\in\R$ if the logarithmic entropy is geodesically $K$-convex, that is 
\begin{align}\label{eq:geo_cvx}
    \Ent_\rmm(\mu_s) \leq (1-s) \Ent_\rmm(\mu_0) + s \Ent_\rmm(\mu_1) - \frac{K}{2}s(1-s) W_2^2(\mu_0, \mu_1)
\end{align}
for all $s\in[0,1]$ and a Wasserstein geodesic $s\mapsto\mu_s$ connecting $\mu_0,\mu_1\in\Prob_2(X)$. 
Note that $\CD{K}$ is a special case of the Curvature-Dimension condition ${\rm CD}(K,N)$ introduced by \cite{sturm_geo2_06} and \cite{lottvillani09}. There, $N\in(0,\infty]$ is an upper bound on the dimension in addition to the lower curvature bound $K\in\R$. In the present paper, we restrict our analysis to the dimensionless case $N=\infty$.

If the underlying space is a Riemannian manifold $(M,g)$ and $\rmm = {\rm vol}_g$, the curvature bound given by $\CD{K}$ coincides with the geometric lower bound on the Ricci curvature of $M$. Since \eqref{eq:geo_cvx} is well-defined on metric measure spaces and does not depend on any Riemannian structure, $\CD{K}$ can be understood as an extension of this bound. 
However, it is possible for Finsler geometries to satisfy the $\CD{K}$-condition as well. This observation led to a refinement in \cite{ambrosio_MMS_14}, namely the introduction of spaces with Riemannian curvature bounded below satisfying a Riemannian Curvature-Dimension condition $\RCD{K}$.
In the same work, several equivalent characterizations of such spaces have been given, one of them by introducing the additional assumption of linearity of the heat flow, which is defined as the $L^2(X,\rmm)$-gradient flow of the Cheeger energy
\begin{align*}
    \Ch(f) = \frac{1}{2} \inf\set*{\liminf\limits_{n\to\infty} \int_X \abs{\nabla f_n}^2 \dx{\rmm} \setsep f_n~\text{is Lipschitz},\, \int_X\abs{f_n-f}^2\dx{\rmm} \to 0}
\end{align*}
and we write $\Dom(\Ch) = \set{f\in L^2(X) \setsep \Ch(f)\in\R}$ for its domain.
Linearity of this flow is equivalent to the Cheeger energy being a Dirichlet form, thus quadratic. Let $(P_t)_{t\geq0}$ denote the corresponding semigroup. By duality, it can be extended to the space of probability measures inducing the semigroup $(H_t)_{t\geq0}$. Among the powerful calculus tools developed in \cite{Ambrosio_2013}, it has been shown that $(H_t)_{t\geq0}$ coincides with the $W_2$-gradient flow of the logarithmic entropy on $\CD{K}$-spaces. This observation is a key ingredient in the proof of the following equivalent characterization. A metric measure space is an $\RCD{K}$-space if the heat flow satisfies an Evolutionary Variational Inequality, meaning that for all $\mu_0,\mu_1\in\Prob_2(X)$ with $\Ent_\rmm(\mu_1)<\infty$ the $W_2$-gradient flow of the logarithmic entropy exists and it holds that 
\begin{align}\label{eq:EVI}\tag{${\rm EVI}_K$}
    \frac{\dx{}}{\dx{t}} \frac{1}{2} W_2^2(H_t\mu_1,\mu_0) + \frac{K}{2}W_2^2(H_t\mu_1,\mu_0) + \Ent_\rmm(H_t\mu_1)\leq \Ent_\rmm(\mu_0) 
\end{align}
for a.e. $t>0$. 
Again, this notion is consistent with the classical Riemannian case and we refer to \cite{ambrosio_MMS_14} for more details. 

Another approach to generalizing curvature bounds has been developed in \cite{bakry_diffusions_1985} and originates from the theory of Dirichlet forms. Let $(X,\calB,\rmm)$ be a measure space, for example a metric measure space $(X,d,\rmm)$ with Borel $\sigma$-algebra induced by the metric. Further, let $\calE$ be a Dirichlet form on $X$ with Markov semigroup $(S_t)_{t\geq 0}$ and generator $\Delta$. We define
\begin{align*}
    \Gamma(f,g) \coloneqq \frac{1}{2} \bra*{\Delta(fg) - \Delta f g - f\Delta g},
\end{align*}
which is related to the Dirichlet form by
\begin{align*}
    \calE(f,g) = \int_X \Gamma(f,g)\dx{\rmm}.
\end{align*}
To shorten the notation, we write $\Gamma(f) = \Gamma(f,f)$.
In the particular case of the measure space being a Riemannian manifold $(M,\sfg)$ with volume measure $\rmm$, we can choose $\Delta = \Delta_\sfg$ as the Laplace-Beltrami operator and obtain $\Gamma(f,g) = \skp{\nabla f, \nabla g}_\sfg$. 
Omitting the dependence on the dimension by choosing $N=\infty$, an important consequence of Ricci-curvature bounded from below is the point-wise gradient estimate
\begin{align}\label{eq:BE}\tag{$\rm BE$}
    \Gamma(S_tf) \leq e^{-Kt}S_t\Gamma(f)
\end{align}
$\rmm$-a.e. in $M$ for all $t\geq 0$ and $f\in\Dom(\calE)$. We say that the condition $\BE{K}$ is satisfied if\eqref{eq:BE} holds.
This estimate does not depend on second order operators and can be formulated on metric measure spaces instead of Riemannian manifolds as well. For this reason, the optimal constant $K\in\R$ in \eqref{eq:BE} can be thought of as a lower bound on the curvature. On a Riemannian manifold this notion coincides with $K\in\R$ being a lower bound on the Ricci curvature as was shown in \cite{von_renesse_transport_2005}.

If the Cheeger energy is a Dirichlet form, or equivalently, if $P_t$ is linear for all $t\geq 0$, condition \eqref{eq:BE} can be stated for $\calE = \Ch$ with $S_t = P_t$.
Then, owing to the work of \cite{kuwada_duality_2010}, the bound \eqref{eq:BE} is equivalent to the contraction estimate
\begin{align}\label{eq:ContrEst}
    W_2(H_t\mu, H_t\nu) \leq e^{-Kt}W_2(\mu,\nu)
\end{align}
for all $t\geq 0$ and $\mu,\nu\in\Prob_2(X)$.
Therefore, it is crucial to identify the Cheeger energy and study its properties when establishing weak notions of curvature bounds. Under the additional linearity  assumption \cite{ambrosio_bakryemery_2015, erbar_equivalence_2015} have shown that \eqref{eq:BE} is equivalent to the $\RCD{K}$-condition, thus providing the relation
\begin{align}\label{eq:classical_equiv}
    \BE{K} \Longleftrightarrow \EVI{K} \Longleftrightarrow \RCD{K}
\end{align}
and therefore connecting both approaches.
In view of \eqref{eq:classical_equiv}, we call a metric measures space an $\RCD{K}$-space if, for $K\in\R$, any of $\BE{K}$, $\EVI{K}$ or $\RCD{K}$ holds true. 

These conditions allow to conclude several important properties, such as functional and geometric inequalities \cite{lott_weak_2007, villani_optimal_2009} or $\Gamma$-convergence results \cite{monsaingeon_dynamical_2023}.
Besides $\RCD{K}$-spaces covering a broad class of examples, many widely studied spaces do not satisfy any of the equivalent conditions from \eqref{eq:classical_equiv}. Metric graphs and sub-Riemannian manifolds provide prototypical examples where curvature bounds fail to hold true, see \cite{Erbar_2022, baudoin_differential_2018} or \cite{ambrosio2020heat}.  
For this reason, the question of extending curvature conditions to a weak notion received interest in recent years. The following observation serves as a starting point for possible extensions. Several spaces not satisfying an $\RCD{K}$-condition still admit a linear heat-flow and point-wise gradient bounds of the form 
\begin{align}\label{eq:weak_gradEstX}
    \Gamma(P_tf) \leq C^2e^{-2Kt} P_t\Gamma(f) \quad\text{for all } t\geq 0 \text{ and } f\in \Dom(\Ch)
\end{align}
with constants $C\geq1$ and $K\in\R$, see for example \cite{baudoin_differential_2018} for metric graphs and \cite{DRIVER2005340} for Heisenberg groups. We write $\bew$ for the condition above.
If $C=1$, then \eqref{eq:weak_gradEstX} coincides with \eqref{eq:BE} and we can think of $c(t) = e^{-Kt}$ as a curvature function, where $K\in\R$ is the lower curvature bound. In the more general case $C\geq1$ we call $c(t) = Ce^{-Kt}$ the curvature function as well and, following this analogy, we define the optimal constant $K\in\R$ to be a weak lower bound on the curvature of $(X,d,\rmm)$. In the spirit of \cite{kuwada_duality_2010}, estimate \eqref{eq:weak_gradEstX} is equivalent to
\begin{align}\label{eq:ContrEstw}
    W_2(H_t\mu_1, H_t\mu_0) \leq Ce^{-Kt}W_2(\mu_1,\mu_0) \quad\text{for all }  t\geq 0 \text{ and } \mu_1,\mu_0\in\Prob_2(X)
\end{align}
with the dual heat semigroup $(H_t)_{t\geq0}$. 
Following the ideas of \cite{ambrosio_bakryemery_2015}, an equivalence result similar to \eqref{eq:classical_equiv} has been shown in \cite{stefani_generalized_2022} for metric measure groups.
Due to the equivalence \eqref{eq:classical_equiv}, these spaces do not admit a geodesically convex entropy and the weak condition extending $\RCD{K}$ under \eqref{eq:weak_gradEstX} originates in the idea of quantifying this non-convexity. However, if the underlying space exhibits branching geodesics, the entropy along them might be infinite, which corresponds to $\Ent_\rmm(\mu_s) = \infty$ for the left-hand side of \eqref{eq:geo_cvx}. For this reason, the heat semigroup is employed to regularize possible singularities.  
We write $\RCDw$ for the resulting condition, meaning that
\begin{align}
    \Ent_\rmm(H_{t+h}\mu_s) \leq (1-s) &\Ent_\rmm(H_t\mu_0) + s \Ent_\rmm(H_t\mu_1) \notag \\&+ \frac{s(1-s)}{2h} \bra*{\frac{1}{R(t, t+h)} W_2^2(\mu_0, \mu_1) - W_2^2(H_t\mu_0, H_t\mu_1)} \label{eq:geo_cvx_weak}
\end{align}
holds for all $t\geq 0$, $h>0$ and $s\mapsto \mu_s$ a $W_2$-geodesic joining $\mu_0,\mu_1\in\Prob_2(X)$ with 
\begin{align*}
    R(t_0,t_1) = \int_0^1 c^{-2}\bra*{(1-s)t_0 + s t_1} \dx{s}.
\end{align*}
For $t=0$, this inequality reduces to a heated version of \eqref{eq:geo_cvx}, namely
\begin{align}\label{eq:heatetd_geo_cvx}
    \Ent_\rmm(H_{h}\mu_s) \leq (1-s) &\Ent_\rmm(\mu_0) + s \Ent_\rmm(\mu_1) + \frac{s(1-s)}{2h} \bra*{\frac{1}{R(0, h)} - 1} W_2^2(\mu_0, \mu_1).
\end{align}
If the space satisfies an $\RCD{K}$-condition, we have $c(t) = e^{-Kt}$ and one can show that $\frac{1}{h}\bra*{\frac{1}{R(0, h)} - 1} \to -K$ as $h\to 0$, therefore recovering \eqref{eq:EVI}.
The weak condition $\EVIw$ admits another reformulation that characterizes the defect to geodesic convexity in more detail. Let $\calI:\Prob(X)\to[0,\infty]$ with $\calI(f) = 4 \Ch(\sqrt{f})$ be the Fisher Information.
If the chain rule
\begin{align}
    \Ent_\rmm(H_h\mu_s) - \Ent_\rmm(\mu_s) = -\int_0^h \calI(H_\tau\mu_s)\dx{\tau}
\end{align}
holds, we can rewrite \eqref{eq:heatetd_geo_cvx} as the distorted convexity condition
\begin{align*}
    \Ent_\rmm(\mu_s) \leq (1-s) &\Ent_\rmm(\mu_0) + s \Ent_\rmm(\mu_1) + \omega(s),
\end{align*}
where we define
\begin{align*}
    \omega(s) \coloneqq \frac{s(1-s)}{2h} \bra*{\frac{1}{R(0, h)} - 1} W_2^2(\mu_0, \mu_1) + \int_0^h \calI(H_\tau\mu_s)\dx{\tau}
\end{align*}
for fixed $h>0$ quantifying the non-convexity.
The weak version of \eqref{eq:EVI} that turns out to be equivalent to \eqref{eq:BEw} and \eqref{eq:geo_cvx_weak} can be understood as an almost integrated version of the Evolutionary Variational Inequality. We write $\EVIw$ to denote that
\begin{align*}
    \frac{1}{2} W_2^2(H_{t_1}\mu_1, H_{t_0}\mu_0) - \frac{1}{2 R(t_0,t_1)}W_2^2(\mu_1, \mu_0) \leq (t_1-t_0)\bra*{\Ent_\rmm(H_{t_0}\mu_0) - \Ent_\rmm(H_{t_1}\mu_1)}
\end{align*}
holds for $\mu_0, \mu_1 \in \Prob_2(X)$ and $0\leq t_0 \leq t_1 \leq 1$. For a detailed overview on related literature with a particular focus on curvature conditions in sub-Riemannian manifolds, we refer the reader to \cite{stefani_generalized_2022} and the references therein. With these weak notions, their result reads
\begin{align}\label{eq:equiv_weak}
    \bew \Longleftrightarrow \EVIw  \Longleftrightarrow \RCDw,
\end{align}
giving rise to the definition of metric measure spaces satisfying weak lower curvature bounds by one of the three equivalent conditions above. 

To the author's best knowledge, the equivalence is only known for special metric measure groups so far, even though the idea of studying possible extensions to spaces like metric graphs or the Grushin plane was already formulated in \cite{stefani_generalized_2022}. The proof relies on the construction of a regularization of Wasserstein geodesics compatible with metric derivatives. In $\RCD{K}$-spaces this regularization is obtained using the heat semigroup and compatibility is inferred from the contraction estimate \eqref{eq:ContrEst}. For the metric derivatives it implies 
\begin{align*}
    \abs{\Dot{H_t\mu}_s} \leq e^{-2Kt} \abs*{\Dot{\mu}_s} \leq \abs*{\Dot{\mu}_s}.
\end{align*}
If the weak condition \eqref{eq:ContrEstw} holds instead, we only obtain 
\begin{align*}
    \abs{\Dot{H_t\mu}_s} \leq C e^{-2Kt} \abs*{\Dot{\mu}_s} \leq C\abs*{\Dot{\mu}_s}
\end{align*}
up to the constant $C\geq1$. Taking the limit as $t\to 0$ does not recover the speed of the original curve as long as $C>1$ and we are not in the $\RCD{K}$-setting.
For this reason, it is necessary to introduce a new approximation given by a family of curves of measures $(\mu_s^n)_{n\in\N}$ compatible with metric derivatives in the sense that  
\begin{align}\label{eq:Compatible}
    \lim\limits_{n\to\infty} \abs*{\Dot{\mu}^n_s} \leq \abs*{\Dot{\mu}_s}
\end{align}
and such that $\mu^n_s \weakcvg \mu_s$.
In the case of metric measure groups, this is achieved by means of the operation of group convolution. As soon as we leave the group setting, this approximation is no longer applicable. Additionally the characterization of the gradient flow of $\Ent_\rmm$ as the heat equation has to be known a priori.
However, on metric graphs both issues have been resolved in \cite{Erbar_2022}. There, the authors introduced a regularization that satisfies \eqref{eq:Compatible} and showed that the heat equation can be understood as the $W_2$-gradient flow of $\Ent_\rmm$. Since metric graphs admit weak gradient bounds, see \cite{baudoin_differential_2018}, this observation provides the starting point of our analysis with the aim of extending \eqref{eq:equiv_weak}. \\

\textbf{Contributions.}
The main contribution of the present article is the proof of the equivalences from \eqref{eq:equiv_weak} on metric graphs. We rely on the regularization introduced in \cite{Erbar_2022} and adapt the arguments presented in the metric measure group setting, therefore extending \cite{stefani_generalized_2022}[Theorem 5.16]. As a first step, we show that the Cheeger energy coincides with the Dirichlet energy, allowing us to connect the abstract setting of metric measure spaces to the gradient bound from \cite{baudoin_differential_2018}. This identification relies on the identity $W^{1,\infty} = {\rm Lip}$ on metric graphs for which we give a proof as we could not find it in the literature. Here, ${\rm Lip}$ denotes the space of Lipschitz functions.
Moreover, we introduce the notion of strongly regular curves on metric graphs. For arbitrary absolutely continuous curves we then construct an approximating sequence of strongly regular curves. In particular, the action and entropy are well-behaved along weak limits. Employing this procedure together with a regularization of the entropy, we then derive an estimate similar to $\EVIw$. Starting from this estimate, we conclude the equivalences by showing that the inequality is preserved under taking the limit in the regularization. \\

\textbf{Structure of the Paper.}
The paper is structured as follows. 
In Section \ref{sec:Prelim} we define metric graphs as well as introducing the heat equation on such spaces. In Section \ref{sec:Heat} we study this equation and its two interpretations as a gradient flow of $\Ch$ and $\Ent_\rmm$. To this end, we provide an identification of the Cheeger energy in Theorem \ref{thm:IdCheeger}. Moreover, we introduce the regularization of measures on graphs applied throughout the article in Definition \ref{def:Regular}.
In Section \ref{sec:CurvatureBnds} we state and prove the equivalences in Theorem \ref{thm:equivalence}. As a first step, we show that \eqref{eq:weak_gradEstX} is equivalent to \eqref{eq:ContrEstw} in Theorem \ref{thm:kuwada_dual}. The proof relies on known results in metric measure spaces and the identification of the Cheeger energy from the previous section.
We introduce strongly regular curves and show existence of such in Theorem \ref{thm:ExStrongReg} employing the regularization from Definition \ref{def:Regular}.
Lastly, we establish action and entropy estimates along such curves, allowing us to conclude the proof.

\section{Preliminaries}\label{sec:Prelim}

Let $\sfG = (\sfE, \sfV)$ be a connected combinatorial graph with a finite set of vertices $\sfV$ and edges $\sfE\subset\sfV\times\sfV$.
We endow the graph with a map $\ell:\sfE \to (0,\infty)$, $\ell(\sfe) =: \ell_\sfe$ and define an orientation on each edge using the outer normal 
\begin{align*}
    \sfn^\sfe(\sfv) \coloneqq \begin{cases}
    -1 &: \sfe = (\sfv,\sfw) \\
    +1 &: \sfe = (\sfw,\sfv) \\
    0 &: \sfv\notin \sfe
\end{cases},
\end{align*}
allowing us to identify edges with intervals $[-\frac{\ell_\sfe}{2}, \frac{\ell_\sfe}{2}]$. We call the triple $\sfG=(\sfV, \sfE, \ell)$ a \emph{metric graph}.
For each $\sfv\in\sfV$ we further denote the set of incident edges by $\sfE(\sfv) \coloneqq \{\sfe\in\sfE \ | \sfv\in \sfe\}$ and define the spaces
\begin{align}\label{eq:DomGraph}
    \sfL \coloneqq \bigsqcup\limits_{\sfe\in\sfE} \pra*{-\frac{\ell_\sfe}{2}, \frac{\ell_\sfe}{2}} \quad \text{and} \quad \sfG \coloneqq \sfL/\sim,
\end{align}
where the first set is a collection of the one-dimensional intervals associated to the edges. In the second set $\sfG$ we identify vertices uniquely using the equivalence relation $\sim$. Together with the \emph{graph distance} $d: \sfG \times \sfG \to [0,\infty)$, which is the length of the shortest path between two points on $\sfG$, the tuple $(\sfG, d)$ defines a metric space.
We denote by $\Prob(\sfG)$ the space of probability measures on the metric graph, by $\Meas(\sfG)$ the space of Radon measures and by $\MeasPos(\sfG)$ the space of non-negative Radon measures. Additionally, let $\ProbAc(\sfG)$ be the set of probability measures that are absolutely continuous with respect to the Lebesgue measure $\leb$. Here, $\leb\in \MeasPos(\sfG)$ is the measure on $\sfG$ induced by the Lebesgue measure on $\sfL$ under the equivalence relation $\sim$. The corresponding Lebesgue spaces are denoted by $L^p(\sfG)$ for $p\in[1,\infty]$ with norm $\norm{\cdot}_{L^p(\sfG)}$. In particular, $(\sfG,d)$ is a geodesic space, rendering the Wasserstein space $(\Prob(\sfG), W_2)$ a geodesic space as well.

Let $(X,d)$ be an arbitrary metric space. For any closed interval $I\subset\R$ and $p\in[1,\infty]$ we call $\gamma:I\to X$ \emph{absolutely continuous}, and write $\gamma\in \AC^p\bra*{I; X}$, if there exists $g\in L^p(I)$ such that for all $t_0,t_1\in I$ with $t_0<t_1$ it holds that
\begin{align}\label{eq:DefAC}
    d\bra*{\gamma_{t_0}, \gamma_{t_1}} \leq \int_{t_0}^{t_1} g(\tau)\dx{\tau}.
\end{align}
By \cite{ambrosio_gradient_2008}[Theorem 1.1.2], every absolutely continuous curve $\gamma\in\AC^p\bra*{I;X}$ admits a \emph{metric derivative} $\abs*{\Dot{\gamma}_t}\in L^p(I)$ given as the smallest function $g\in L^p(I)$ satisfying \eqref{eq:DefAC}.
On a metric graph we define the \emph{local} and \emph{global Lipschitz constant} of a function $f:\sfG\to\R\cup\set{\infty}$ as 
\begin{align*}
    \lip{f}\bra{x} \coloneqq \limsup\limits_{y\to x} \frac{\abs*{f(x) - f(y)}}{d(x,y)} \quad \text{and} \quad \Lip{f} \coloneqq \sup\limits_{y\neq x } \frac{\abs*{f(x) - f(y)}}{d(x,y)}
\end{align*}
for $x\in\Dom(f) = \set*{x\in\sfG \setsep f(x)\in\R}$. 
Since $(\sfG,d)$ is a geodesic space, we have $\Lip{f} = \sup_{x\in\sfG} \lip{f}\bra{x}$.
The space of Lipschitz functions is denoted by ${\rm Lip}(\sfG)$, while $C(\sfG)$ is the space of continuous functions equipped with the supremum norm $\norm{\cdot}_{C(\sfG)}$. The space $C^1(\sfG)$ contains all functions from $C(\sfG)$ that are continuously differentiable on each edge of the graph and we denote the space of non-negative Lipschitz functions by ${\rm Lip}_{+}(\sfG)$.
In the same way, for $p\in[1,\infty]$ we define the Sobolev space $W^{1,p}(\sfG)$ as the space of functions in $L^p(\sfG)$ such that their weak derivative exists as a function in $L^p(\sfe)$ on each edge $\sfe\in\sfE$. As was shown in \cite{mugnolo_semigroup_2014}[Lemma 3.27], every element $f\in W^{1,p}(\sfG)$ admits a continuous representative in $C(\sfG)$ uniquely determined in the vertices.
Note that $\sfG$ is compact by definition, which is why all probability measures have finite second moment and why duality holds between $\Meas(\sfG)$ and $C(\sfG)$. We say that a sequence of measures $(\mu^n)_{n\in\N}\subset \Meas(\sfG)$ \emph{converges weakly} to another measure $\mu\in\Meas(\sfG)$, and write $\mu^n\weakcvg\mu$, if for all $\varphi\in C(\sfG)$ it holds that 
\begin{align*}
    \int_\sfG \varphi \dx{\mu^n} \to \int_\sfG \varphi\dx{\mu} \quad \text{as} \quad n\to\infty.
\end{align*}

Curvature bounds are closely related to the heat equation and the resulting semigroup on the underlying space. On metric graphs, this equation is understood in the following sense.
For each edge $\sfe\in\sfE$, we consider the one-dimensional heat equation on $[-\frac{\ell_\sfe}{2}, \frac{\ell_\sfe}{2}]$, coupling them in the vertices. Depending on the choice of coupling conditions, we obtain different systems. Throughout this article, we impose the so-called \emph{standard coupling conditions}. For $f^\sfe\coloneqq f\restrto{\sfe}$ they lead to
\begin{align}\label{eq:heat}
    \begin{cases}
        \partial_t f^\sfe(t,x) = \partial_{xx} f^\sfe(t,x) &t\geq 0, \, x\in\sfe,\, \sfe\in\sfE\\
        \sum_{\sfe\in\sfE(\sfv)} \partial_x f^\sfe(t,\sfv)\cdot \normal^\sfe(\sfv) = 0 &t\geq 0, \, \sfv\in\sfV\\
        f^\sfe(t,\sfv) = f^{\sfe'}(t,\sfv) &t\geq 0, \, \sfe,\sfe'\in\sfE(\sfv)
    \end{cases}
\end{align}
with initial condition $f(0,\cdot) = f_0(\cdot)$. 
Existence of solutions to \eqref{eq:heat} has been studied in \cite{fijavz_variational_2007} employing semigroup techniques and in \cite{Erbar_2022} using minimizing movements. 
As was first shown in \cite{roth_spectre_1984} and later extended to more general coupling conditions in \cite{kostrykin_heat_2007}, solutions to the heat equation can be characterized by a heat kernel $p_t(x,y)$. 
We collect some useful properties of the semigroup and its kernel below. 

\begin{lemma}\label{lem:heat_kernel}
    Let $\sfG=(\sfV,\sfE,\ell)$ be a metric graph and let $g_t(z) = \bra*{2\pi t}^{-\frac{1}{2}} e^{-\frac{z^2}{4t}}$ be the usual one-dimensional heat kernel. Then, there exists a heat kernel $p_t:\sfG\times\sfG\to\R$ for $t>0$ such that the following properties hold true. 
    \begin{enumerate}[label=\roman*)]
        \item For $T>0$ there exists a constant $C_{\rm up}(T)>0$ such that $0 \leq p_t(x,y) \leq C_{\rm up}(t) g_t(d(x,y))$ for all $0<t\leq T$ and $\leb$-a.e. $x,y\in\sfG$.
        \item The kernel is continuous. Moreover $\partial_t p_t(x,y) \in C(\sfG)$ for all $t>0$, $x,y\in\sfG$ and $\partial_x p_t(x,y) \in C(\sfL)$, $\partial_{xx} p_t(x,y) \in C(\sfL)$ for all $t>0$, $x,y\in\sfL$. 
        \item For all $t>0$ and $x\in \sfG$ it holds that $\norm{p_t(x,\cdot)}_{L^1(\sfG)} = 1$.
    \end{enumerate}
\end{lemma}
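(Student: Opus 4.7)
The lemma collects standard properties of the heat semigroup associated to \eqref{eq:heat}, and I would assemble it from the existing literature. The starting point is that the bilinear form $\calE(f,g) = \sum_{\sfe\in\sfE} \int_\sfe \partial_x f^\sfe\, \partial_x g^\sfe\dx{x}$ with domain $W^{1,2}(\sfG)$ is a regular symmetric Dirichlet form whose generator is the Laplacian equipped with the standard coupling conditions \eqref{eq:heat}. The induced semigroup $(P_t)_{t\geq 0}$ is Markovian, and the one-dimensional Sobolev embeddings on each edge give ultracontractivity; hence $P_t$ admits a nonnegative integral kernel $p_t(x,y)$ with $P_tf(x) = \int_\sfG p_t(x,y)f(y)\dx{\leb(y)}$. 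Existence with the standard coupling was obtained in \cite{roth_spectre_1984} and extended in \cite{kostrykin_heat_2007}.

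For (i), I would follow an edge-by-edge approach. Away from the vertices, each restriction $p_t^\sfe$ satisfies the one-dimensional heat equation and is thus locally dominated by the Gaussian $g_t$ evaluated along each of the finitely many paths joining $x$ to $y$. A reflection/unfolding argument at each vertex, together with the finiteness of $\sfE$, then bounds the resulting sum by $C_{\rm up}(T)\, g_t(d(x,y))$ on $0<t\leq T$, with the constant absorbing the number of paths of a given length and the maximal vertex degree; alternatively one may invoke the general Gaussian upper bounds for Dirichlet forms on doubling spaces applied to $(\sfG,d,\leb)$. Property (ii) follows from the analyticity of the semigroup in $t$ combined with interior parabolic regularity of the one-dimensional heat equation on each edge: continuity of $p_t(\cdot,\cdot)$ across vertices is the third line of \eqref{eq:heat}, while the fact that $\partial_x p_t$ and $\partial_{xx} p_t$ only extend continuously to $\sfL$ (and not to $\sfG$) reflects that the coupling conditions do not force these derivatives to match at the vertices.

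Property (iii) is the conservativeness of the semigroup and follows from the fact that the constant function $\one$ lies in the domain of the generator with $\Delta \one = 0$, so that $P_t\one = \one$ and
\begin{align*}
    \int_\sfG p_t(x,y)\dx{\leb(y)} = P_t\one(x) = 1;
\end{align*}
equivalently, one can differentiate in time and integrate by parts, using the Kirchhoff condition $\sum_{\sfe\in\sfE(\sfv)} \partial_y p_t^\sfe(x,\sfv)\cdot \normal^\sfe(\sfv) = 0$ at every vertex to see that the time derivative vanishes. The main obstacle is the estimate in (i): to obtain the clean form $C_{\rm up}(T)\, g_t(d(x,y))$, one must carefully sum the local Gaussian contributions coming from the (potentially numerous) combinatorial paths between $x$ and $y$, and argue that this sum is controlled uniformly in $t\in(0,T]$ by a constant depending only on $T$ and the graph, so that the decay in $d(x,y)$ is captured by the Gaussian associated to the shortest such path.
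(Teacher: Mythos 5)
Your proposal is correct and takes essentially the same route as the paper, which simply cites \cite{baudoin_differential_2018}[Lemma 5.3] for the Gaussian upper bound in (i) and \cite{roth_spectre_1984} for (ii) and (iii) without reproving them. Your additional sketches (Dirichlet-form construction of the kernel, path-sum/unfolding argument for the Gaussian bound on $(0,T]$, and conservativeness via $P_t\one=\one$ using the Kirchhoff condition on the compact graph) are sound and consistent with how those cited results are established.
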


\begin{proof}
    A proof of the first part can be found in \cite{baudoin_differential_2018}[Lemma 5.3] and the last two statements have been shown in \cite{roth_spectre_1984}.
\end{proof}

\section{The heat equation as a gradient flow}\label{sec:Heat}

In this section, we study the heat equation on metric graphs and its two interpretations, namely the one as an $L^2$-gradient flow of the Cheeger energy and as a $W_2$-gradient flow of the logarithmic entropy. The latter has already been studied and made rigorous in \cite{Erbar_2022}, while the first one has been treated in general metric measure spaces in \cite{Ambrosio_2013}.
Our main goal is to show that both coincide on metric graphs, thus giving rise to the notion of heat flow. To this end, we show that the Cheeger energy is equivalent to the Dirichlet energy and that the abstract $L^2$-gradient flow induces the heat equation with standard coupling conditions from \eqref{eq:heat}. 

We begin by studying the Cheeger energy and its gradient flow.
Let $\Ch: L^2(\sfG)\to[0,\infty]$ with
\begin{align}\label{eq:Cheeger}
    \Ch(f) = \inf\set*{\liminf\limits_{n\to\infty} \int_\sfG \lip{f_n}\bra{x}^2 \dx{\leb} \setsep f_n\in {\rm Lip}(\sfG), ~f_n \to f \text{ in } L^2(\sfG)}
\end{align}
denote the \emph{Cheeger energy} and let the \emph{Dirichlet energy} $\calE: W^{1,2}(\sfG) \to [0,\infty)$ be given by
\begin{align*}
    \calE(f) = \int_\sfG \abs{\nabla f}^2\dx{x}.
\end{align*}
As was shown in \cite{Ambrosio_2013}[Theorems 6.2, 6.3] and \cite{Ambrosio_2013}[Remark 4.7], the Cheeger energy admits the integral representation
\begin{align}\label{eq:IntCheeger}
    \Ch(f) = \frac{1}{2}\int_\sfG \weak{f}^2 \dx{\leb}
\end{align}
for any $f\in\Dom(\Ch)$, where $\weak{f}$ is the so-called \emph{minimal relaxed gradient}. It is defined as the unique minimal element $g\in L^2(\sfG)$ such that there exists a family $(f_n)_{n\in\N}\subset {\rm Lip}(\sfG)$ with $f_n\to f$ and $\lip{f_n} \to g$ in $L^2(\sfG)$. It has been shown, see \cite{ambgig11}[Theorem 3.1] and \cite{ambrosio_gradient_2008} for the theory of Hilbertian gradient flows, that $\Ch$ gives rise to an $L^2$-gradient flow. Let $(P_t)_{t\geq0}$ denote the corresponding semigroup with dual $\bra*{H_t}_{t\geq0}$, extending the semigroup from $L^2(\sfG)$ to $\ProbAc(\sfG)$ by 
\begin{align*}
    H_t \mu = \bra*{P_t f}\leb \quad \text{if} \quad \frac{\dx{\mu}}{\dx{\leb}} = f.
\end{align*}
As a first step towards the characterization of $(H_t)_{t\geq0}$, we show that Lipschitz functions are weakly differentiable $\leb$-a.e. with bounded derivatives. More precisely, we prove that ${\rm Lip}(\sfG) = W^{1,\infty}(\sfG)$.
\begin{theorem}\label{thm:Rademacher}
    It holds that $f \in W^{1,\infty}(\sfG)$ if and only if $f\in {\rm Lip}(\sfG)$.
\end{theorem}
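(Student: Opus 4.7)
The plan is to prove both inclusions separately, using the fact that $\sfG$ is a finite union of intervals glued at a finite set of vertices and that elements of $W^{1,p}(\sfG)$ admit a continuous representative (Mugnolo's Lemma 3.27, quoted in the preliminaries).

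For the implication $\mathrm{Lip}(\sfG)\subseteq W^{1,\infty}(\sfG)$, I would first note that $(\sfG,d)$ is compact, so every Lipschitz function is automatically in $L^\infty(\sfG)$. Then, fix any edge $\sfe\in\sfE$ and identify it with $[-\ell_\sfe/2,\ell_\sfe/2]$. The graph distance restricted to $\sfe$ is bounded above by the Euclidean distance on the interval, so the restriction $f^\sfe$ is Lipschitz on $\sfe$ with constant at most $\Lip{f}$. By the one-dimensional Rademacher theorem $f^\sfe$ is absolutely continuous on $\sfe$ and differentiable a.e., with $\abs{(f^\sfe)'}\leq \Lip{f}$ a.e. This produces a weak derivative on each edge bounded by $\Lip{f}$, so $f\in W^{1,\infty}(\sfG)$ with $\norm{\nabla f}_{L^\infty(\sfG)}\leq \Lip{f}$.

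The converse direction $W^{1,\infty}(\sfG)\subseteq \mathrm{Lip}(\sfG)$ is the substantive one. Given $f\in W^{1,\infty}(\sfG)$, replace it by its continuous representative. Take arbitrary $x,y\in\sfG$ and a path $\gamma$ of length $d(x,y)$ joining them; such a path exists because $(\sfG,d)$ is a geodesic space. This path decomposes into finitely many pieces $[a_i,b_i]\subset\sfe_i$ whose consecutive endpoints coincide either with vertices of $\sfG$ or with $x,y$. On each piece, the restriction of $f$ lies in $W^{1,\infty}(\sfe_i)$, so by the fundamental theorem of calculus for one-dimensional Sobolev functions,
\begin{align*}
\abs{f(b_i)-f(a_i)} \leq \int_{a_i}^{b_i}\abs{\nabla f}\dx{t} \leq \norm{\nabla f}_{L^\infty(\sfG)}\,(b_i-a_i).
\end{align*}
Because the continuous representative agrees across vertices, summing and telescoping yields
\begin{align*}
\abs{f(x)-f(y)} \leq \sum_i \abs{f(b_i)-f(a_i)} \leq \norm{\nabla f}_{L^\infty(\sfG)}\sum_i (b_i-a_i) = \norm{\nabla f}_{L^\infty(\sfG)}\,d(x,y),
\end{align*}
which shows $f\in \mathrm{Lip}(\sfG)$ with $\Lip{f}\leq \norm{\nabla f}_{L^\infty(\sfG)}$.

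The only delicate point is ensuring that the chain of elementary one-dimensional estimates closes up consistently at the vertices, which is precisely what the continuous representative guarantees; the rest is standard. As a by-product the argument shows that the two norms coincide, $\Lip{f}=\norm{\nabla f}_{L^\infty(\sfG)}$, which will be useful when identifying the Cheeger energy with the Dirichlet energy in Theorem \ref{thm:IdCheeger}.
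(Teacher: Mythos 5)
Your proof is correct, and for one of the two implications it takes a genuinely different route from the paper. For $W^{1,\infty}(\sfG)\subseteq\mathrm{Lip}(\sfG)$ your argument is essentially the paper's: pass to the continuous representative, control $f$ along a geodesic edge by edge via the fundamental theorem of calculus, and telescope through the vertices. The only cosmetic difference is that the paper first mollifies $f^\sfe$ on each edge before applying the fundamental theorem of calculus and then passes to the limit, whereas you invoke the absolutely continuous representative of a one-dimensional Sobolev function directly; your uniform treatment of all pairs $x,y$ via path decomposition is in fact slightly cleaner than the paper's case split into ``same edge'' versus ``different edges.'' For the converse inclusion $\mathrm{Lip}(\sfG)\subseteq W^{1,\infty}(\sfG)$ you argue differently: you observe that $d(x,y)\leq\abs{x-y}$ for $x,y$ on a common edge, so each restriction $f^\sfe$ is classically Lipschitz on an interval, and then you apply the one-dimensional Rademacher theorem (equivalently, absolute continuity of Lipschitz functions on intervals) to obtain an a.e.\ derivative that serves as the weak derivative and is bounded by $\Lip{f}$. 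The paper instead runs a difference-quotient argument: it bounds the quotients in $L^2(\sfe)$, extracts a weakly convergent subsequence, identifies the weak limit as $\nabla f^\sfe$, and then upgrades to the $L^\infty$ bound via the auxiliary sets $A_\eps$. Your route is shorter and more elementary, at the cost of importing the classical one-dimensional Rademacher/absolute-continuity theorem as a black box; the paper's route is self-contained modulo weak compactness in $L^2$. Both yield the same two-sided norm identity $\Lip{f}=\norm{\nabla f}_{L^\infty(\sfG)}$ that is used later in Lemma \ref{lem:MetrPropGraphs}.
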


\begin{proof}
    First, let $f\in W^{1,\infty}(\sfG)$. Given \cite{mugnolo_semigroup_2014}[Lemma 3.27], there exists a continuous representative, which we again denote by $f\in C(\sfG)$ and it remains to show its Lipschitz-continuity. 
    Let $x,y\in\sfG$ be arbitrary points on the graph and suppose that there exists an edge $\sfe\in\sfE$ such that $x,y\in\sfe$. We denote by $f^\sfe$ the restriction of $f$ to $\sfe$, continuously extended by a constant value to a function on the real line. In particular $f^\sfe\in L^1_{\rm loc}(\R)$ by construction. For $n\in\N$ we define 
    \begin{align*}
        f^\sfe_n(x) \coloneqq \int_\R \zeta_n(x-y) f^\sfe(y) \dx{y},
    \end{align*}
    where $\zeta_n:\R\to\R$ is a standard mollifier such that $f_n^\sfe\to f^\sfe$ uniformly as $n\to\infty$ and $\norm*{\nabla f_n^\sfe}_{L^\infty(\R)} \leq \norm{\nabla f^\sfe}_{L^\infty(\R)}$. Choosing the parametrization of $\sfe$ such that $x\leq y$, we have
    \begin{align*}
        \abs*{f^\sfe_n(y) - f^\sfe_n(x)} &= \abs*{\int_x^y \nabla f^\sfe_n(z) \dx{z}} \leq \norm*{\nabla f^\sfe_n}_{L^\infty([x,y])}\abs{y-x} \leq \norm*{\nabla f^\sfe}_{L^\infty(\sfe)}d(x,y)
    \end{align*}
    and for $n\to\infty$ by uniform convergence 
    \begin{align}\label{eq:LipEdge}
        \abs*{f(y) - f(x)} \leq \norm*{\nabla f}_{L^\infty(\sfe)} d(x,y) \leq \norm*{\nabla f}_{L^\infty(\sfG)} d(x,y). 
    \end{align}
    Now, suppose that $x\in \sfe$ and $y\in \sfe'$ for $\sfe\neq \sfe'$. Since $(\sfG,d)$ is a geodesic metric space, we can find a shortest path connecting $x$ to $y$. We denote by $\bra*{v_i}_{i=0,\ldots,N}\subset \sfV$ the ordered sequence of vertices lying on that path, where $v_0 \coloneqq x$ and $v_N \coloneqq y$. Then 
    \begin{align*}
        \abs*{f(y) - f(x)} &\leq \sum\limits_{i=1}^N \abs*{f(v_i) - f(v_{i-1})} \leq \norm*{\nabla f}_{L^\infty(\sfG)} \sum\limits_{i=1}^N d(v_i,v_{i-1}) = \norm*{\nabla f}_{L^\infty(\sfG)} d(x,y),
    \end{align*}
    using \eqref{eq:LipEdge} in the second inequality. This shows $f\in {\rm Lip}(\sfG)$ with $\Lip{f} \leq \norm*{\nabla f}_{L^\infty(\sfG)}$.

    For the other implication, let $f\in {\rm Lip}(\sfG)$ be given. By definition $f\in C(\sfG)$, so that it remains to show weak differentiability on each edge. Fix $\sfe\in\sfE$ and let $f^\sfe:\R\to\R$ as before. For $\abs{\delta}\in (0,1)$ and $x\in \sfe$ we have that
    \begin{align*}
        \frac{f^\sfe(x) - f^\sfe(x-\delta)}{\delta} \leq \lip{f^\sfe}(x), 
    \end{align*}
    implying 
    \begin{align*}
        \norm*{\frac{f^\sfe(x) - f^\sfe(x-\delta)}{\delta}}_{L^\infty(\sfe)} \leq \Lip{f^\sfe}\quad\text{and}\quad\norm*{\frac{f^\sfe(x) - f^\sfe(x-\delta)}{\delta}}_{L^2(\sfe)} \leq \sqrt{\ell_\sfe}\Lip{f^\sfe}
    \end{align*}
    by Hölder's inequality. Therefore, there exists a subsequence $\delta_k\to 0$ as $k\to\infty$ such that $\frac{f^\sfe(\cdot)-f^\sfe(\cdot-\delta_k)}{\delta_k}$ converges weakly to a function $g\in L^2(\sfe)$. For all $\varphi\in C_c^\infty(\sfe)$ we obtain
    \begin{align*}
        \int_\sfe \nabla \varphi(x) f^\sfe(x) \dx{x} &= \lim\limits_{k\to \infty} \int_\sfe \frac{\varphi(x+\delta_k) - \varphi(x)}{\delta_k} f^\sfe(x) \dx{x}\\
        &= - \lim\limits_{k\to\infty} \int_\sfe \varphi(x) \frac{f^\sfe(x) - f^\sfe(x-\delta_k)}{\delta_k} \dx{x}\\
        &= -  \int_\sfe \varphi(x) g(x) \dx{x},
    \end{align*}
    showing that $f$ is weakly differentiable with weak derivative $g = \nabla f^\sfe$. Since $\sfe\in\sfE$ was arbitrary, we have shown $f\in W^{1,2}(\sfG)$.
    Now, let $\eps>0$ and fix $\sfe\in\sfE$. We define the set 
    \begin{align*}
        A_\eps \coloneqq \set*{x\in \sfe\setsep \abs{\nabla f(x)} \geq \Lip{f} + \eps}
    \end{align*}
    and weak convergence implies
    \begin{align*}
        \leb(A_\eps)\bra*{\Lip{f} + \eps} \leq \int_{A_\eps} \abs{\nabla f^\sfe(x)} \dx{x} \leq \liminf\limits_{k\to \infty} \int_{A_\eps} \abs*{\frac{f^\sfe(x) - f^\sfe(x-\delta_k)}{\delta_k}} \dx{x} \leq \leb(A_\eps)\Lip{f}.
    \end{align*}
    Therefore, $\leb(A_\eps)=0$ for all $\eps>0$ and we infer that $\norm*{\nabla f^\sfe}_{L^\infty(\sfe)} \leq \Lip{f^\sfe}$. Since the choice $\sfe\in\sfE$ was arbitrary, this proves $f\in W^{1,\infty}(\sfG)$. 
\end{proof} 

Recalling the definition of the Cheeger energy from \eqref{eq:Cheeger}, it is necessary to construct approximations by Lipschitz functions when characterizing elements in its domain.
To this end, we make use of the ideas presented in \cite{Erbar_2022}.
We begin by extending the metric graph, introducing one auxiliary edge for each vertex. 
Let $\sfe = (\sfv,\sfw)\in\sfE$ be an arbitrary edge of $\sfG$, parametrized in such a way that $\sfe = \pra*{-\frac{\ell_\sfe}{2}, \frac{\ell_\sfe}{2}}$. The new edges are identified with $\sfe_\sfv^{2\eps} = \pra*{-\frac{\ell_\sfe}{2} - 2\eps, -\frac{\ell_\sfe}{2}}$ and $\sfe_\sfw^{2\eps} = \pra*{\frac{\ell_\sfe}{2}, \frac{\ell_\sfe}{2} + 2\eps}$, see Figure \ref{fig:ExtGraph} for an illustration. From this extension, we obtain a new vertex of degree one denoted by $\sfv^{2\eps}$ for each $\sfv\in\sfV$. Note that we only add one new edge for each vertex, however, depending on the incident edge $\sfe\in\sfE(\sfv)$, we parametrize it differently.
The extended metric graph is denoted by $\sfG^{2\eps} = (\sfV^{2\eps}, \sfE^{2\eps}, \ell^{2\eps})$ with domains defined in analogy to \eqref{eq:DomGraph}. We introduce the following regularization, see \cite{Erbar_2022}[Definitions 3.8, 3.11].

\begin{figure}
    \centering
    \begin{tikzpicture}[scale=.6]
        \filldraw (0,0) circle (2pt);
        \node[below] at (0,0) {$\sfv$};
        \filldraw (4,0) circle (2pt);
        \node[below] at (4,0) {$\sfw$};
        \draw[thick] (0,0) -- node[below]{$\sfe$} (4,0);
        \draw (0,0) -- (-1.5,0) node[below]{$\sfe_\sfv^{2\eps}$};
        \draw (4,0) -- (5.5,0) node[below]{$\sfe_\sfw^{2\eps}$};
        \draw[gray] (0,0) -- (-1,-1.5);
        \draw[gray] (0,0) -- (-1,1.5);
        \draw[gray] (4,0) -- (5,1.5);
        \draw[gray] (4,0) -- (5,-1.5);
    \end{tikzpicture}
    \caption{Example of an extension of a metric graph for $\eps>0$}
    \label{fig:ExtGraph}
\end{figure}
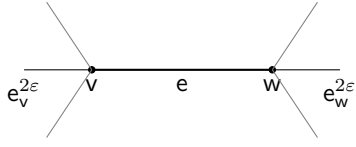

\begin{definition}\label{def:Regular}
    Let $\eps>0$ and $\varphi\in L^1(\sfG^{2\eps})$ be given. We define the function $\varphi^\eps:\sfG\to\R$ by 
    \begin{align*}
        \varphi^\eps (x) = \frac{1}{2\eps} \int_{\alpha^\eps_\sfe x - \eps}^{\alpha^\eps_\sfe x + \eps} \varphi(y) \dx{y}
    \end{align*}
    for $x\in \sfe$ and $\alpha^\eps_\sfe = \frac{\ell_\sfe + 2 \eps}{\ell_\sfe}$. For $\mu\in\Meas(\sfG)$ we define its regularization by duality as the measure $\mu^\eps\in\Meas(\sfG^{2\eps})$ such that for all $\varphi\in C(\sfG^{2\eps})$
    \begin{align*}
        \int_{\sfG^{2\eps}} \varphi \dx{\mu^\eps} = \int_\sfG \varphi^\eps \dx{\mu}.
    \end{align*}
\end{definition}

In general, we will deal with functions that are defined on $\sfG$ instead of the extended graph $\sfG^{2\eps}$ for some $\eps>0$. We use the following conventions. If $\varphi\in L^1(\sfG)$, we extend it by $\varphi\restrto{e^{2\eps}_\sfv} \equiv 0$ for all $\sfv\in \sfV$, leading to a function in $L^1(\sfG^{2\eps})$. On the other hand, if $\varphi\in C(\sfG)$, we extend it continuously by $\varphi\restrto{e^{2\eps}_\sfv} \equiv \varphi(\sfv)$ for $\sfv\in\sfV$.
This regularization satisfies several useful properties. 

\begin{proposition}\label{prop:PropertiesReg}
    Let $\eps>0$, $\varphi\in C(\sfG^{2\eps})$ and $\mu\in\Meas(\sfG)$ be given. The following hold true.
    \begin{enumerate}[label=\roman*)]
        \item For $\varphi^{\eps}:\sfG\to\R$ we have that $\varphi^{\eps}\in C^1(\sfG)$ with derivative given by
        \begin{align*}
            \nabla \varphi^{\eps}(x) = \frac{\alpha^\eps_\sfe}{2\eps}\bra*{\varphi(\alpha^\eps_\sfe x-\eps) - \varphi(\alpha^\eps_\sfe x+\eps)}
        \end{align*}
        for all $x\in \sfe$ and $\varphi^{\eps} \to \varphi$ uniformly in $\sfG$ as $\eps\to 0$. \label{RegCont}
        \item The regularization is mass preserving in the sense that $\mu^\eps(\sfG^{2\eps}) = \mu(\sfG)$. \label{MassCons}
        \item If $\mu\in\Prob(\sfG)$, then its regularization $\mu^\eps\in\Prob(\sfG^{2\eps})$ is absolutely continuous with respect to the Lebesgue measure and has the density $ f^\eps\in L^1(\sfG^{2\eps})$ given by
        \begin{align*}
             f^\eps(x) = \begin{cases}
                \frac{1}{2\eps} \mu(e\cap I_\sfe^\eps(x)) &: x\in \sfE\\ \frac{1}{2\eps}\bra*{\one_{\set{d(x,\sfv)\leq 2\eps}} \mu(\set{\sfv}) + \sum\limits_{\sfe\in\sfE(\sfv)}\mu(e\cap I_\sfe^\eps(x))} &: x\in\sfe^{2\eps}_\sfv, \, \sfv\in\sfV
            \end{cases}
        \end{align*}
        for 
        \begin{align*}
            I_e^\eps(x) = \bra*{\max\set*{\frac{x-\eps}{\alpha_\sfe^\eps}, -\frac{\ell_\sfe}{2}}, \min\set*{\frac{x+\eps}{\alpha_\sfe^\eps}, \frac{\ell_\sfe}{2}} }.
        \end{align*}
        Moreover it holds that $ f^\eps \leq \frac{1}{2\eps}$ on $\sfG^{2\eps}$. \label{density}
        \item As $\eps\to 0$ it holds that $\mu^\eps \weakcvg \mu$ in $\Prob(\sfG^{2\eps})$ for $\mu\in\Prob(\sfG)$. \label{cvg}
        \item If $s\mapsto\mu_s \in \AC^2\bra*{[0,1]; \Prob(\sfG)}$ then $s\mapsto \mu_s^\eps \in \AC^2\bra*{[0,1]; \Prob(\sfG^{2\eps})}$ and 
        \begin{align*}
            \abs*{\Dot{\mu}^\eps_s} \leq \bra*{1+\frac{2\eps}{\ell_{\min}}} \abs*{\Dot{\mu}_s}    
        \end{align*}
        for a.e. $s\in[0,1]$ with $\ell_{\min} \coloneqq \min\set*{\ell_\sfe \setsep \sfe\in\sfE}$. As a consequence
        \begin{align*}
            \limsup\limits_{\eps>0} \abs*{\Dot{\mu}^\eps_s} \leq \abs*{\Dot{\mu}_s}
        \end{align*}
        for a.e. $s\in[0,1]$. \label{ActionEst}
    \end{enumerate}
\end{proposition}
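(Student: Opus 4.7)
The plan is to establish the five properties in sequence, with (iii) and (v) being the main computations. Property (i) follows from the fundamental theorem of calculus applied on each edge: the integrand is continuous and the map $x \mapsto \alpha_\sfe^\eps x \pm \eps$ is affine, yielding the claimed derivative formula on the interior of $\sfe$. Continuity of $\varphi^\eps$ across a vertex $\sfv$ holds because as $x \to \sfv$ from any incident edge $\sfe$, the averaging interval collapses to the whole auxiliary edge $\sfe_\sfv^{2\eps}$, and by the identification that defines $\sfG^{2\eps}$ the integral $\frac{1}{2\eps}\int_{\sfe_\sfv^{2\eps}} \varphi$ does not depend on which incident edge we approach from. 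Uniform convergence $\varphi^\eps \to \varphi$ on $\sfG$ as $\eps \to 0$ follows from the uniform continuity of $\varphi$ on the compact extended graph together with the bounds $\abs{\alpha_\sfe^\eps x - x} \le \eps$ and averaging radius $\eps$. Property (ii) reduces to testing with $\varphi \equiv 1$, which gives $\varphi^\eps \equiv 1$ and hence $\mu^\eps(\sfG^{2\eps}) = \mu(\sfG)$.

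For (iii), I unfold the definition of $\mu^\eps$ and invoke Fubini. For $\varphi \in C(\sfG^{2\eps})$,
\begin{align*}
    \int_{\sfG^{2\eps}} \varphi \dx{\mu^\eps} = \int_\sfG \varphi^\eps \dx{\mu} = \sum_{\sfe \in \sfE} \int_\sfe \frac{1}{2\eps} \int_{\alpha_\sfe^\eps x - \eps}^{\alpha_\sfe^\eps x + \eps} \varphi(y) \dx{y} \dx{\mu}(x).
\end{align*}
Exchanging the order of integration and rewriting the constraint $\alpha_\sfe^\eps x - \eps \le y \le \alpha_\sfe^\eps x + \eps$ as $x \in I_\sfe^\eps(y)$ (with clipping to the endpoints of $\sfe$) reads off the density: if $y$ lies in the interior of an edge, only the corresponding term survives; if $y \in \sfe_\sfv^{2\eps}$, every edge $\sfe \in \sfE(\sfv)$ contributes $\mu(\sfe \cap I_\sfe^\eps(y))$, plus the atom $\mu(\set{\sfv})$ whenever $d(y,\sfv) \le 2\eps$. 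The bound $f^\eps \le 1/(2\eps)$ is then immediate since the parenthesis is at most $\mu(\sfG) = 1$. Property (iv) follows at once: extending $\varphi \in C(\sfG)$ continuously to $\sfG^{2\eps}$ by the prescribed vertex convention, one has $\int \varphi \dx{\mu^\eps} = \int \varphi^\eps \dx{\mu}$, and uniform convergence $\varphi^\eps \to \varphi$ with $\mu$ finite gives the convergence of integrals.

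The main obstacle is (v), for which I would realize the regularization as a pushforward. For each $x \in \sfe$, the measure of density $\tfrac{1}{2\eps}\one_{[\alpha_\sfe^\eps x - \eps, \alpha_\sfe^\eps x + \eps]}$ equals $T(x,\cdot)_\#\lambda_\eps$, where $\lambda_\eps$ is the uniform law on $[-\eps,\eps]$ and $T(x,u) = \alpha_\sfe^\eps x + u$ is interpreted on $\sfG^{2\eps}$, possibly crossing into adjacent auxiliary edges. By linearity in $\mu$ this identifies $\mu^\eps = T_\#(\mu \otimes \lambda_\eps)$. The map $T$ is Lipschitz on $\sfG \times [-\eps,\eps]$ with constant $1 + \tfrac{2\eps}{\ell_{\min}}$: on the interior of each edge the stretch factor is $\alpha_\sfe^\eps = 1 + \tfrac{2\eps}{\ell_\sfe} \le 1 + \tfrac{2\eps}{\ell_{\min}}$, while for points on different edges I would split along a geodesic through the shared vertex and reduce to the single-edge estimate using the identification at vertices. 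Since $W_2(\mu_s \otimes \lambda_\eps, \mu_{s'} \otimes \lambda_\eps) \le W_2(\mu_s, \mu_{s'})$ (couple by the identity on the $\lambda_\eps$ factor) and pushforward by an $L$-Lipschitz map scales $W_2$ by at most $L$, one obtains
\begin{align*}
    W_2(\mu_s^\eps, \mu_{s'}^\eps) \le \bra*{1 + \tfrac{2\eps}{\ell_{\min}}} W_2(\mu_s, \mu_{s'}).
\end{align*}
This yields the absolute continuity of $s \mapsto \mu_s^\eps$ together with the metric-derivative bound, and the $\limsup$ statement is immediate upon letting $\eps \to 0$. The delicate part is the Lipschitz estimate for $T$ globally across vertices; I expect to handle this by a triangle inequality along edges, exploiting that the auxiliary edges are shared and the parametrizations glue consistently through the equivalence relation defining $\sfG^{2\eps}$.
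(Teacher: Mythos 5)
Parts \emph{i)}--\emph{iv)} of your argument are correct and are essentially the elementary computations one would expect; the paper itself does not reprove them but cites \cite{Erbar_2022}. One small remark on \emph{i)}: the fundamental theorem of calculus gives $\nabla\varphi^\eps(x)=\frac{\alpha_\sfe^\eps}{2\eps}\bra*{\varphi(\alpha_\sfe^\eps x+\eps)-\varphi(\alpha_\sfe^\eps x-\eps)}$, so the formula in the statement has the two terms in the opposite order; you assert that FTC ``yields the claimed derivative formula'', thereby reproducing a sign typo rather than deriving the formula.

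The genuine gap is in \emph{v)}, at exactly the point you flag as delicate. The map $T(x,u)=\alpha_\sfe^\eps x+u$ is not well defined, let alone Lipschitz, across vertices, because the single auxiliary edge at a vertex $\sfv$ is parametrized as $[-\tfrac{\ell_\sfe}{2}-2\eps,-\tfrac{\ell_\sfe}{2}]$ when seen from an incident edge having $\sfv$ at its left endpoint, and as $[\tfrac{\ell_{\sfe'}}{2},\tfrac{\ell_{\sfe'}}{2}+2\eps]$ when seen from an incident edge having $\sfv$ at its right endpoint; a general graph cannot be oriented so that every vertex has the same sign for all its incident edges (a triangle already fails). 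Computing $T(\sfv,u)$ through the first edge gives the point of $\sfe_\sfv^{2\eps}$ at distance $\eps-u$ from $\sfv$, through the second the point at distance $\eps+u$, so $T(\cdot,u)$ jumps by $2\abs{u}$ as $x$ crosses such a vertex. Concretely, if $x\in\sfe$ and $y\in\sfe'$ lie at distance $\delta$ from $\sfv$ on two edges with mismatched orientation, then $d(x,y)=2\delta\to0$ while $\int_{-\eps}^{\eps} d^2\bra*{T(x,u),T(y,u)}\,\frac{\dx{u}}{2\eps}\to\tfrac{4}{3}\eps^2$, even though $W_2\bra*{T(x,\cdot)_\#\lambda_\eps,\,T(y,\cdot)_\#\lambda_\eps}\to0$. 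Hence the coupling ``identity on the $\lambda_\eps$ factor'' is suboptimal by an amount that does not vanish with $d(x,y)$, and the estimate $W_2(\mu_s^\eps,\mu_{s'}^\eps)\leq\bra*{1+\tfrac{2\eps}{\ell_{\min}}}W_2(\mu_s,\mu_{s'})$ does not follow from the plan you construct. The argument can be repaired, but only by coupling $u\leftrightarrow\sigma(x,y)\,u$ with a sign $\sigma(x,y)\in\set{\pm1}$ recording the relative orientation of the two parametrizations along the geodesic from $x$ to $y$, and then redoing the distance estimate edge by edge through each traversed vertex; this is precisely the bookkeeping your proposal leaves open. The route taken in the literature the paper relies on is different: \emph{v)} is deduced from the characterization of $\AC^2$ curves as weak solutions of the continuity equation (\cite{Erbar_2022}[Theorem 3.7]), i.e.\ by regularizing the velocity field and estimating its action in the Benamou--Brenier sense, which sidesteps the orientation issue entirely.
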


\begin{proof}
    The first result \ref{RegCont} is the content of \cite{Erbar_2022}[Proposition 3.9].
    The statements \ref{MassCons}, \ref{density} and \ref{cvg} are shown in \cite{Erbar_2022}[Proposition 3.12], where the convergence in \ref{cvg} has to be understood in the sense that 
    \begin{align*}
        \abs*{\int_{\sfG^{2\eps}} \varphi \dx{\mu^\eps} - \int_\sfG \varphi \dx{\mu}} = \abs*{\int_{\sfG} \varphi^{\eps} - \varphi \dx{\mu}} \to 0
    \end{align*}
    as $\eps\to 0$ for all $\varphi\in C(\sfG^{2\eps})$. For \ref{MassCons} note that the regularization of the constant function equal to one on the graph $\sfG^{2\eps}$ is again the constant function equal to one on the initial graph $\sfG$.
    The last statement follows from the same proposition together with \cite{Erbar_2022}[Theorem 3.7] which characterizes absolutely continuous curves as weak solutions to the continuity equation on metric graphs. 
\end{proof}

As a consequence of the last property, the regularization introduced in Definition \ref{def:Regular} is compatible with metric derivatives in the sense that it preserves absolute continuity and its metric derivative is asymptotically bounded by the original speed. Moreover, it provides sufficient regularity to construct Lipschitz approximations.

\begin{theorem}\label{thm:ApproxByLip}
    For every function $f\in W^{1,2}(\sfG)$ there exists a sequence of functions $f_n\in {\rm Lip}(\sfG)$, $n\in\N$, such that $f_n\to f$ uniformly on $\sfG$ as $n\to\infty$ with $\lim_{n\to\infty}\int_\sfG \lip{f_n}^2\dx{x} \leq \norm*{\nabla f}_{L^2(\sfG)}^2$.
\end{theorem}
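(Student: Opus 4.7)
The plan is to construct the Lipschitz approximation by applying the regularization from Definition~\ref{def:Regular} to a suitable continuous extension of $f$. Since every $f\in W^{1,2}(\sfG)$ admits a continuous representative on $\sfG$, I would first extend $f$ to $\tilde f\in C(\sfG^{2\eps})$ by setting $\tilde f\equiv f(\sfv)$ on each auxiliary edge $\sfe_\sfv^{2\eps}$, and then define $f_\eps\coloneqq \tilde f^{\eps}$ via Definition~\ref{def:Regular}. By Proposition~\ref{prop:PropertiesReg}\ref{RegCont}, $f_\eps\in C^1(\sfG)$ with bounded edge-derivative, and $f_\eps\to f$ uniformly on $\sfG$ as $\eps\to 0$. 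Theorem~\ref{thm:Rademacher} then gives $f_\eps\in W^{1,\infty}(\sfG)={\rm Lip}(\sfG)$, settling the uniform convergence half of the statement.

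The heart of the argument is the energy bound. The crucial observation is that, because the extension is constant on each $\sfe_\sfv^{2\eps}$, the function $\tilde f$ is absolutely continuous along every extended edge, with weak derivative equal to $\nabla f$ on $\sfG$ and vanishing on the extensions. Using the explicit derivative formula from Proposition~\ref{prop:PropertiesReg}\ref{RegCont}, I would rewrite
\begin{align*}
    \nabla f_\eps(x) = -\frac{\alpha_\sfe^\eps}{2\eps}\int_{\alpha_\sfe^\eps x-\eps}^{\alpha_\sfe^\eps x+\eps}\nabla\tilde f(t)\dx{t},\qquad x\in\sfe,
\end{align*}
square, apply Cauchy--Schwarz to the inner integral, then perform the change of variables $u=\alpha_\sfe^\eps x$ and use Fubini. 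Because $\nabla\tilde f$ vanishes off $\sfG$, this yields the edge-wise bound
\begin{align*}
    \int_\sfe |\nabla f_\eps|^2\dx{x}\leq \alpha_\sfe^\eps\int_\sfe |\nabla f|^2\dx{x},
\end{align*}
and summing over $\sfe\in\sfE$ gives $\int_\sfG |\nabla f_\eps|^2\dx{x}\leq (\max_\sfe \alpha_\sfe^\eps)\int_\sfG |\nabla f|^2\dx{x}$. Since $f_\eps\in C^1(\sfG)$ and the vertex set is $\leb$-null, $\lip{f_\eps}=|\nabla f_\eps|$ $\leb$-a.e. Choosing any sequence $\eps_n\to 0$, one has $\max_\sfe \alpha_\sfe^{\eps_n}=1+2\eps_n/\ell_{\min}\to 1$, so setting $f_n\coloneqq f_{\eps_n}$ yields both the uniform convergence and $\limsup_{n\to\infty}\int_\sfG\lip{f_n}^2\dx{x}\leq\|\nabla f\|_{L^2(\sfG)}^2$.

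The main subtlety, and the only place where care is genuinely needed, is at the vertices: the fundamental theorem of calculus representation of $\nabla f_\eps$ above must be valid across a vertex lying inside the integration window $[\alpha_\sfe^\eps x-\eps,\alpha_\sfe^\eps x+\eps]$. This is precisely why the constant extension $\tilde f\equiv f(\sfv)$ on $\sfe_\sfv^{2\eps}$ is chosen: it preserves continuity of $\tilde f$ at $\sfv$ and forces $\nabla\tilde f=0$ on the auxiliary edge, so that no boundary contributions appear and the double integral reduces cleanly, after Fubini, to $2\eps\int_\sfe|\nabla f|^2\dx{t}$. Once this observation is in place, the remainder is a routine mollifier-type estimate.
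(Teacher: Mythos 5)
Your proposal is correct and follows essentially the same route as the paper: both use the continuous representative of $f$, the averaging operator from Definition~\ref{def:Regular} together with Proposition~\ref{prop:PropertiesReg}~\ref{RegCont} and Theorem~\ref{thm:Rademacher} to produce the Lipschitz approximants, and both reduce the energy bound to controlling the $L^2$-norm of the centred difference quotients $\alpha^\eps_\sfe\,\bigl(f(\alpha^\eps_\sfe x-\eps)-f(\alpha^\eps_\sfe x+\eps)\bigr)/(2\eps)$. The only difference is how that last step is closed: the paper cites the difference-quotient characterization of Sobolev functions (\cite{brezis}[Proposition 9.3]), while you prove the same bound by hand via the fundamental theorem of calculus, Cauchy--Schwarz and Fubini --- a self-contained variant that also makes explicit the vertex-crossing issue (weak differentiability of the constantly extended $\tilde f$ across $\sfv$) which the paper's appeal to the difference-quotient lemma implicitly requires as well.
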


\begin{proof}
    Let $f\in W^{1,2}(\sfG)$ be given. In particular, $f\in C(\sfG)$ by \cite{mugnolo_semigroup_2014}[Lemma 3.27] and, for each $\eps>0$, we find a function $ f^\eps\in C^1(\sfG)$ using Definition \ref{def:Regular} such that $ f^\eps \to f$ uniformly as $\eps\to0$ and $ f^\eps\in {\rm Lip}(\sfG)$ by Theorem \ref{thm:Rademacher}.
    For $x\in\sfG\setminus{\sfV}$ and any sequence $\bra*{y_n}_{n\in\N}\subset\sfG$ such that $d(x,y_n)\to 0$, we have $y_n\in\sfe$ for $n\in\N$ sufficiently large. Then, by the fundamental theorem of calculus and Proposition \ref{prop:PropertiesReg} \ref{RegCont} it holds that
    \begin{align*}
        \abs*{\frac{ f^\eps(x) -  f^\eps(y_n)}{d(x,y_n)}} &= \abs*{\frac{1}{\abs*{x-y_n}}\int_x^{y_n} \nabla  f^\eps(z) \dx{z}} 
        \leq \frac{\alpha^\eps_\sfe}{2\eps} \frac{1}{\abs*{x-y_n}}  \int_x^{y_n} \abs*{f\bra*{\alpha^\eps_\sfe z - \eps} - f\bra*{\alpha^\eps_\sfe z + \eps} } \dx{z} .
    \end{align*}
    This gives
    \begin{align*}
        \lip{ f^\eps}(x) \leq \alpha^\eps_\sfe \abs*{\frac{f(\alpha^\eps_\sfe x - \eps) - f(\alpha^\eps_\sfe x + \eps)}{2\eps}}
    \end{align*}
    since $f$ is continuous and every point is a Lebesgue point. Taking the squared $L^2(\sfe)$-norm of this estimate, we infer that
    \begin{align*}
        \int_\sfe \lip{ f^\eps}(x)^2 \dx{x} &\leq \bra*{\alpha^\eps_\sfe}^2 \int_\sfe \abs*{\frac{f(\alpha^\eps_\sfe x - \eps) - f(\alpha^\eps_\sfe x + \eps)}{2\eps}}^2 \dx{x}.
    \end{align*}
    By weak differentiability, we can apply \cite{brezis}[Proposition 9.3] and obtain the limit
    \begin{align*}
        \lim\limits_{\eps\to 0 }  \int_\sfe \lip{ f^\eps}(x)^2 \dx{x} &\leq \int_\sfe~ \abs*{\nabla f(x)}^2 \dx{x}.
    \end{align*}
    Summing over all edges and defining $f_n \coloneqq f^{\frac{1}{n}}$ concludes the proof.
\end{proof}

Having established the approximation result, we can identify Cheeger and Dirichlet energy.

\begin{theorem}\label{thm:IdCheeger}
    On a metric graph it holds that $\Dom(\Ch) = W^{1,2}(\sfG)$, $\abs*{\nabla f} = \weak{f}$ $\leb$-a.e. and
    \begin{align*}
        \calE(f) = 2\Ch(f).
    \end{align*}
\end{theorem}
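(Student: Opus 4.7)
The plan is to prove the two inclusions $W^{1,2}(\sfG)\subseteq\Dom(\Ch)$ and $\Dom(\Ch)\subseteq W^{1,2}(\sfG)$ together with matching norm inequalities, and then upgrade the resulting equality of $L^2$-norms to a pointwise identity of the two gradients with the help of an auxiliary a.e.\ comparison coming out of the first inclusion.

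For $W^{1,2}(\sfG)\subseteq\Dom(\Ch)$ with $2\Ch(f)\leq\calE(f)$, I would take $f\in W^{1,2}(\sfG)$ and apply Theorem \ref{thm:ApproxByLip}: the Lipschitz approximations $f_n=f^{1/n}$ converge uniformly (hence, as $\sfG$ is compact, in $L^2$) to $f$ with $\limsup_n\int_\sfG\lip{f_n}^2\dx{x}\leq\int_\sfG|\nabla f|^2\dx{x}$, and plugging this sequence into \eqref{eq:Cheeger} immediately delivers the bound and the membership $f\in\Dom(\Ch)$. Inspecting the proof of Theorem \ref{thm:ApproxByLip}, one moreover has the pointwise majorant $\lip{f_n}(x)\leq \alpha_\sfe^{1/n}|\Delta_{1/n}f(x)|$, whose right-hand side converges strongly in $L^2(\sfG)$ to $|\nabla f|$ by the classical $L^2$-convergence of difference quotients of $W^{1,2}$ functions (e.g.\ \cite{brezis}[Proposition 9.3]). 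Consequently $|\nabla f|$ is a relaxed slope of $f$ (after a Mazur-type convex combination if needed in order to match the strong-$L^2$ formulation used in \eqref{eq:IntCheeger}), and minimality of $|Df|_w$ then forces the a.e.\ bound $|Df|_w\leq|\nabla f|$.

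For the reverse inclusion and $\calE(f)\leq 2\Ch(f)$, I would pick $f\in\Dom(\Ch)$ and, via the integral representation \eqref{eq:IntCheeger}, select Lipschitz $f_n\to f$ in $L^2$ with $\lip{f_n}\to|Df|_w$ in $L^2$. By Theorem \ref{thm:Rademacher} every $f_n$ lies in $W^{1,\infty}(\sfG)\subseteq W^{1,2}(\sfG)$, and a one-dimensional Lebesgue-point argument on each edge yields the pointwise domination $|\nabla f_n(x)|\leq\lip{f_n}(x)$ for $\leb$-a.e.\ $x$. Hence $(\nabla f_n)_n$ is bounded in $L^2(\sfG)$; extract a weakly convergent subsequence $\nabla f_n\weakcvg h$ and identify $h$ with the weak derivative of $f$ by testing against $\varphi\in C_c^\infty(\sfe)$ edge by edge. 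Continuity of $f$ at the vertices is automatic because the continuous $f_n$ converge uniformly, so $f\in W^{1,2}(\sfG)$. Weak lower semicontinuity combined with the pointwise comparison then gives $\calE(f)=\|\nabla f\|_{L^2}^2\leq\liminf_n\|\lip{f_n}\|_{L^2}^2=2\Ch(f)$. Combining both steps, $\Dom(\Ch)=W^{1,2}(\sfG)$ and $\int|\nabla f|^2\dx{x}=\int|Df|_w^2\dx{x}$; paired with the a.e.\ inequality $|Df|_w\leq|\nabla f|$ of nonnegative functions, equal $L^2$-norms force the pointwise identity $|Df|_w=|\nabla f|$ $\leb$-a.e., whence $\calE=2\Ch$.

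The main obstacle I foresee is the reverse inclusion: the abstract minimality of $|Df|_w$ gives no direct edgewise information about weak differentiability, and so one has to bridge it to the Sobolev setting through the pointwise bound $|\nabla f_n|\leq\lip{f_n}$ and a careful identification of the weak $L^2$-limit as the weak derivative on the whole graph, the vertex coupling being preserved only because uniform convergence of continuous functions is. The pointwise a.e.\ relation $|Df|_w\leq|\nabla f|$ is the other delicate point and is the reason for tracking the explicit majorant in the proof of Theorem \ref{thm:ApproxByLip} rather than only the integral bound.
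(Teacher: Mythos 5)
Your proposal follows essentially the same route as the paper's proof: Theorem \ref{thm:ApproxByLip} for the inclusion $W^{1,2}(\sfG)\subseteq\Dom(\Ch)$, and the representation \eqref{eq:IntCheeger} together with Theorem \ref{thm:Rademacher}, the pointwise bound $\abs{\nabla f_n}\leq\lip{f_n}$ and weak $L^2$-compactness of the derivatives for the converse; your extra bookkeeping of the explicit difference-quotient majorant to obtain $\weak{f}\leq\abs{\nabla f}$ $\leb$-a.e.\ makes the final pointwise identification somewhat more explicit than the paper's brief appeal to minimality of the relaxed gradient. The only loose end is the assertion that the $f_n$ obtained from \eqref{eq:IntCheeger} converge uniformly (the definition only provides $L^2$-convergence), but this is harmless: a subsequence converges uniformly on each edge by the bound \eqref{eq:BndWeakDeriv} and the one-dimensional embedding $W^{1,2}\hookrightarrow C$, and vertex continuity is in any case not required by the paper's edgewise definition of $W^{1,2}(\sfG)$.
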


\begin{proof}
    We split the proof into two parts, showing the inclusions separately. 
    
    First, we prove that for $f\in W^{1,2}(\sfG)$ we have $f\in \Dom(\Ch)$ with $2\Ch(f) \leq \norm*{\nabla f}^2_{L^2(\sfG)}$.
    Let $f\in W^{1,2}(\sfG)$ be given. As shown in Theorem \ref{thm:ApproxByLip}, there exists a sequence $f_n\in {\rm Lip}(\sfG)$ such that $f_n\to f$ in $C(\sfG)$ with 
        \begin{align*}
           \lim\limits_{n\to\infty} \int_\sfG \lip{f_n}(x)^2 \dx{x} \leq \norm*{\nabla f}_{L^2(\sfG)}^2.
        \end{align*}
        This directly implies
        \begin{align*}
            2\Ch(f) \leq \liminf\limits_{n\to\infty} \int_\sfG \lip{f_n}(x)^2 \dx{x} \leq  \norm*{\nabla f}_{L^2(\sfG)}^2 < \infty,
        \end{align*}
        so that $f\in\Dom(\Ch)$ and the bound holds true.

        Next, we show that $f\in\Dom(\Ch)$ implies $f\in W^{1,2}(\sfG)$ with $\norm*{\nabla f}^2_{L^2(\sfG)} \leq 2\Ch(f)$.
        Using the equivalent characterization of the Cheeger energy given in \eqref{eq:IntCheeger}, we know that for any $\eps>0$ there exists a family of functions $\bra*{f_n}_{n\in\N}\in {\rm Lip}(\sfG)$ such that $f_n\to f$ and $\lip{f_n} \to \weak{f}$ in $L^2(\sfG)$ with 
        \begin{align*}
            \lim\limits_{n\to\infty} \norm*{\lip{f_n}}_{L^2(\sfG)}^2 \leq 2\Ch(f) + \eps.
        \end{align*}
        In particular, for $n\in\N$ large enough
        \begin{align*}
            \norm*{\lip{f_n}}_{L^2(\sfG)}^2 \leq 2\Ch(f) + 2\eps.
        \end{align*}
        On each edge $\sfe = [-\frac{\ell_\sfe}{2}, \frac{\ell_\sfe}{2}]\in\sfE$ we extend every function $f_n\in {\rm Lip}(\sfG)$ by the constant value attained in the vertices, leading to a Lipschitz continuous extension. For $\abs{\delta}\in(0, 1)$ arbitrary we then have that 
        \begin{align*}
            \int_{-\frac{\ell_\sfe}{2} }^{\frac{\ell_\sfe}{2}} \abs*{\frac{f_n(x+\delta) - f_n(x)}{\delta}}^2 \dx{x} \leq  \int_{-\frac{\ell_\sfe}{2}}^{\frac{\ell_\sfe}{2}} \lip{f_n}^2(x) \dx{x}.
        \end{align*}
        Observing that the right-hand side does not depend on $\abs{\delta}\in(0, 1)$, we can apply \cite{evans_partial_2010}[Chapter 5.8, Theorem 3] yielding weak differentiability on each edge and after summation over all edges this gives
        \begin{align}\label{eq:BndWeakDeriv}
            \norm*{\nabla f_n}^2_{L^2\bra*{\sfG}} \leq \sum\limits_{\sfe\in\sfE}\int_{-\frac{\ell_\sfe}{2}}^{\frac{\ell_\sfe}{2}} \lip{f_n}^2(x) \dx{x} \leq 2\Ch(f) + 2\eps.
        \end{align}
        Therefore, there exists a weakly convergent subsequence such that $\nabla f_{n_k} \weakcvg g$ in $L^2(\sfL)$ as $k\to\infty$. 
        However, for $\varphi\in C^1(\sfG)$ with $\varphi(\sfv)=0$ for all $\sfv\in\sfV$ we have that
        \begin{align*}
            \int_\sfG \nabla \varphi f \dx{x} \longleftarrow \int_\sfG \nabla \varphi f_{n_k} \dx{x} &= - \int_\sfG \varphi \nabla f_{n_k} \dx{x} \longrightarrow \int_\sfG \varphi g \dx{x}
        \end{align*}
        as $k\to\infty$, implying $g = \nabla f \in L^2(\sfL)$. This gives $\norm*{\nabla f}^2_{L^2(\sfG)} \leq 2\Ch(f) + 2\eps$ and, since $\eps>0$ was arbitrary, we have shown that $f\in W^{1,2}(\sfG)$ with 
        \begin{align*}
            \int_\sfG \abs*{\nabla f(x)}^2 \dx{x} \leq 2\Ch(f) = \int_\sfG \weak{f}^2(x) \dx{x}. 
        \end{align*}
    Due to uniqueness of the minimal relaxed gradient, we also obtain $\abs*{\nabla f} = \weak{f}$ $\leb$-a.e. in $\sfG$. 
\end{proof}

Employing Theorem \ref{thm:IdCheeger}, we are now in a position to characterize the $L^2$-gradient flow of $\Ch$ as the heat equation \eqref{eq:heat}.

\begin{lemma}
    On a metric graph the $L^2(\sfG)$-gradient flow of the Cheeger energy exists and is given by the heat equation \eqref{eq:heat}. The corresponding semigroup can be represented by the heat kernel from Lemma \ref{lem:heat_kernel}.
\end{lemma}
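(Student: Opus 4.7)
The plan is to invoke the classical Hilbertian theory of gradient flows for convex lower semicontinuous functionals, apply it to $\Ch$, and then identify the resulting generator with the Laplacian from \eqref{eq:heat}. Existence is the easy part: by Theorem \ref{thm:IdCheeger}, on its domain $\Dom(\Ch) = W^{1,2}(\sfG)$ the Cheeger energy coincides with the quadratic Dirichlet form $\frac{1}{2}\int_\sfG |\nabla f|^2 \dx{\leb}$, hence $\Ch$ is convex. Its lower semicontinuity on $L^2(\sfG)$ is built into the definition \eqref{eq:Cheeger}, and its domain is dense (Lipschitz functions are dense in $L^2(\sfG)$). The Brezis–Komura theorem then yields a unique strongly continuous semigroup $(P_t)_{t\geq 0}$ of contractions on $L^2(\sfG)$ whose trajectories $t\mapsto P_t f_0$ are the maximal-slope curves of $\Ch$, i.e.\ solve $\dot f_t \in -\partial\Ch(f_t)$.

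Next I would identify the subdifferential. Since $\Ch$ is a quadratic form with associated bilinear form $\frac{1}{2}\int_\sfG \nabla f\cdot\nabla g\,\dx{\leb}$, an element $v \in L^2(\sfG)$ lies in $\partial\Ch(f)$ if and only if $f\in W^{1,2}(\sfG)$ and
\begin{align*}
\int_\sfG v\,\varphi \dx{\leb} \;=\; \int_\sfG \nabla f\cdot\nabla\varphi \dx{\leb} \qquad \text{for every } \varphi\in W^{1,2}(\sfG).
\end{align*}
Testing first with $\varphi\in C_c^\infty(\sfe)$ supported in the interior of a single edge shows $-\partial_{xx} f^\sfe = v^\sfe$ weakly on each edge, so $f^\sfe\in W^{2,2}(\sfe)$ and the first line of \eqref{eq:heat} holds. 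Allowing test functions that are non-zero at vertices — recall that every element of $W^{1,2}(\sfG)$ admits a continuous representative by \cite{mugnolo_semigroup_2014}[Lemma 3.27], so the third line of \eqref{eq:heat} is automatic — integration by parts on each edge produces a boundary term
\begin{align*}
\sum_{\sfv\in\sfV} \varphi(\sfv) \sum_{\sfe\in\sfE(\sfv)} \partial_x f^\sfe(\sfv)\,\normal^\sfe(\sfv),
\end{align*}
which must vanish for every admissible $\varphi$; since the vertex values $\varphi(\sfv)$ can be prescribed independently, the inner Kirchhoff sum is forced to be zero at each vertex, yielding the second line of \eqref{eq:heat}. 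Hence $\partial\Ch(f)$ is single-valued with minimal section $-\Delta f$, where $\Delta$ is the Laplacian with standard coupling conditions, and the gradient-flow equation becomes precisely \eqref{eq:heat}.

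For the heat-kernel representation, the kernel $p_t$ supplied by Lemma \ref{lem:heat_kernel} defines via
\begin{align*}
\tilde P_t f_0(x) \;=\; \int_\sfG p_t(x,y)\, f_0(y) \dx{\leb(y)}
\end{align*}
a strongly continuous contraction semigroup on $L^2(\sfG)$ whose trajectories solve \eqref{eq:heat} with initial datum $f_0\in L^2(\sfG)$, as follows from the regularity and mass-preservation properties in Lemma \ref{lem:heat_kernel}. By the uniqueness part of Brezis–Komura, $\tilde P_t$ and $P_t$ must coincide, which proves the last claim. The main obstacle in this plan is the variational derivation of the Kirchhoff condition from the subdifferential identity: it hinges on carefully exploiting the automatic continuity of $W^{1,2}(\sfG)$ functions at vertices together with the freedom to prescribe arbitrary vertex values of the test function $\varphi$, which is the one step where the graph structure (as opposed to a single interval) genuinely intervenes.
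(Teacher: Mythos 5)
Your proof is correct and follows essentially the same route as the paper, which simply invokes the abstract gradient-flow existence theorem of Ambrosio--Gigli--Savar\'e together with the identification $2\Ch = \calE$ from Theorem \ref{thm:IdCheeger}. The only difference is one of detail: you make explicit the computation of $\partial\Ch$ and the variational derivation of the Kirchhoff condition, which the paper leaves to the cited references.
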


\begin{proof}
    The existence follows from \cite{ambgig11}[Theorem 3.1], whereas the representation results from Theorem \ref{thm:IdCheeger}.
\end{proof}

As another consequence of Theorem \ref{thm:IdCheeger}, the $\Gamma$-operator corresponding to the Cheeger energy admits the form
\begin{align*}
    \Gamma(f,g) = \nabla f \cdot \nabla g
\end{align*}
for $f,g\in W^{1,2}(\sfG)$ and we set $\Gamma(f) = \weak{f}^2 = \abs*{\nabla f}^2 $ for $f\in \Dom(\Ch) = W^{1,2}(\sfG)$.
As was shown in \cite{baudoin_differential_2018}[Theorems 5.4, 5.5], this operator satisfies a generalized Bakry-Émery estimate in the following sense. 
Let $\deg_{\max} = \max\set*{\deg(\sfv)\setsep\sfv\in\sfV}$. There exist constants $C\geq \deg_{\max}-1$ and $K>0$ such that for all $f\in W^{1,2}(\sfG)$ it holds that
\begin{align*}
    \sqrt{\Gamma(P_tf)} \leq Ce^{-Kt} P_t\sqrt{\Gamma(f)} ,
\end{align*}
which has to be understood as
\begin{align*}
    \sqrt{\abs*{\nabla \int_\sfG p_t(x,y) f(y) \dx{y}}^2} \leq Ce^{-Kt} \int_\sfG p_t(x,y) \sqrt{\abs*{\nabla f(y)}^2}\dx{y}
\end{align*}
for $\leb$-a.e. $x\in\sfG$ and $t\geq 0$. Further, it follows from the proof of this result that $K=\lambda_1$, where $\lambda_1>0$ is the smallest non-zero eigenvalue of the Laplacian with standard boundary conditions.
Squaring the above equation and using Jensen's inequality together with Lemma \ref{lem:heat_kernel} gives
\begin{align*}
     \Gamma(P_tf) = \abs*{\int_\sfG \nabla  p_t(\cdot,y) f(y) \dx{y}}^2 \leq C^2 e^{-2Kt} \int_\sfG p_t(\cdot,y) \abs*{\nabla f(y)}^2 \dx{y} = C^2 e^{-2Kt} P_t\Gamma(f).
\end{align*}
This proves the following. 

\begin{theorem}\label{thm:Bew_Graph}
    Let $\sfG=(\sfV,\sfE,\ell)$ be a metric graph. There exist constants $C\geq\deg_{\max}-1$ and $K>0$ such that for all $f\in W^{1,2}(\sfG)$ and $t\geq 0$ we have 
    \begin{align*}
         \Gamma(P_tf) \leq c^2(t) P_t\Gamma(f),
    \end{align*}
    where $c(t) = Ce^{-Kt}$. We can choose $K=\lambda_1$ for $\lambda_1>0$ the smallest non-zero eigenvalue of the Laplacian with standard boundary conditions. 
\end{theorem}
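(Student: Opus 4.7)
The plan is to deduce the squared gradient estimate from the square-root version that is already available in the literature, using the identification of the Cheeger form with the Dirichlet form from the previous subsection. First I would invoke Theorem \ref{thm:IdCheeger} to identify $\Dom(\Ch) = W^{1,2}(\sfG)$ together with $\Gamma(f) = |\nabla f|^2$ $\leb$-a.e., so that the semigroup $(P_t)_{t \geq 0}$ generated by the Cheeger energy coincides with the heat semigroup represented by the kernel $p_t$ from Lemma \ref{lem:heat_kernel}. This identification is essential, since the cited gradient bound in \cite{baudoin_differential_2018} is phrased in terms of the intrinsic heat semigroup on the metric graph, whereas the weak Bakry--\'Emery condition of interest is formulated in terms of the carr\'e du champ attached to $\Ch$.

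Next, I would quote \cite{baudoin_differential_2018}[Theorems 5.4, 5.5], which yields constants $C \geq \deg_{\max} - 1$ and $K = \lambda_1 > 0$ such that
\begin{align*}
    \sqrt{\Gamma(P_t f)}(x) \leq C e^{-Kt} \int_\sfG p_t(x,y) \sqrt{\Gamma(f)(y)} \dx{y}
\end{align*}
for $\leb$-a.e.\ $x \in \sfG$ and every $f \in W^{1,2}(\sfG)$. Since by Lemma \ref{lem:heat_kernel} the map $y \mapsto p_t(x,y)$ is a probability density on $\sfG$, the key step is to square both sides and apply Jensen's inequality to the convex function $u \mapsto u^2$, which produces
\begin{align*}
    \Gamma(P_t f)(x) \leq C^2 e^{-2Kt} \bra*{\int_\sfG p_t(x,y) \sqrt{\Gamma(f)(y)} \dx{y}}^2 \leq C^2 e^{-2Kt} \int_\sfG p_t(x,y) \Gamma(f)(y) \dx{y}.
\end{align*}
The right-hand side equals $c^2(t) P_t \Gamma(f)(x)$ with $c(t) = C e^{-Kt}$, giving the claim.

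The main obstacle is purely one of bookkeeping rather than substance: one has to check that the gradient bound quoted from \cite{baudoin_differential_2018} applies on the full domain $W^{1,2}(\sfG)$ rather than only on a dense smooth subclass, and that $\Gamma(P_t f)$ makes sense $\leb$-a.e. on $\sfG$. Both points are handled by the regularizing properties of the kernel in Lemma \ref{lem:heat_kernel}, namely the Gaussian upper bound in (i) combined with the edgewise $C^1$-regularity of $p_t(x, \cdot)$ in (ii), which together allow one to pass the pointwise estimate from smooth approximants to general $f \in W^{1,2}(\sfG)$ by density. Once this verification is in place, the theorem is obtained from the single Jensen step above.
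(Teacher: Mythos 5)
Your proposal is correct and follows essentially the same route as the paper: identify $\Gamma$ with $|\nabla \cdot|^2$ via Theorem \ref{thm:IdCheeger}, quote the square-root gradient estimate from \cite{baudoin_differential_2018}[Theorems 5.4, 5.5] with $K=\lambda_1$, then square and apply Jensen's inequality using that $p_t(x,\cdot)$ is a probability density by Lemma \ref{lem:heat_kernel}. The paper performs exactly this single Jensen step, so no further comparison is needed.
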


Having identified the Cheeger energy and its $L^2$-gradient flow, we now aim at connecting it to the $W_2$-flow of the entropy.
Let $\Ent:\Prob(\sfG)\to\R\cup\set{\infty}$, defined as 
\begin{align*}
    \Ent(\mu) = \begin{cases}
        \int_{\sfG} \eta(f) \dx{x} &: f = \frac{\dx{\mu}}{\dx{\leb}}\\ \infty &:\text{else}
    \end{cases} \quad \text{with} \quad \eta(r) = r \log r ,
\end{align*}
be the \emph{logarithmic entropy}. Note that, compared to \eqref{eq:Entr_refmeas}, we omit the explicit dependence on the reference measure and implicitly choose $\rmm = \leb$ to shorten the notation.
We denote by $\calI:\Prob(\sfG) \to [0,\infty]$ the \emph{Fisher Information}, given as the functional
\begin{align}\label{eq:FI}
    \calI(\mu) \coloneqq \begin{cases}\int_X \psi( f,\nabla f) \dx{x} &: \frac{\dx{\mu}}{\dx{\leb}} =  f \in W^{1,1}(\sfG)\cap C(\sfG)\\ \infty &:\text{else}
    \end{cases},
\end{align}
where the integrand is defined as
\begin{align*}
    \psi(u,v) = \begin{cases}
        \frac{\abs*{v}^2}{u} &: 0\neq 0\\ 0 &: u=0, \, v=0\\ \infty &: u=0, \, v\neq 0
    \end{cases}.
\end{align*}
As was shown in \cite{Erbar_2022}[Theorem 5.5], the heat equation \eqref{eq:heat} can be understood as the $W_2$-gradient flow of $\Ent$ in the following sense.

\begin{definition}\label{def:gf_entr}
    Let $T>0$. We say that $t\mapsto \mu_t \in \AC^2\bra*{[0,T];\Prob(\sfG)}$ is a \emph{$W_2$-gradient flow curve} of the logarithmic entropy if $\Ent(\mu_0)<\infty$ and
    \begin{align*}
        \calD(\mu_t) \coloneqq \Ent(\mu_T) - \Ent(\mu_0) + \frac{1}{2} \int_0^T \abs*{\Dot{\mu}_t}^2 + \calI(\mu_t) \dx{t} = 0.
    \end{align*}
\end{definition}

Since gradient flow curves of $\Ch$ are weak solutions of \eqref{eq:heat}, we conclude that both flows coincide when choosing $L^2(\sfG)$ initial data.

\begin{lemma}\label{lem:identify_gfs}
    Let $f\in L^2(\sfG)$ and $T>0$ be given. Then, $\mu_t = (P_tf)\leb$ defines a $W_2$-gradient flow curve of the logarithmic entropy. On the other hand, if $\mu_t = f_t\leb$ for $t\in[0,T]$ is a $W_2$-gradient flow curve of the entropy starting from $f\in L^2(\sfG)$, then $f_t= P_tf$ and it defines an $L^2(\sfG)$-gradient flow curve of the Cheeger energy.
\end{lemma}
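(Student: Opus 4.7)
The plan is to combine Theorem~\ref{thm:IdCheeger}, the preceding identification of the $L^2$-gradient flow of $\Ch$ with the heat equation~\eqref{eq:heat}, and the correspondence between solutions of \eqref{eq:heat} and $W_2$-gradient flow curves of $\Ent$ established in \cite{Erbar_2022}[Theorem 5.5].

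For the forward direction, given $f\in L^2(\sfG)$ (tacitly a probability density, since $\mu_t$ is a probability measure), compactness of $\sfG$ yields $f\log f\in L^1(\sfG)$, so $\Ent(\mu_0)<\infty$. Lemma~\ref{lem:heat_kernel}(iii) ensures that $P_tf$ is again a probability density, so $\mu_t = (P_tf)\leb \in \ProbAc(\sfG)$. Since $t\mapsto P_tf$ is the $L^2$-gradient flow curve of $\Ch$ by the preceding lemma, it is a weak solution of \eqref{eq:heat}; invoking \cite{Erbar_2022}[Theorem 5.5] then yields $\calD(\mu_t)=0$, i.e.\ $\mu_t$ is a $W_2$-gradient flow curve of $\Ent$.

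For the converse, let $\mu_t = f_t\leb$ be a $W_2$-gradient flow curve of $\Ent$ with initial datum $f\in L^2(\sfG)$. The other implication of \cite{Erbar_2022}[Theorem 5.5] shows that $f_t$ is the weak solution of \eqref{eq:heat} starting from $f$. Uniqueness of $L^2$-solutions to \eqref{eq:heat}, which follows from the contractivity of the $L^2$-semigroup generated by $\Ch$ (identified with the Dirichlet form in Theorem~\ref{thm:IdCheeger}), then forces $f_t = P_tf$. By the same identification, $t\mapsto f_t = P_tf$ is the $L^2$-gradient flow curve of $\Ch$ with initial datum $f$.

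The argument is essentially bookkeeping: the only delicate point is checking that both implications of \cite{Erbar_2022}[Theorem 5.5] apply for $L^2$ initial data and that uniqueness of weak solutions of \eqref{eq:heat} in $L^2(\sfG)$ is available. Both issues are addressed by Theorem~\ref{thm:IdCheeger} together with the standard semigroup theory for the Cheeger/Dirichlet form.
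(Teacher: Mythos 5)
Your forward direction is the same as the paper's: $(P_tf)\leb$ solves the continuity equation / heat equation weakly, and \cite{Erbar_2022}[Theorem 5.5] gives $\calD=0$. The converse is where you diverge. You route through the implication ``$W_2$-gradient flow curve $\Rightarrow$ weak solution of \eqref{eq:heat}'' and then invoke $L^2$-uniqueness of solutions to the heat equation via contractivity of the semigroup. The paper instead stays entirely at the level of the functional $\calD$: it observes that both $f_t\leb$ and $(P_tf)\leb$ satisfy $\calD=0$, that $\calD$ is strictly convex (by strict convexity of $\Ent$, citing \cite{heinze_gradient_2024}[Lemma 3.11]), and that $\calD\geq 0$ on all curves; a strict convex combination of two distinct minimizers would then have $\calD<0$, a contradiction. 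This yields uniqueness of the gradient flow curve directly, without ever needing to know that an arbitrary EDE curve is a weak solution of the PDE.

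The one point you should check before relying on your route: the paper only uses \cite{Erbar_2022}[Theorem 5.5] for the two facts ``weak solutions satisfy $\calD=0$'' and ``$\calD\geq 0$ for all curves''; it does not cite, and its own proof deliberately avoids, the converse implication ``$\calD=0$ $\Rightarrow$ weak solution of \eqref{eq:heat}'' that your argument requires as its first step. If that converse is indeed part of the cited theorem, your proof is complete (the $L^2$-uniqueness step is standard and unproblematic, and the final identification with the $\Ch$-gradient flow follows from Theorem~\ref{thm:IdCheeger} as you say). If it is not, there is a gap, and the strict-convexity argument is the way to close it using only the two facts actually available. Either way, the paper's argument is the more economical one here, since it trades a PDE-uniqueness statement for a purely variational one.
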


\begin{proof}
    The curve $\mu_t = (P_tf)\leb$ solves the continuity equation weakly, see \cite{Erbar_2022}[Remark 2] and by \cite{Erbar_2022}[Theorem 5.5] it defines a gradient flow curve as in Definition \ref{def:gf_entr}. 
    Now, let $\mu_t = f_t\leb$ be a $W_2$-gradient flow curve of the logarithmic entropy starting from $f\in L^2(\sfG)$. From the first part of the proof we know that $(P_tf)\leb$ for $t\in[0,T]$ defines a gradient flow curve as well, implying that $\calD(\mu_t) = 0 = \calD\bra*{(P_tf)\leb}$. 
    Suppose that $f_t \neq P_tf$ and define $\Tilde{\mu}_t = \Tilde{f}_t\leb$ with $\Tilde{f_t} = \lambda f_t + (1-\lambda)P_tf$ for $\lambda\in(0,1)$. The functional $\calD$ from Definition \ref{def:gf_entr} is strictly convex due to the strict convexity of the logarithmic entropy, see for example \cite{heinze_gradient_2024}[Lemma 3.11]. Therefore, we obtain 
    \begin{align*}
        \calD(\Tilde{\mu}_t) < \lambda \calD(\mu_t) + (1-\lambda) \calD\bra*{(P_tf)\leb} = 0,
    \end{align*}
    which contradicts the non-negativity of the functional $\calD$ shown in \cite{Erbar_2022}[Theorem 5.5]. This gives $f_t= P_tf$.
\end{proof}

\begin{remark}
    Following the same arguments as in the previous proof and employing strict convexity of the functional $\calD$, gradient flow curves from Definition \ref{def:gf_entr} are unique for fixed initial data $\mu_0\in\Prob(\sfG)$ with $\Ent(\mu_0) < \infty$.
\end{remark}

Having established Lemma \ref{lem:identify_gfs}, we can unambiguously use the notation $(P_t)_{t\geq 0}$ and $(H_t)_{t\geq 0}$ for the heat semigroup and its dual associated to \eqref{eq:heat} representing weak solutions.

\section{Curvature bounds}\label{sec:CurvatureBnds}

In this section, we introduce the three equivalent notions of weak curvature bounds on metric graphs in the spirit of \cite{stefani_generalized_2022}[Theorem 5.16]. Some of our arguments rely on technical tools from the general theory of metric measure spaces. As a result of the identification from the previous section, we can verify the assumptions necessary for this abstract theory to apply, see \cite{stefani_generalized_2022}[Section 2]. 

\begin{lemma}\label{lem:MetrPropGraphs}
    The following hold true. 
    \begin{enumerate}
        \item[$(\rm A1)$] The space $(\sfG, d)$ is a complete and separable metric space. 
        \item[$(\rm A2)$] The Lebesgue measure $\leb\in \MeasPos(\sfG)$ is finite on bounded sets and supported on the whole space $\sfG$. 
        \item[$(\rm A3)$] For all $x\in\sfG$ and $r>0$ it holds that $\leb\bra*{B_r(x)} \leq \deg_{\max} e^{r^2}$.
        \item[$(\rm A4)$] We have that $\Ch(f+g) + \Ch(f-g) = 2\Ch(f) + 2\Ch(g)$ for all $f,g\in\Dom(\Ch)$.
        \item[$(\rm A5)$] If $f\in \Dom(\Ch)$ with $\weak{f}\leq L$ $\leb$-a.e. for some constant $L\geq 0$, then $f=g$ $\leb$-a.e. with $g\in {\rm Lip}(\sfG)$ and $\Lip{g}\leq L$.
    \end{enumerate}
    In addition, the graph distance coincides with
    \begin{align*}
        d_{\Ch}(x,y) \coloneqq \sup\set*{\abs*{f(x) - f(y)} \setsep f\in \Dom(\Ch)\cap C(\sfG), \, \Gamma(f)\leq 1 \; \leb\text{-a.e.}}.
    \end{align*}
\end{lemma}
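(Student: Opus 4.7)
The plan is to verify (A1)--(A5) and the $d_{\Ch}$ formula in turn, most of which are essentially direct consequences of the identification $2\Ch = \calE$ (Theorem \ref{thm:IdCheeger}) combined with the Rademacher-type equality $W^{1,\infty}(\sfG) = {\rm Lip}(\sfG)$ (Theorem \ref{thm:Rademacher}).

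First I would note that $\sfG$ is a finite union of compact intervals glued at the vertices, hence a compact metric space. This immediately gives (A1), as any compact metric space is complete and separable, and makes (A2) routine: the Lebesgue measure $\leb$ is the pushforward of the Lebesgue measure on $\sfL$ under $\sim$, so it is supported on all of $\sfG$ and $\leb(\sfG) = \sum_{\sfe\in\sfE} \ell_\sfe < \infty$. For (A3), I would observe that any ball satisfies the crude estimate $\leb(B_r(x)) \leq \min\{\leb(\sfG),\, 2\deg_{\max} r\}$: a neighborhood of $x$ of radius $r$ is explored along at most two directions on the edge containing $x$, branching into at most $\deg_{\max}$ edges at each vertex, and after traversing the whole graph nothing more is added. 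Either $r \leq 1$, in which case $\leb(B_r(x)) \leq 2\deg_{\max} r \leq \deg_{\max} e^{r^2}$, or $r>1$, in which case $\leb(B_r(x)) \leq \leb(\sfG) \leq \deg_{\max} e^{r^2}$ (up to enlarging the constant, which is harmless since $\deg_{\max}\geq 1$).

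For (A4), I would invoke Theorem \ref{thm:IdCheeger}, which gives $2\Ch(f) = \int_\sfG |\nabla f|^2 \dx{x}$; this is a squared Hilbert space seminorm on $W^{1,2}(\sfG)$ and therefore satisfies the parallelogram identity. For (A5), suppose $f\in\Dom(\Ch)$ with $\weak{f}\leq L$ $\leb$-a.e. Theorem \ref{thm:IdCheeger} identifies $\weak{f}$ with the weak derivative $|\nabla f|$, so $f \in W^{1,\infty}(\sfG)$ with $\norm{\nabla f}_{L^\infty(\sfG)} \leq L$; Theorem \ref{thm:Rademacher} then yields a representative $g\in {\rm Lip}(\sfG)$ with $\Lip{g}\leq \norm{\nabla f}_{L^\infty(\sfG)} \leq L$.

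Finally, for the coincidence $d = d_{\Ch}$, I would argue by two inequalities. For $d_{\Ch}(x,y) \leq d(x,y)$: if $f\in\Dom(\Ch)\cap C(\sfG)$ satisfies $\Gamma(f) = \weak{f}^2 \leq 1$ $\leb$-a.e., property (A5) (applied with $L=1$) produces a $1$-Lipschitz representative coinciding with $f$ $\leb$-a.e., and since both are continuous they agree pointwise, yielding $|f(x)-f(y)| \leq d(x,y)$. For the reverse inequality, I would use the distance test function $f_x(z) \coloneqq d(x,z)$, which is continuous and $1$-Lipschitz on $\sfG$; by Theorem \ref{thm:Rademacher} and Theorem \ref{thm:IdCheeger}, $f_x \in W^{1,2}(\sfG) = \Dom(\Ch)$ with $\Gamma(f_x) \leq 1$ $\leb$-a.e., so $d_{\Ch}(x,y) \geq |f_x(x) - f_x(y)| = d(x,y)$. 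No step is truly an obstacle: the only slight subtlety is ensuring, in (A5) and in the $d_{\Ch}$ computation, that the Lipschitz representative agrees with $f$ pointwise (not just a.e.), which is handled by continuity on the support of $\leb$, namely all of $\sfG$.
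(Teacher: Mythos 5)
Your proof is correct and follows essentially the same route as the paper: (A1)--(A3) are elementary, and (A4), (A5) are read off from Theorem \ref{thm:IdCheeger} and Theorem \ref{thm:Rademacher} exactly as in the paper (the explicit parallelogram computation the paper writes out is what you summon as the Hilbert-seminorm identity). The one place you genuinely add something is the identification $d = d_{\Ch}$: the paper simply cites \cite{baudoin_differential_2018} together with Theorem \ref{thm:IdCheeger}, whereas you give a self-contained two-inequality argument, using (A5) plus the full support of $\leb$ to upgrade a.e.\ agreement to pointwise agreement for $d_{\Ch}\le d$, and the test function $z\mapsto d(x,z)$ (which lies in ${\rm Lip}(\sfG)=W^{1,\infty}(\sfG)\subset\Dom(\Ch)$ with $\Gamma\le 1$ by Theorems \ref{thm:Rademacher} and \ref{thm:IdCheeger}) for $d_{\Ch}\ge d$. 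The only quibble is quantitative and shared with the paper: neither your intermediate bound $\leb(B_r(x))\le 2\deg_{\max} r$ nor the paper's $\leb(B_r(x))\le \deg_{\max} r$ is literally valid for intermediate radii once the ball has branched through several vertices, but since $\leb(B_r(x))\le\leb(\sfG)<\infty$ the Gaussian volume-growth condition in (A3) holds for a suitable constant, which, as you note, is all the abstract theory requires.
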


\begin{proof}
    The first property has been discussed in \cite{mugnolo_semigroup_2014}[Lemma 3.17] and the second one is a direct consequence of the properties of the one-dimensional Lebesgue measure. 
    The third statement follows from the bounds $\leb\bra*{B_r(x)} \leq \deg_{\max} r$ and $e^{r^2} \geq r$ for $r\geq 0$. 
    Using the representation provided in Theorem \ref{thm:IdCheeger}, we have that $f\pm g \in \Dom(\Ch) $ for $f,g\in \Dom(\Ch)$ and
    \begin{align*}
        \Ch(f+g) + \Ch(f-g) &= \frac{1}{2}\int_\sfG \abs*{\nabla(f+g)}^2 \dx{\leb} + \frac{1}{2}\int_\sfG \abs*{\nabla(f-g)}^2 \dx{\leb}\\
        &= \int_\sfG \frac{1}{2}\abs*{\nabla f}^2 + \frac{1}{2}\abs*{\nabla g}^2 + \abs*{\nabla f}\abs*{\nabla g} + \frac{1}{2}\abs*{\nabla f}^2 + \frac{1}{2}\abs*{\nabla g}^2 - \abs*{\nabla f}\abs*{\nabla g} \dx{\leb} \\
        &= 2\Ch(f) + 2\Ch(g).
    \end{align*}
    From Theorem \ref{thm:IdCheeger} we know that $\Dom(\Ch) = W^{1,2}(\sfG)$ and, if $f\in \Dom(\Ch)$ such that $\weak{f} = \abs*{\nabla f} \leq L$ $\leb$-a.e., then $f\in W^{1,\infty}(\sfG)$ by definition. Theorem \ref{thm:Rademacher} and its proof imply $f=g$ $\leb$-a.e. with $g\in {\rm Lip}(\sfG)$ and $\Lip{g}\leq L$, which verifies the last property.
    The identification of the distances follows from \cite{baudoin_differential_2018} and Theorem \ref{thm:IdCheeger}.
\end{proof}

Another important property of the heat flow is the heat-smoothing effect. In view of Theorem \ref{thm:IdCheeger} we obtain the following result.

\begin{lemma}\label{lem:HeatSmooth}
    The heat semigroup on a metric graph is \emph{heat-smoothing}, meaning that for all $f\in L^\infty(\sfG)\cap W^{1,2}(\sfG)$ and $t>0$ it holds that $\weak{P_t f} = \lip{P_tf}$ $\leb$-a.e. for all $t>0$.
\end{lemma}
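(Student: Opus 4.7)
The plan is to exploit the smoothing effect of the heat kernel: once $P_tf$ is shown to be continuously differentiable in the interior of each edge, the equality $\lip{P_tf}(x) = \weak{P_tf}(x)$ follows at every such point from the classical one-dimensional identification of the local Lipschitz constant with the modulus of the derivative, and the finite vertex set, being Lebesgue-null, can be discarded.

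First I would verify that $P_tf\in C^1(\sfL)$ for every $t>0$ and $f\in L^\infty(\sfG)\cap W^{1,2}(\sfG)$. Writing $P_tf(x) = \int_\sfG p_t(x,y) f(y)\,dy$ and using that $\partial_x p_t(\cdot,y)\in C(\sfL)$ by Lemma \ref{lem:heat_kernel}(ii), together with the Gaussian majorant $p_t(x,y)\leq C_{\rm up}(t) g_t(d(x,y))$ from Lemma \ref{lem:heat_kernel}(i) and the boundedness of $f$, one can differentiate under the integral to obtain a continuous derivative on the interior of each edge. Equivalently, one may split $P_tf = P_{t/2}(P_{t/2}f)$ and exploit the semigroup regularization applied to the bounded function $P_{t/2}f$. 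In either case, $P_tf$ is $C^1$ away from the vertices, and in particular lies in $W^{1,2}(\sfG)=\Dom(\Ch)$, so $\weak{P_tf}$ is well defined.

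Now fix any $x$ in the interior of an edge $\sfe$. Every $y$ sufficiently close to $x$ lies in $\sfe$, and on that edge the graph distance coincides with the Euclidean distance; the standard one-dimensional identity yields $\lip{P_tf}(x) = \abs{\nabla P_tf(x)}$. Since the vertex set $\sfV$ is finite and hence $\leb$-null in $\sfG$, this equality holds for $\leb$-a.e.\ $x\in\sfG$. Combined with Theorem \ref{thm:IdCheeger}, which asserts $\abs{\nabla P_tf} = \weak{P_tf}$ $\leb$-a.e., we conclude $\lip{P_tf} = \weak{P_tf}$ $\leb$-a.e. The only mildly delicate step is the interchange of differentiation and integration, but this is unproblematic thanks to the Gaussian kernel estimates in Lemma \ref{lem:heat_kernel}; the remainder is essentially one-dimensional calculus together with a null-set argument.
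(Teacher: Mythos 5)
Your proposal is correct and follows essentially the same route as the paper: establish edge-wise differentiability of $P_tf$, identify $\lip{P_tf}(x)$ with $\abs{\nabla P_tf(x)}$ at interior points of edges via one-dimensional calculus (the paper uses the mean value theorem), discard the finite vertex set as $\leb$-null, and invoke Theorem \ref{thm:IdCheeger} for $\abs{\nabla P_tf}=\weak{P_tf}$. The only difference is cosmetic: the paper imports edge-wise smoothness directly from \cite{fijavz_variational_2007}, whereas you differentiate under the integral — for which you need a dominating bound on $\partial_x p_t(\cdot,y)$ rather than the Gaussian bound on $p_t$ itself, but this is harmless here since $\sfG$ is compact and $\partial_x p_t$ is continuous by Lemma \ref{lem:heat_kernel}.
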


\begin{proof}
    As was shown in \cite{fijavz_variational_2007}[Theorem 3.6], $P_tf^\sfe \in C^\infty\bra*{\pra*{-\frac{\ell_\sfe}{2}, \frac{\ell_\sfe}{2}}}$ for each edge $\sfe\in\sfE$ and $P_tf \in C(\sfG)$ for all $t>0$. In particular, $P_tf\in W^{1,\infty}(\sfG)$ and by Theorem \ref{thm:Rademacher} we have that $P_tf \in {\rm Lip}(\sfG)$ with $\weak{P_tf} \leq \lip{P_tf}$ $\leb$-a.e. in $\sfG$.    
    Now, let $x\in \bra*{-\frac{\ell_\sfe}{2}, \frac{\ell_\sfe}{2}}$ be given with a sequence $(y_n)_{n\in\N}\subset\sfe$ such that $y_n\to x$ on some edge $\sfe\in\sfE$. For any $y_n\in\sfe$ there exists $z_n\in [x,y_n]$ with 
    \begin{align*}
        \frac{\abs*{P_tf^\sfe(x) - P_tf^\sfe(y_n)}}{\abs{x-y_n}} = \abs*{\nabla P_tf^\sfe(z_n)}
    \end{align*}
    by the mean value theorem. In particular, $z_n\to x$ as $y_n \to x$ and by smoothness of $P_tf$ we obtain 
    \begin{align*}
        \lim\limits_{n\to\infty}\frac{\abs*{P_tf^\sfe(x) - P_tf^\sfe(y_n)}}{\abs{x-y_n}} = \abs*{\nabla P_tf^\sfe(x)}.
    \end{align*}
    Since $y_n\to x$ was arbitrary and $\sfV\subset\sfG$ is of Lebesgue measure zero, this implies
    \begin{align*}
        \lip{P_t f}(x) \leq \abs*{\nabla P_tf(x)} = \weak{P_tf}(x) \quad \leb\text{-a.e.},
    \end{align*}
    where the last equality follows from Theorem \ref{thm:IdCheeger}.
\end{proof}

In order to formulate and prove our main result, we establish an equivalence between the weak gradient bound from Theorem \ref{thm:Bew_Graph} and a $W_2$-contraction estimate first. It is also called Kuwada duality and has been shown initially in \cite{kuwada_duality_2010} and stated for our setting in \cite{ambrosio_bakryemery_2015} with a detailed proof given in \cite{stefani_generalized_2022}. Thanks to Lemma  \ref{lem:MetrPropGraphs}, the result applies to metric graphs as well.

\begin{theorem}\label{thm:kuwada_dual}
    On a metric graph $\sfG=(\sfV,\sfE,\ell)$ let $c(t) = Ce^{-Kt}$ be as in Theorem \ref{thm:Bew_Graph}. Then, \eqref{eq:BEw} is equivalent to the existence of a dense subset $\calS\subset\ProbAc(\sfG)$ with $H_t(\calS)\subset\ProbAc(\sfG)$ such that for all $f\leb, g\leb \in \calS$ and $t\geq 0$ it holds that 
    \begin{align*}
        W_2\bra*{(P_tf)\leb, (P_tg)\leb} \leq c(t) W_2(f\leb, g\leb).
    \end{align*}
    Moreover, for all $t\geq0$ the dual heat semigroup $H_t$ extends uniquely to a map $H_t:\Prob(\sfG) \to \Prob(\sfG)$ such that for all $\mu,\nu\in\Prob(\sfG)$ we have
    \begin{align}\label{eq:contr_weak}\tag{${\rm K}_w$}
        W_2(H_t\mu, H_t\nu) \leq c(t) W_2(\mu,\nu).
    \end{align}
\end{theorem}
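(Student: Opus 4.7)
The plan is to invoke the abstract Kuwada-type duality theorem for metric measure spaces developed in \cite{stefani_generalized_2022}, after verifying its hypotheses in the metric graph setting. The preparatory work of Section \ref{sec:Heat} was done precisely to enable such a reduction: Lemma \ref{lem:MetrPropGraphs} records all the abstract structural assumptions (separability, finiteness-on-bounded-sets and mild volume growth of the reference measure, parallelogram law for $\Ch$, Sobolev-to-Lipschitz, agreement of the intrinsic and geodesic distances), Theorem \ref{thm:IdCheeger} identifies $\Dom(\Ch) = W^{1,2}(\sfG)$ with $\Gamma(f) = \abs*{\nabla f}^2$, and Lemma \ref{lem:HeatSmooth} provides the heat-smoothing property. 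My first step is therefore to match these ingredients to the hypotheses of the abstract duality theorem and apply it.

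For completeness I would unfold the two implications. For $\bew \Rightarrow$ \eqref{eq:contr_weak}, fix a dense subclass $\calS\subset\ProbAc(\sfG)$ of measures whose densities are bounded away from $0$ and $\infty$, and use Kantorovich duality. Given $f\leb, g\leb \in \calS$ and a $1$-Lipschitz Kantorovich potential $\varphi$, introduce its Hopf-Lax evolution $\bra*{Q_s\varphi}_{s\in[0,1]}$ and differentiate
\[
\Phi(s) \coloneqq \int_\sfG Q_s\varphi \cdot P_t f \dx{\leb} - \int_\sfG \varphi \cdot P_t g \dx{\leb},
\]
using the Hamilton-Jacobi inequality $\partial_s Q_s\varphi + \frac{1}{2} \lip{Q_s\varphi}^2 \leq 0$ together with the weak gradient bound $\Gamma(P_t Q_s\varphi) \leq c(t)^2 P_t \Gamma(Q_s\varphi)$. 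After rescaling $s$ by $c(t)^{-2}$, one obtains $\Phi(1) \leq \Phi(0)$, which through Kantorovich duality translates into $W_2\bra*{H_t(f\leb), H_t(g\leb)} \leq c(t) W_2\bra*{f\leb, g\leb}$. For the converse \eqref{eq:contr_weak} $\Rightarrow \bew$, I would follow the standard heat-kernel argument of \cite{kuwada_duality_2010}: the contraction estimate applied between nearby pushforwards, together with Lemma \ref{lem:HeatSmooth} to give pointwise meaning to $\abs*{\nabla P_tf}$ and Theorem \ref{thm:IdCheeger} to realize $\Gamma$ as the squared gradient, recovers $\abs*{\nabla P_tf}^2 \leq c(t)^2 P_t \abs*{\nabla f}^2$ $\leb$-almost everywhere.

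The extension of $H_t$ from $\calS$ to all of $\Prob(\sfG)$ is then soft: $\sfG$ is compact, hence $(\Prob(\sfG), W_2)$ is a bounded complete metric space and $\calS$ is $W_2$-dense; the $c(t)$-Lipschitz restriction $H_t\restrto{\calS}$ admits a unique continuous extension, and \eqref{eq:contr_weak} passes to the closure by continuity. I expect the main technical friction not in the abstract duality itself but in the Hopf-Lax step on the graph: the Hamilton-Jacobi inequality is formulated through the slope $\lip{Q_s\varphi}$, while the gradient bound is phrased through the minimal relaxed gradient $\weak{Q_s\varphi}$, so the whole argument hinges on the identification $\lip{Q_s\varphi} = \weak{Q_s\varphi}$ $\leb$-a.e. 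This is where property (A5) of Lemma \ref{lem:MetrPropGraphs} together with the heat-smoothing of Lemma \ref{lem:HeatSmooth} is indispensable, and the vertex behavior of $Q_s\varphi$ is the point most worth double-checking against the abstract reference.
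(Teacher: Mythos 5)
Your proposal is correct and takes essentially the same route as the paper: the paper's entire proof consists of invoking the abstract Kuwada duality theorem of \cite{stefani_generalized_2022} (their Theorem 3.16), noting that its hypotheses are met thanks to Lemma \ref{lem:MetrPropGraphs} and Theorem \ref{thm:Bew_Graph}. Your additional unfolding of the two implications (Hopf--Lax/Kantorovich duality one way, the heat-kernel argument of \cite{kuwada_duality_2010} the other) and of the density extension is not written out in the paper but is consistent with the cited abstract argument.
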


\begin{proof}
    The result follows as a special case of \cite{stefani_generalized_2022}[Theorem 3.16], applicable because of Lemma \ref{lem:MetrPropGraphs} and Theorem \ref{thm:Bew_Graph}.
\end{proof}

We are now in a position to state our main result.

\begin{theorem}\label{thm:equivalence}
    Let $\sfG=(\sfV,\sfE,\ell)$ be a metric graph and let 
    \begin{align*}
        R(t_0,t_1) = \int_0^1 c^{-2}\bra*{(1-s)t_0 + s t_1} \dx{s}
    \end{align*}
    for $c(t) = Ce^{-Kt}$ be as in Theorem \ref{thm:Bew_Graph}. Then, the following are equivalent.
    \begin{enumerate}[label=\roman*)]
        \item The \emph{weak Bakry-Émery estimate} holds, that is, for all $f\in W^{1,2}(\sfG)$ and $t\geq0$ we have that \label{be_equiv}  
        \begin{align}\label{eq:BEw}\tag{${\rm BE}_w$}
            \Gamma(P_tf)\leq c^2(t) P_t\Gamma(f).
        \end{align}
        \item The \emph{weak Evolutionary Variational Inequality}, $\EVIw$ for short, is satisfied. That is, if $\mu_0 \in \Dom(\Ent)$ and $\mu_1\in\Prob(\sfG)$, then for all $0\leq t_0\leq t_1$ with $\mu_1\in\Dom(\Ent)$ if $t_0=0=t_1$ it holds that \label{EVI_equiv}
        \begin{align}\label{eq:EVI_weak}\tag{${\rm EVI}_w$}
            \frac{1}{2}W_2^2(H_{t_1}\mu_1, H_{t_0}\mu_0) - \frac{1}{2R(t_0,t_1)} W_2^2(\mu_1,\mu_0) \leq (t_1-t_0)\bra*{\Ent(H_{t_0}\mu_0) - \Ent(H_{t_1}\mu_1)}.
        \end{align}
        \item The metric graph satisfies the \emph{weak Riemannian Curvature-Dimension Condition}, denoted by $\RCDw$, meaning that \eqref{eq:contr_weak} is satisfied and that for $s\mapsto \mu_s$ an arbitrary Wasserstein geodesic connecting $\mu_0,\mu_1\in\Dom(\Ent)$ we have \label{RCD_equiv}
        \begin{align}\label{eq:RCD_weak}\tag{${\rm RCD}_w$}
            \Ent(H_{t+h}\mu_s) \leq &(1-s)\Ent(H_t\mu_0) + s\Ent(H_t\mu_1)\\& + \frac{s(1-s)}{2h}\bra*{\frac{1}{R(t,t+h)} W_2^2(\mu_0,\mu_1) - W_2^2(H_t\mu_0, H_t\mu_1)} \notag
        \end{align}
        for all $s\in[0,1]$, $t\geq0$ and $h>0$.
    \end{enumerate}
\end{theorem}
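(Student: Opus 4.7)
The plan is to establish the cycle $\eqref{eq:BEw} \Rightarrow \eqref{eq:EVI_weak} \Rightarrow \eqref{eq:RCD_weak} \Rightarrow \eqref{eq:BEw}$, freely switching between \eqref{eq:BEw} and the contraction \eqref{eq:contr_weak} by Theorem~\ref{thm:kuwada_dual}. The closing implication $\eqref{eq:RCD_weak} \Rightarrow \eqref{eq:BEw}$ is then immediate, since \eqref{eq:contr_weak} is built into the definition of $\RCDw$ and is equivalent to \eqref{eq:BEw}.

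The core of the proof is $\eqref{eq:BEw} \Rightarrow \eqref{eq:EVI_weak}$. Given $\mu_0 \in \Dom(\Ent)$, $\mu_1 \in \Prob(\sfG)$ and $0 \leq t_0 \leq t_1$, I would fix a $W_2$-geodesic $s \mapsto \nu_s$ joining $\mu_0$ and $\mu_1$ and replace it by the regularized curve $s \mapsto \nu_s^\varepsilon$ from Definition~\ref{def:Regular}. By Proposition~\ref{prop:PropertiesReg}, $\nu_s^\varepsilon$ is absolutely continuous with density bounded by $1/(2\varepsilon)$, so its entropy is finite, and its $W_2$-speed is asymptotically bounded by $\abs*{\dot{\nu}_s}$ as $\varepsilon \to 0$. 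These are exactly the features captured by the notion of strongly regular curve from Theorem~\ref{thm:ExStrongReg}, which provides enough regularity to run a Lagrangian action-entropy computation along $r \mapsto H_{(1-r)t_0 + r t_1}\nu_s^\varepsilon$. Combining the entropy dissipation identity $\tfrac{\dx{}}{\dx{r}} \Ent(H_r \nu_s^\varepsilon) = -\calI(H_r \nu_s^\varepsilon)$ with the pointwise speed bound $\abs*{\dot{H_r \nu_s^\varepsilon}} \leq c(r) \abs*{\dot{\nu}_s^\varepsilon}$ inherited from \eqref{eq:contr_weak}, and applying Cauchy-Schwarz in $r$, should yield a regularized version of \eqref{eq:EVI_weak} in which $\int_0^1 \abs*{\dot{\nu}_s^\varepsilon}^2 \dx{s}$ replaces $W_2^2(\mu_0, \mu_1)$ and the endpoints are regularized. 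Letting $\varepsilon \to 0$, I would then use the weak convergence $\nu_s^\varepsilon \weakcvg \nu_s$ from Proposition~\ref{prop:PropertiesReg}(iv), the asymptotic action bound from Proposition~\ref{prop:PropertiesReg}(v), and the lower semicontinuity of $W_2$ and $\Ent$ to pass to \eqref{eq:EVI_weak}.

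For $\eqref{eq:EVI_weak} \Rightarrow \eqref{eq:RCD_weak}$, choosing $t_0 = t_1 = t$ in \eqref{eq:EVI_weak} directly yields \eqref{eq:contr_weak} because $R(t,t) = c^{-2}(t)$. The distorted convexity \eqref{eq:RCD_weak} follows by applying \eqref{eq:EVI_weak} twice, once with endpoint pair $(\mu_s, \mu_0)$ and once with $(\mu_s, \mu_1)$ on the same time interval $[t, t+h]$, then multiplying by $(1-s)$ and $s$ and summing. The right-hand side collapses via the geodesic identity $(1-s) W_2^2(\mu_s, \mu_0) + s W_2^2(\mu_s, \mu_1) = s(1-s) W_2^2(\mu_0, \mu_1)$, while the left-hand side is bounded below using the elementary metric estimate $(1-s) d^2(x, y_0) + s d^2(x, y_1) \geq s(1-s) d^2(y_0, y_1)$ applied to $x = H_{t+h}\mu_s$ and $y_i = H_t\mu_i$, which is valid in every metric space.

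The main obstacle will be the limit $\varepsilon \to 0$ in the regularized $\EVIw$-type inequality. Since the action bound from Proposition~\ref{prop:PropertiesReg}(v) is only asymptotic and the regularized measures live on the enlarged graph $\sfG^{2\varepsilon}$, both the Wasserstein terms and the entropy require lower-semicontinuity arguments tailored to $\nu_s^\varepsilon \weakcvg \nu_s$, together with a Fatou-type step on the action integral. A secondary delicacy is verifying that the chain rule for $\Ent$ together with \eqref{eq:contr_weak} can indeed be integrated along $r \mapsto H_r \nu_s^\varepsilon$; this is where the density bound $1/(2\varepsilon)$ and the $C^1$-regularity from Proposition~\ref{prop:PropertiesReg}(i), wrapped into the notion of strongly regular curve, become essential and explain why Theorem~\ref{thm:ExStrongReg} is a necessary preparatory step.
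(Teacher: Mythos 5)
Your proposal follows essentially the same route as the paper: Kuwada duality (Theorem \ref{thm:kuwada_dual}) for the closing implications, regularization of a Wasserstein geodesic into strongly regular curves (Theorem \ref{thm:ExStrongReg}) combined with the action--entropy inequality along $s\mapsto H_{\vartheta(s)}\mu_{h(s)}$ (Theorem \ref{thm:IneqReg}, which makes rigorous your ``Lagrangian action-entropy computation'' via the Hopf--Lax/Kantorovich duality and the $\delta$-regularized entropy $\Ent_\delta$) for \eqref{eq:BEw}$\Rightarrow$\eqref{eq:EVI_weak}, and the two-fold application of \eqref{eq:EVI_weak} with the geodesic scaling identity and the inequality $(1-s)a^2+sb^2\geq s(1-s)(a+b)^2$ for \eqref{eq:RCD_weak}. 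The limit passages you flag as the main obstacles are exactly the ones the paper carries out, so the proposal is correct and matches the paper's argument.
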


\begin{remark}\label{rem:EVI_to_Kuwada}
    Using the identity
    \begin{align*}
        R(t_0,t_1) = \begin{cases}
            \frac{1}{2C^2K(t_1-t_0)}\bra*{e^{2Kt_1} - e^{2Kt_0}} &: t_0\neq t_1 \\
            C^{-2}e^{2Kt} &: t_0=t=t_1
        \end{cases},
    \end{align*}
    inequality \eqref{eq:EVI_weak} coincides with \eqref{eq:contr_weak} when choosing $t_0=t=t_1$.
\end{remark}

The remainder of this section is devoted to the proof of Theorem \ref{thm:equivalence}, closely following the arguments presented in \cite{stefani_generalized_2022}[Section 5] and the proof of the equivalence result therein, adapted to the new regularization on metric graphs.

\subsection{Strongly regular curves}

The main difficulty in the proof of Theorem \ref{thm:equivalence} is showing that \eqref{eq:BEw} implies \eqref{eq:EVI_weak}. In general, the curves we consider will lack the regularity necessary to show the estimate directly.
For this reason, we establish an inequality close to \eqref{eq:EVI_weak} for a regularized version of the curve $s\mapsto \mu_s$ first using the notion of strongly regular curves introduced in \cite{stefani_generalized_2022}[Definition 5.1]. 

\begin{definition}\label{def:StronglyReg}
    A curve $s\mapsto \mu_s\in\AC\bra*{[0,1];\Prob(X)}$ is said to be \emph{strongly regular} if $\frac{\dx{\mu_s}}{\dx{\leb}} = f_s$ for all $s\in[0,1]$ with 
    \begin{align*}
        s\mapsto f_s \in C^1\bra*{[0,1]; L^2(\sfG)}.
    \end{align*}
\end{definition}

As a consequence of Hölder's and Jensen's inequalities, the entropy along strongly regular curves on metric graphs is finite and bounded. Moreover, we can characterize the change in the action functional for such curves. 

\begin{lemma}\label{lem:DerivActReg}
    Let $s\mapsto \mu_s \in \AC^2\bra*{[0,1];\Prob(\sfG)}$ be a strongly regular curve with $f_s = \frac{\dx{\mu_s}}{\dx{\leb}}$ and fix $h\in C^1\bra*{[0,1];[0,1]}$, $\vartheta\in C^2\bra*{[0,1];[0,\infty)}$ such that $h(0) = 0$, $h(1)=1$ and $\Dot{\vartheta}\geq 0$, $\vartheta(s)>0$ for all $s>0$.
    We define 
    \begin{align*}
        s\mapsto \Tilde{\mu}_s = H_{\vartheta(s)}\mu_{h(s)} \in C\bra*{[0,1];\Prob(\sfG)}
    \end{align*}
    with density $\Tilde{f}_s = \frac{\dx{\Tilde{\mu}_s}}{\dx{\leb}}$. Let $\varphi\in {\rm Lip}_{+}(\sfG)$ be given and set
    \begin{align*}
        \varphi_s = Q_s\varphi(x) \coloneqq \inf\limits_{y\in \sfG} \varphi(y) + \frac{1}{2s}d(x,y)
    \end{align*}
    for $s\in[0,1]$ and $Q_0\varphi = \varphi$. Then, for a.e. $s\in(0,1)$ it holds that 
    \begin{align*}
        s\mapsto \int_\sfG \varphi_s \dx{\Tilde{\mu}_s} \in {\rm Lip}\bra*{[0,1]}
    \end{align*}
    and for the derivative we have that
    \begin{align*}
        \frac{\dx{}}{\dx{s}} \int_\sfG \varphi_s \dx{\Tilde{\mu}_s} &= - \frac{1}{2}\int_\sfG \lip{\varphi_s}^2 \dx{\Tilde{\mu}_s} - \Dot{\vartheta}(s) \int_\sfG \nabla f \cdot \nabla g \dx{\leb} + \Dot{h}(s)\int_\sfG \Dot{f}_{h(s)} P_{\vartheta(s)}\varphi_s \dx{\leb}.
    \end{align*}
\end{lemma}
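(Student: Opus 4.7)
The plan is to write the integral as $\int_\sfG \varphi_s\,\Tilde{f}_s\dx{\leb}$ and differentiate in $s$ by the product rule, combining three ingredients: (i) the Hamilton--Jacobi equation for the Hopf--Lax semigroup, which on geodesic metric spaces yields $\partial_s\varphi_s = -\tfrac{1}{2}\lip{\varphi_s}^2$ for a.e.\ $s\in(0,1)$; (ii) the fact that, since $\vartheta(s)>0$ for $s>0$, by Lemma~\ref{lem:identify_gfs} the density $\Tilde{f}_s = P_{\vartheta(s)}f_{h(s)}$ evolves by the heat equation~\eqref{eq:heat} in the time parameter, giving
\begin{align*}
\partial_s\Tilde{f}_s = \Dot{\vartheta}(s)\,\Delta P_{\vartheta(s)}f_{h(s)} + \Dot{h}(s)\,P_{\vartheta(s)}\Dot{f}_{h(s)}
\end{align*}
in the $L^2$-sense; and (iii) the self-adjointness of $P_{\vartheta(s)}$ on $L^2(\sfG)$, a consequence of the symmetry of the heat kernel from Lemma~\ref{lem:heat_kernel}. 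Integration by parts edgewise, in which the Kirchhoff vertex conditions satisfied by $\Tilde{f}_s$ (being in the domain of the Laplacian with standard coupling) combine with the continuity of $\varphi_s$ across vertices to kill the boundary contributions, then assembles the three terms into the claimed identity.

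For the Lipschitz regularity of $s\mapsto\int_\sfG\varphi_s\dx{\Tilde{\mu}_s}$, I would estimate the integrand pointwise in $s$. Standard Hopf--Lax regularity on a compact geodesic space gives the uniform bound $\abs{\partial_s\varphi_s(x)}\leq \tfrac{1}{2}\Lip{\varphi}^2$ for a.e.\ $s$ and every $x\in\sfG$, so $s\mapsto\varphi_s$ is Lipschitz in $C(\sfG)$. On the other hand, for any compact sub-interval $[\delta,1]$ with $\delta>0$ the two contributions to $\partial_s\Tilde{f}_s$ above are bounded in $L^2(\sfG)$: the heat term by $\norm{\Delta P_{\vartheta(s)}g}_{L^2}\leq C(\delta)\norm{g}_{L^2}$ (analyticity of the heat semigroup on $\sfG$, cf.\ \cite{fijavz_variational_2007}) and the reparametrization term by strong regularity of $s\mapsto f_s$ in $C^1([0,1];L^2(\sfG))$. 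By Hölder's inequality applied to the product of a $C^1([\delta,1];L^2)$ family of densities against a Lipschitz family of uniformly bounded functions, the integral is Lipschitz on $[\delta,1]$; since the mass bound is uniform and $\delta>0$ was arbitrary, Lipschitz continuity extends to $[0,1]$.

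The main obstacle is the rigorous differentiation under the integral sign for the $\varphi_s$-term: the Hamilton--Jacobi equality only holds for a.e.\ $s$ and $\varphi_s$ is neither $C^1$ in $s$ nor smooth in $x$. I would handle this by showing that $s\mapsto\varphi_s(x)$ is locally Lipschitz for each $x$, with an a.e.\ derivative dominated by $\tfrac{1}{2}\Lip{\varphi}^2$ independently of $x$, so that dominated convergence commutes the limit with the integration against $\Tilde{f}_s\dx{\leb}$; identifying the pointwise a.e.\ limit with $-\tfrac{1}{2}\lip{\varphi_s}^2$ then produces the first term. A secondary point is justifying the integration by parts on the graph; here I would note that $\varphi_s\in {\rm Lip}(\sfG)\subset W^{1,2}(\sfG)$ by Theorem~\ref{thm:Rademacher}, while $P_{\vartheta(s)}f_{h(s)}$ lies in the domain of the Laplacian by the smoothing from \cite{fijavz_variational_2007} together with Lemma~\ref{lem:HeatSmooth}, so the Kirchhoff flux sum vanishes at each vertex and only the interior edge contributions remain.
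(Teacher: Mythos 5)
The paper does not prove this lemma on the graph at all: it verifies the standing assumptions $(\rm A1)$--$(\rm A5)$ of \cite{stefani_generalized_2022} in Lemma \ref{lem:MetrPropGraphs} and then invokes \cite{stefani_generalized_2022}[Lemma 5.3] as a black box. Your proposal instead re-derives the identity directly on $\sfG$, and its skeleton --- Hamilton--Jacobi equation for the Hopf--Lax semigroup, the evolution $\partial_s\Tilde{f}_s=\Dot{\vartheta}(s)\Delta P_{\vartheta(s)}f_{h(s)}+\Dot{h}(s)P_{\vartheta(s)}\Dot{f}_{h(s)}$, self-adjointness of $P_t$, and edgewise integration by parts killed at the vertices by the Kirchhoff condition against the continuity of $\varphi_s$ --- is exactly the mechanism inside the cited abstract proof, made concrete. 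This is a legitimate and arguably more self-contained route; as a side benefit your computation identifies the ambiguously typeset middle term of the statement as $-\Dot{\vartheta}(s)\int_\sfG \nabla\varphi_s\cdot\nabla\Tilde{f}_s\dx{\leb}$. The handling of the a.e.-in-$s$ differentiability of $Q_s\varphi$ via a uniform bound on the difference quotients and dominated convergence is also the standard and correct device.

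There is, however, one genuine gap: the claimed Lipschitz continuity of $s\mapsto\int_\sfG\varphi_s\dx{\Tilde{\mu}_s}$ on all of $[0,1]$ does not follow from your argument. You bound $\partial_s\Tilde{f}_s$ in $L^2$ on $[\delta,1]$ by analyticity, i.e.\ $\norm{\Delta P_{\vartheta(s)}g}_{L^2}\leq C\vartheta(s)^{-1}\norm{g}_{L^2}$, so the Lipschitz constant you obtain is $C(\delta)$ and blows up as $\delta\to0$ whenever $\vartheta(0)=0$; ``$\delta$ arbitrary'' then only yields local Lipschitz continuity on $(0,1]$, not on $[0,1]$. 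This is not a corner case: in the proof of Theorem \ref{thm:equivalence} the lemma is applied with $\vartheta(s)=(1-s)t_0+st_1$ and $t_0=0$ is allowed. The global Lipschitz bound has to come from a different estimate, namely the Kantorovich-duality splitting
\begin{align*}
    \abs*{\int_\sfG \varphi_{s_1}\dx{\Tilde{\mu}_{s_1}}-\int_\sfG \varphi_{s_0}\dx{\Tilde{\mu}_{s_0}}}
    \leq \norm*{Q_{s_1}\varphi-Q_{s_0}\varphi}_{C(\sfG)} + \Lip{\varphi}\, W_1\bra*{\Tilde{\mu}_{s_1},\Tilde{\mu}_{s_0}},
\end{align*}
where the first term is $O(\abs{s_1-s_0})$ by the uniform Hopf--Lax time-increment bound and the second is controlled using the $W_1$-contraction of $H_t$ (from \eqref{eq:BEw}) together with the $C^1\bra*{[0,1];L^2(\sfG)}$ regularity of $s\mapsto f_s$ on the compact graph. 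Without replacing your $[\delta,1]$-argument by an estimate of this type, the statement you prove is strictly weaker than the lemma as used downstream.
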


\begin{proof}
    This is the content of \cite{stefani_generalized_2022}[Lemma 5.3] which is applicable because of Lemma \ref{lem:MetrPropGraphs}.
\end{proof}

\begin{remark}
    The definition $\varphi_s = Q_s\varphi$ for $s\geq 0$ induces a semigroup, namely the Hopf-Lax semigroup. It is closely related to the dual formulation of optimal transport and has already been studied in the context of gradient flows on metric graphs in \cite{Erbar_2022}[Section 3.4].
\end{remark}

Similar to the action estimate from the previous lemma, we want to characterize the change of the entropy next. However, the entropy may not be differentiable if the curve vanishes on a set of positive Lebesgue measure.
For this reason, we regularize the integrand as well. 
For $\delta>0$ and $\mu\in\Prob(\sfG)$ we define 
\begin{align*}
    \eta_\delta(r) \coloneqq \log(r+\delta)\quad\text{and} \quad \Ent_\delta(\mu) = \begin{cases}
        \int_\sfG \eta_\delta(f) \dx{\mu} &: f = \frac{\dx{\mu}}{\dx{\leb}}\\ \infty &:\text{else}
    \end{cases},
\end{align*}
which implies $\Ent_\delta(\mu) \geq \Ent(\mu)$ for all $\delta>0$. Moreover, if $\mu\in \ProbAc(\sfG)$ with density $f\in L^2(\sfG)$, then it holds that 
\begin{align*}
    \Ent_\delta(\mu) &= \int_\sfG \eta_\delta(f) \dx{\mu} = \int_\sfG \eta_\delta(f) - \log(\delta) \dx{\mu} + \log(\delta) \\&= \int_\sfG \log\bra*{\frac{f+\delta}{\delta}} \dx{\mu} + \log(\delta) \leq  \frac{1}{\delta} \int_\sfG f^2 \dx{\leb} + \log(\delta) <\infty.
\end{align*}
In addition to this inequality, if $\mu\in\ProbAc(\sfG)$ with density $f\in L^2(\sfG)$ we have 
\begin{align}\label{eq:LimRegEntr}
    \Ent(\mu) = \lim\limits_{\delta\to 0} \Ent_\delta(\mu).
\end{align}
Indeed, for such $\mu\in\ProbAc(\sfG)$ and $\delta\in(0,1]$ we can estimate 
\begin{align*}
    \pra{\log(f+\delta)}^- \leq \pra{\log(\delta)}^- ~,\quad &\int_\sfG \pra{\log(f+\delta)}^-\dx{\mu} \leq \int_\sfG \pra{\log(f)}^-\dx{\mu} <\infty, 
\end{align*}
since an application of Hölder's inequality together with $-r\log(r) \leq \sqrt{r}$ for $r\in[0,1]$ gives
\begin{align*}
    \int_\sfG \pra{\log(f)}^-\dx{\mu} \leq \int_\sfG \sqrt{f} \dx{\leb} \leq C \sqrt{\norm{f}_{L^2(\sfG)}}
\end{align*}
for a constant $C>0$ depending only on $\leb(\sfG)$.
To see that $-r\log(r) \leq \sqrt{r}$ for $r\in[0,1]$, note that we can equivalently write $\sqrt{r}\bra*{1+\sqrt{r}\log(r)}\geq 0$ and the left-hand side is a product of non-negative terms for $r\in[0,1]$. 
For the positive part of the integrand it holds that
\begin{align*}
    \pra{\log(f+\delta)}^+ \leq \pra{\log(f+1)}^+ \leq f\quad \text{and}\quad &\int_\sfG \pra{\log(f+\delta)}^+\dx{\mu} \leq \int_\sfG f^2\dx{\leb} <\infty.
\end{align*}
This implies the existence of an integrable majorant, allowing us to apply the dominated convergence theorem and conclude \eqref{eq:LimRegEntr}.
To shorten the notation, we introduce the functions
\begin{align*}
    \hat{\eta}_\delta(r) &\coloneqq \eta_\delta(r) - \log(\delta),\quad
    p_\delta(r) \coloneqq \hat{\eta}_\delta(r) + r \hat{\eta}'_\delta(r)
\end{align*}
for $r\geq 0$. 
With this notation we have the following characterization of the time derivative of the regularized entropy along curves as in Lemma \ref{lem:DerivActReg}.  

\begin{lemma}\label{lem:DerivEntrReg}
    Let $s\mapsto \Tilde{\mu}_s \in C([0,1];\Prob(\sfG))$ be as in Lemma \ref{lem:DerivActReg} and let $\delta>0$. It holds that $s\mapsto \Ent_\delta(\Tilde{\mu}_s)\in C^1\bra*{(0,1]}$ with 
    \begin{align*}
        \frac{\dx{}}{\dx{s}} \Ent_\delta(\Tilde{\mu}_s) \leq - \Dot{\vartheta}(s) \int_\sfG \Gamma(g_s^\delta) \dx{\Tilde{\mu}_s} + \Dot{h}(s) \int_\sfG \Dot{f}_{h(s)} P_{\vartheta(s)} g_s^\delta \dx{\leb}
    \end{align*}
    for $s\in(0,1]$ and $g_s^\delta = p_\delta(\Tilde{f}_s)$.
\end{lemma}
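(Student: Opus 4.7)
The plan is to differentiate $\Ent_\delta(\tilde{\mu}_s)$ by the chain rule, rewrite the diffusive contribution via integration by parts on each edge, and convert $p_\delta'(r)|\nabla\tilde f_s|^2$ into the gradient $\Gamma(g_s^\delta)$ by means of an elementary pointwise inequality. For $s\in(0,1]$ we have $\vartheta(s)>0$, so by the smoothing properties of the heat semigroup (Lemma \ref{lem:heat_kernel}) and $s\mapsto f_s\in C^1([0,1];L^2(\sfG))$, the density $\tilde f_s = P_{\vartheta(s)}f_{h(s)}$ is smooth on each edge, continuous on $\sfG$ with standard coupling at the vertices, and $C^1$ in $s$ as an $L^2$-valued map with
\begin{align*}
    \partial_s \tilde f_s = \dot{\vartheta}(s)\,\Delta \tilde f_s + \dot h(s)\, P_{\vartheta(s)}\dot f_{h(s)}.
\end{align*}
Since $\|\tilde f_s\|_{L^\infty(\sfG)}$ is locally bounded in $s$ by Lemma \ref{lem:heat_kernel}, $\eta_\delta$ and $p_\delta$ have bounded derivatives on the relevant range, and $\partial_s\tilde f_s$ depends continuously on $s$ in $L^2(\sfG)$, differentiation under the integral sign is justified. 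Mass conservation $\int_\sfG\partial_s\tilde f_s\dx{\leb}=0$ absorbs the constant $\log\delta$ into the passage from $\eta_\delta$ to $\hat\eta_\delta$, so that
\begin{align*}
    \frac{d}{ds}\Ent_\delta(\tilde\mu_s)
    = \int_\sfG p_\delta(\tilde f_s)\,\partial_s\tilde f_s\dx{\leb}
    = \dot\vartheta(s)\!\int_\sfG p_\delta(\tilde f_s)\Delta\tilde f_s\dx{\leb} + \dot h(s)\!\int_\sfG p_\delta(\tilde f_s)P_{\vartheta(s)}\dot f_{h(s)}\dx{\leb}.
\end{align*}

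The transport term is an equality: by self-adjointness of $P_{\vartheta(s)}$ on $L^2(\sfG)$ it becomes $\dot h(s)\int_\sfG \dot f_{h(s)}\,P_{\vartheta(s)}g_s^\delta\dx{\leb}$, exactly the second term in the claim. For the diffusive term I would integrate by parts edge by edge; the boundary contributions at each $\sfv\in\sfV$ factor as $p_\delta(\tilde f_s(\sfv))\sum_{\sfe\in\sfE(\sfv)}\nabla\tilde f_s^\sfe(\sfv)\cdot\normal^\sfe(\sfv)$ and vanish by continuity of $p_\delta(\tilde f_s)$ at the vertex together with the Kirchhoff condition inherited from \eqref{eq:heat}, yielding
\begin{align*}
    \int_\sfG p_\delta(\tilde f_s)\Delta\tilde f_s\dx{\leb} = -\int_\sfG p_\delta'(\tilde f_s)|\nabla\tilde f_s|^2\dx{\leb}.
\end{align*}

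The step that turns this equality into the stated inequality is the elementary pointwise bound
\begin{align*}
    r\,p_\delta'(r) = r\cdot\frac{r+2\delta}{(r+\delta)^2} = 1 - \frac{\delta^2}{(r+\delta)^2} \leq 1, \qquad r\geq 0.
\end{align*}
Multiplying by $p_\delta'(\tilde f_s)|\nabla\tilde f_s|^2\geq 0$ and invoking the chain rule with Theorem \ref{thm:IdCheeger} gives $p_\delta'(\tilde f_s)|\nabla\tilde f_s|^2\geq (p_\delta'(\tilde f_s))^2\tilde f_s|\nabla\tilde f_s|^2 = \Gamma(g_s^\delta)\,\tilde f_s$ pointwise $\leb$-a.e.; integrating and then multiplying by $-\dot\vartheta(s)\leq 0$ produces $-\dot\vartheta(s)\int_\sfG\Gamma(g_s^\delta)\dx{\tilde\mu_s}$ as an upper bound for the diffusive contribution. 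Combining this with the transport term completes the proof. The main technical obstacle I anticipate is the verification of $C^1$-regularity of $s\mapsto\Ent_\delta(\tilde\mu_s)$ on $(0,1]$, specifically continuity of $s\mapsto\nabla\tilde f_s$ in $L^2(\sfG)$, which I would handle by combining the local $L^\infty$-bound from Lemma \ref{lem:heat_kernel}, analyticity of the edgewise heat flow for positive times, and the $C^1$-in-$s$ regularity of $\vartheta$ and $h$.
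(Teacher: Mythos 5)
Your proposal is correct, but it takes a different route from the paper: the paper does not prove the lemma directly at all, it invokes the abstract Dirichlet-form version from \cite{stefani_generalized_2022}[Lemma 5.5] and merely checks that the standing assumptions (Lemma \ref{lem:MetrPropGraphs}) hold on metric graphs. You instead give a self-contained computation that exploits the one-dimensional structure: the decomposition $\partial_s\tilde f_s=\dot\vartheta(s)\Delta\tilde f_s+\dot h(s)P_{\vartheta(s)}\dot f_{h(s)}$, edgewise integration by parts with the boundary terms killed by continuity of $p_\delta(\tilde f_s)$ at the vertices together with the Kirchhoff condition, self-adjointness of $P_{\vartheta(s)}$ for the transport term, and the pointwise bound $r\,p_\delta'(r)=1-\delta^2/(r+\delta)^2\le 1$, which converts $p_\delta'(\tilde f_s)\abs{\nabla\tilde f_s}^2$ into $\Gamma(g_s^\delta)\tilde f_s$ with the correct sign after multiplication by $-\dot\vartheta(s)\le 0$. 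I checked the algebra ($p_\delta'(r)=(r+2\delta)/(r+\delta)^2$, the direction of every inequality, and the identification of the transport term) and it is consistent with the paper's definitions of $\hat\eta_\delta$ and $p_\delta$; the $\log\delta$ shift is indeed harmless by mass conservation. This is in substance the same mechanism as the abstract proof, but your version buys transparency and independence from the external reference, at the cost of having to justify the regularity ingredients yourself: the chain rule for $s\mapsto P_{\vartheta(s)}f_{h(s)}$ in $L^2$ (which genuinely needs $\vartheta(s)>0$ and analyticity of the semigroup, and is the reason the lemma is stated on $(0,1]$ rather than $[0,1]$), the fact that $P_t$ maps into the domain of the generator so that the Kirchhoff condition is available for the integration by parts, and the continuity in $s$ of $\nabla\tilde f_s$ in $L^2(\sfG)$ for the claimed $C^1$ regularity. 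You flag all three of these, so I see no gap; if you write this up, the semigroup differentiation step is the one place where the estimate $\norm{\Delta P_t g}_{L^2}\lesssim t^{-1}\norm{g}_{L^2}$ should be made explicit.
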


\begin{proof}
    The statement has been shown in \cite{stefani_generalized_2022}[Lemma 5.5] in an abstract setting. We can apply the result because of Lemma \ref{lem:MetrPropGraphs}.
\end{proof}

Now, combining Lemma \ref{lem:DerivActReg} and Lemma \ref{lem:DerivEntrReg} results in an inequality that relates action and entropy along strongly regular curves. 

\begin{theorem}\label{thm:IneqReg}
    Let $s\mapsto \Tilde{\mu}_s \in C([0,1];\Prob(\sfG))$ be as in Lemma \ref{lem:DerivActReg}. For all $\delta>0$ it holds that 
    \begin{align*}
        \frac{1}{2} W_2^2\bra*{\Tilde{\mu}_1, \Tilde{\mu}_0} - \int_0^1 \Ddot{\vartheta}(s) \Ent_\delta(\Tilde{\mu}_s) \dx{s} + \Dot{\vartheta}(1) \Ent_\delta(\Tilde{\mu}_1) \leq \Dot{\vartheta}(0) \Ent_\delta(\Tilde{\mu}_0) + \frac{1}{2 I_\vartheta(1)} \int_0^1 \abs*{\Dot{\mu}_s}^2 \dx{s}, 
    \end{align*}
    where 
    \begin{align*}
        I_\vartheta(s) = \int_0^s c^{-2}\bra*{\vartheta(r)}\dx{r} \quad \text{and}\quad h(s) = \frac{I_\vartheta(s)}{I_\vartheta(1)}~, \quad s\in[0,1],
    \end{align*}
    with $c(t) = C e^{-Kt}$ for $t\geq 0$ from Theorem \ref{thm:Bew_Graph}.
\end{theorem}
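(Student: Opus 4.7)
My plan is to adapt the argument of \cite{stefani_generalized_2022}[Theorem~5.16] to the metric graph setting, viewing the heated curve $\tilde\mu_s = H_{\vartheta(s)}\mu_{h(s)}$ as a smooth competitor connecting $\tilde\mu_0$ to $\tilde\mu_1$. The key ingredients are the derivative formulas of Lemmas~\ref{lem:DerivActReg} and~\ref{lem:DerivEntrReg}, the weak Bakry--\'Emery estimate of Theorem~\ref{thm:Bew_Graph}, and the Kantorovich--Hopf--Lax duality
$$
  \tfrac{1}{2}W_2^2(\tilde\mu_0, \tilde\mu_1) = \sup_{\varphi\in \Lip{+}(\sfG)} \bra*{\int_\sfG Q_1\varphi\, \dx{\tilde\mu_1} - \int_\sfG \varphi \,\dx{\tilde\mu_0}},
$$
so the proof reduces to controlling the Kantorovich dual expression for each $\varphi \in \Lip{+}(\sfG)$ by the right-hand side of the claim.

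Fix $\varphi$ and set $\varphi_s = Q_s\varphi$. By Lemma~\ref{lem:DerivActReg} and the fundamental theorem of calculus, $\int Q_1\varphi\, \dx{\tilde\mu_1} - \int\varphi\,\dx{\tilde\mu_0}$ equals the $s$-integral of three terms. The convective one, $\dot h(s)\int \dot f_{h(s)} P_{\vartheta(s)}\varphi_s \,\dx{\leb}$, is bounded by representing $s\mapsto\mu_s$ via its continuity-equation velocity, applying Cauchy--Schwarz and the weak Bakry--\'Emery bound of Theorem~\ref{thm:Bew_Graph}:
$$
  \int \dot f_{h(s)} P_{\vartheta(s)}\varphi_s \,\dx{\leb} \leq |\dot\mu_{h(s)}|\, c(\vartheta(s))\,\Bigl(\int \Gamma(\varphi_s)\,\dx{\tilde\mu_s}\Bigr)^{1/2}.
$$
A balanced Young inequality exactly cancels the $-\tfrac12\int\Gamma(\varphi_s)\,\dx{\tilde\mu_s}$ term from Lemma~\ref{lem:DerivActReg}, leaving $\tfrac12 \dot h(s)^2 c^2(\vartheta(s))|\dot\mu_{h(s)}|^2$. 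Thanks to $\dot h(s) = c^{-2}(\vartheta(s))/I_\vartheta(1)$, the change of variables $u = h(s)$ turns the $s$-integral of this quadratic piece into exactly the action contribution $\tfrac{1}{2I_\vartheta(1)}\int_0^1|\dot\mu_u|^2\,\dx{u}$.

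The remaining mixed term $-\int_0^1\dot\vartheta(s)\int \nabla\varphi_s\cdot \nabla\tilde f_s \,\dx{\leb}\,\dx{s}$ must be absorbed into the entropy variation. Using $\nabla\tilde f_s = \tilde f_s\,\nabla\log\tilde f_s$ together with the key inequality $p_\delta'(r)\,r\leq 1$ that underlies Lemma~\ref{lem:DerivEntrReg}, this mixed term is bounded above by $-\int_0^1 \dot\vartheta(s)\,\tfrac{d}{ds}\Ent_\delta(\tilde\mu_s)\,\dx{s}$; integration by parts in $s$ then produces the combination $\dot\vartheta(0)\Ent_\delta(\tilde\mu_0) - \dot\vartheta(1)\Ent_\delta(\tilde\mu_1) + \int_0^1 \ddot\vartheta(s)\,\Ent_\delta(\tilde\mu_s)\,\dx{s}$, which together with the action bound from the previous step is precisely the right-hand side of the claim. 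Taking the supremum over $\varphi\in\Lip{+}(\sfG)$ then concludes the proof. The main technical obstacle is this last matching: the Young weights in the convective estimate must be calibrated against the specific value $\dot h(s) = c^{-2}(\vartheta(s))/I_\vartheta(1)$ so that the cancellation of $\tfrac12\int\Gamma(\varphi_s)\,\dx{\tilde\mu_s}$ is preserved, while the mixed gradient term simultaneously aligns with the $\delta$-regularized entropy derivative of Lemma~\ref{lem:DerivEntrReg}. This rigidity forces the particular definition of $h$ and explains the appearance of the factor $1/I_\vartheta(1)$.
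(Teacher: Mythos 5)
The paper does not reprove this statement at all: its proof of Theorem \ref{thm:IneqReg} is a one-line citation of \cite{stefani_generalized_2022}[Lemma 5.6], with the actual content being the verification (via Lemma \ref{lem:MetrPropGraphs} and Lemma \ref{lem:HeatSmooth}) that the abstract hypotheses hold on a metric graph. Your proposal instead reconstructs the underlying argument, and its skeleton is the right one and matches the cited proof: Kantorovich--Hopf--Lax duality for $\tfrac12 W_2^2$, the derivative formulas of Lemmas \ref{lem:DerivActReg} and \ref{lem:DerivEntrReg}, Cauchy--Schwarz plus the gradient estimate of Theorem \ref{thm:Bew_Graph} plus Young on the convective term, and the change of variables $u=h(s)$ using $\dot h(s)=c^{-2}(\vartheta(s))/I_\vartheta(1)$ to produce the factor $1/(2I_\vartheta(1))$. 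Two graph-specific points that you take for granted are precisely what the paper's proof supplies: the Bakry--\'Emery step requires $\weak{P_t\varphi_s}=\lip{P_t\varphi_s}$ (Lemma \ref{lem:HeatSmooth}) to reconcile the local Lipschitz constant produced by the Hopf--Lax semigroup with the $\Gamma$-operator, and the cancellation against $-\tfrac12\int\lip{\varphi_s}^2\dx{\Tilde{\mu}_s}$ uses $\Gamma(\varphi_s)\le\lip{\varphi_s}^2$.

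There is, however, one step that does not work as written: the absorption of the mixed term. You assert that $-\int_0^1\dot\vartheta(s)\int\nabla\varphi_s\cdot\nabla\Tilde{f}_s\dx{\leb}\dx{s}$ is bounded above by $-\int_0^1\dot\vartheta(s)\tfrac{\dx{}}{\dx{s}}\Ent_\delta(\Tilde{\mu}_s)\dx{s}$ directly from $r\,p_\delta'(r)\le1$. But Lemma \ref{lem:DerivEntrReg} bounds $\tfrac{\dx{}}{\dx{s}}\Ent_\delta(\Tilde{\mu}_s)$ by the sum of a Fisher-type term $-\dot\vartheta(s)\int\Gamma(g_s^\delta)\dx{\Tilde{\mu}_s}$ \emph{and} a convective term $\dot h(s)\int\dot f_{h(s)}P_{\vartheta(s)}g_s^\delta\dx{\leb}$; the claimed comparison would pit a $\varphi$-dependent quantity of indefinite sign against a $\varphi$-independent one and fails in general. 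The correct bookkeeping adds the two derivative formulas first and applies the Cauchy--Schwarz/Bakry--\'Emery/Young step to the \emph{combined} potential $\varphi_s+\dot\vartheta(s)g_s^\delta$; expanding $\tfrac12\int\Gamma(\varphi_s+\dot\vartheta(s) g_s^\delta)\dx{\Tilde{\mu}_s}$, the cross term $\dot\vartheta(s)\int\Gamma(\varphi_s,g_s^\delta)\dx{\Tilde{\mu}_s}=\dot\vartheta(s)\int \Tilde{f}_s\,p_\delta'(\Tilde{f}_s)\,\nabla\varphi_s\cdot\nabla\Tilde{f}_s\dx{\leb}$ is the object that pairs with the mixed term via $r\,p_\delta'(r)=1-\delta^2/(r+\delta)^2\le1$, while the quadratic term $\tfrac{\dot\vartheta(s)^2}{2}\int\Gamma(g_s^\delta)\dx{\Tilde{\mu}_s}$ is dominated by the Fisher-type term; only after this does integration by parts in $s$ yield the $\ddot\vartheta$-integral and the boundary entropies. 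So the route is sound, but this step must be redone along these lines (or the theorem simply cited, as the paper does).
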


\begin{proof}
    This is the content of \cite{stefani_generalized_2022}[Lemma 5.6]. Note that the result is applicable because of Lemma \ref{lem:MetrPropGraphs} and Lemma \ref{lem:HeatSmooth}.
\end{proof}

The choice $\vartheta(s) = (1-s)t_0 + s t_1$ in Theorem \ref{thm:IneqReg} recovers \eqref{eq:EVI_weak}, thus \eqref{eq:BEw} already implies \eqref{eq:EVI_weak} for strongly regular curves.

\subsection{Regularization of absolutely continuous curves}

In view of Theorem \ref{thm:IneqReg}, a crucial step in the proof of the equivalences is to show the existence of a sequence of strongly regular curves approximating any $\AC^2\bra*{[0,1];\Prob(\sfG)}$ curve. Our strategy is to regularize in two ways. First, we ensure the existence of densities with respect to the Lebesgue measure. To this end, we make use of the regularization procedure introduced in Definition \ref{def:Regular}. Next, we smooth in the time variable as in \cite{stefani_generalized_2022}[Theorem 5.15] by mollification, allowing us to obtain continuous differentiability. 

\begin{theorem}\label{thm:ExStrongReg}
    For any $s\mapsto \mu_s \in \AC^2\bra*{[0,1];\Prob(\sfG)}$ there exists a family of strongly regular curves $s\mapsto \mu^n_s \in \AC^2\bra*{[0,1];\Prob(\sfG^{2/n})}$ for $n\in\N$ such that 
    \begin{enumerate}[label=\roman*)]
        \item $\mu_s^n \weakcvg \mu_s$ in $\sfG$ for all $s\in[0,1]$ as $n\to\infty$,
        \item the action can be estimated by $\limsup_{n\to\infty} \int_0^1 \abs*{\Dot{\mu}^n_s} \dx{s} \leq  \int_0^1 \abs*{\Dot{\mu}_s} \dx{s}$ and
        \item for all $s\in[0,1]$, $t\geq 0$ we have convergence of the entropies $\lim\limits_{n\to\infty} \Ent^{1/n}(H_t\mu_s^n) = \Ent(H_t\mu_s)$, where $\Ent^{1/n}:\sfG^{2/n}\to[0,\infty]$ is the logarithmic entropy on the extended metric graph $\sfG^{2/n}$.
    \end{enumerate}
\end{theorem}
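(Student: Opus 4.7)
The plan is a two-step regularization. First, I would apply the spatial smoothing from Definition \ref{def:Regular} with parameter $\eps = 1/n$, mapping $\mu_s$ to $\mu^{1/n}_s \in \ProbAc(\sfG^{2/n})$. By Proposition \ref{prop:PropertiesReg} \ref{density}, this measure has a density $f^{1/n}_s \leq n/2$; by \ref{cvg} and the narrow continuity of $s \mapsto \mu_s$, it depends narrowly continuously on $s$. Second, I would mollify $s \mapsto f^{1/n}_s$ in time: extend $f^{1/n}_{(\cdot)}$ to $\R$ (say by reflecting through the endpoints), convolve with a smooth kernel $\rho_n$ of unit mass and scale $\sim 1/n$, and set $\mu^n_s = f^n_s \leb$ with $f^n_s = \rho_n \ast f^{1/n}_{(\cdot)}(s)$. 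The uniform $L^\infty$-bound combined with the smoothness of $\rho_n$ ensures that $s \mapsto f^n_s$ lies in $C^1([0,1];L^2(\sfG^{2/n}))$, so $\mu^n$ is strongly regular per Definition \ref{def:StronglyReg}.

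For property (i), the narrow convergence $\mu^n_s \weakcvg \mu_s$ at each $s$ follows by combining Proposition \ref{prop:PropertiesReg} \ref{cvg} with the narrow continuity of $s \mapsto \mu^{1/n}_s$ and standard mollifier estimates. For property (ii), Proposition \ref{prop:PropertiesReg} \ref{ActionEst} supplies $\abs*{\Dot\mu^{1/n}_s} \leq (1 + 2/(n\ell_{\min}))\abs*{\Dot\mu_s}$ on the extended graph, while time mollification, being a convex combination in $(\Prob(\sfG^{2/n}),W_2)$, cannot enlarge the $L^2$-action, in analogy with the argument in \cite{stefani_generalized_2022}[Theorem 5.15]. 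Passing to the limit $n \to \infty$ then yields the stated bound on the total action.

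Property (iii), the entropy convergence, is the principal technical obstacle. The upper bound will come from two applications of Jensen's inequality to the convex function $\eta(r) = r\log r$: the spatial regularization is a local average of the density (Proposition \ref{prop:PropertiesReg} \ref{density}), yielding $\Ent^{1/n}(\mu^{1/n}_s) \leq \Ent(\mu_s)$; and convolution in time gives $\Ent^{1/n}(\mu^n_s) \leq \int \rho_n(s-r)\Ent^{1/n}(\mu^{1/n}_r)\dx{r}$. The matching lower bound is the weak lower semicontinuity of entropy, interpreted consistently on the extended graph via the embedding $\sfG \hookrightarrow \sfG^{2/n}$. For $t > 0$ one uses Lemma \ref{lem:heat_kernel} and stability of the heat kernels on $\sfG^{2/n}$ as $n \to \infty$ to control $\Ent^{1/n}(H_t\mu^n_s)$ from above and below by $\Ent(H_t\mu_s)$ in the limit.

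The most delicate case is $t = 0$ together with $\mu_s$ singular, where $\Ent(\mu_s) = \infty$ and one must show the divergence $\Ent^{1/n}(\mu^n_s) \to \infty$. The density bound $f^{1/n}_s \leq n/2$ from Proposition \ref{prop:PropertiesReg} \ref{density} combined with the concentration of $\mu_s$ on sets of small Lebesgue measure should force the density to be of order $n$ on sets of measure $\sim 1/n$, producing a contribution of order $\log n$ to the entropy. Ensuring this divergence survives the additional time averaging and handling it uniformly in $s$ is where I expect the main technical work, and it is the step that most clearly distinguishes the metric graph setting from the metric measure group setting treated in \cite{stefani_generalized_2022}.
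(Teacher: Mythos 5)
Your two-step scheme (spatial regularization via Definition \ref{def:Regular} with $\eps=1/n$, then mollification in $s$) is exactly the architecture of the paper's proof, and your treatment of (i), (ii), and of the entropy bounds at $t=0$ via Jensen plus lower semicontinuity follows the same lines. There are, however, two concrete gaps and one misplaced difficulty.

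First, the case $t>0$ in (iii). You invoke ``stability of the heat kernels on $\sfG^{2/n}$ as $n\to\infty$''. No such result is available from Lemma \ref{lem:heat_kernel}, which concerns a single fixed graph; convergence of the heat semigroups of the extended graphs to that of $\sfG$ as the auxiliary edges shrink is a nontrivial problem that the paper never touches. Instead, the paper observes that the regularization map $\calR^{\eps,k}$ is linear and commutes with the dual heat flow, so that $\Ent^{1/n}(H_t\mu_s^n)=\Ent^{1/n}\bra*{\calR^{1/n,n}(H_t\mu_s)}$, and then simply applies the $t=0$ convergence to the curve $s\mapsto H_t\mu_s$, which is again in $\AC^2$ by the contraction estimate. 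You need some such device; as written, this step of your plan would fail. Second, the clean inequality $\Ent^{1/n}(\mu_s^{1/n})\leq\Ent(\mu_s)$ you extract from Jensen is not correct as stated: on the auxiliary edges $\sfe_\sfv^{2\eps}$ the density of $\mu_s^\eps$ is a \emph{sum} over all edges incident to $\sfv$ (Proposition \ref{prop:PropertiesReg} \ref{density}), and the averaging windows $I_\sfe^\eps(x)$ are truncated near edge endpoints, so Jensen only yields $\Ent^\eps(\mu_s^\eps)\leq\int_{\sfG^{2\eps}}(\eta\circ f_s)^\eps\dx{x}+\log(\deg_{\max})\,\mu_s^\eps(\sfG^{2\eps}\setminus\sfG)$; one must then show that the mass near the vertices vanishes as $\eps\to0$ to recover $\limsup_{\eps\to0}\Ent^\eps(\mu_s^\eps)\leq\Ent(\mu_s)$. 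This vertex correction is precisely the graph-specific content of the proof and cannot be waved away.

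Conversely, the case you single out as the principal obstacle, namely $t=0$ with $\Ent(\mu_s)=\infty$, is the easy one: extending all densities by zero to the fixed ambient graph $\sfG^2$ and using weak lower semicontinuity of the entropy there gives $\liminf_{n\to\infty}\Ent^{1/n}(\mu_s^n)\geq\Ent(\mu_s)=\infty$ directly, so no divergence or concentration argument of order $\log n$ is needed. Note also that your proposed concentration argument would not obviously cover absolutely continuous $\mu_s$ with $\int_\sfG\pra{f_s\log f_s}^+\dx{\leb}=\infty$, which is the generic way the entropy is infinite on a compact graph.
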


\begin{proof}
    The proof is divided into four steps. First, we extend the curve in time. Next, we smooth with respect to the space-dependence using the regularization from Definition \ref{def:Regular}. Then, we regularize in time by standard mollification and lastly, we restrict the time-dependence to the initial domain $[0,1]$ again.

    \textbf{Step 1: Time-extension.} 
    By slight abuse of notation we define the curve $\mu_s: \R \to \Prob(\sfG)$ as the constant extension 
    \begin{align*}
        \mu_s = \begin{cases}
            \mu_0 &: s\le 0\\ \mu_s &:0\leq s \leq 1\\ \mu_1 &: 1\le s
        \end{cases}
    \end{align*}
    keeping the absolute continuity and we can write $s\mapsto\mu_s \in \AC^2\bra*{\R; \Prob(\sfG)}$.

    \textbf{Step 2: Regularization in space.} 
    For $s\in\R$ and $\eps\in(0,1]$ we define $\mu_s^{\eps} \in \Prob(\sfG^{2\eps})$ by duality as in Definition \ref{def:Regular}. 
    From Proposition \ref{prop:PropertiesReg} we know that $\mu^\eps_s \weakcvg \mu_s$ for all $s\in\R$ as $\eps \to 0$ and $s\mapsto\mu_s^\eps \in \AC^2\bra*{\R; \Prob(\sfG^{2\eps})}$ with $\limsup_{\eps\to 0}\abs*{\Dot{\mu}^\eps_s} \leq \abs*{\Dot{\mu}_s}$ for a.e. $s\in\R$. Additionally, the same proposition gives $\mu^\eps_s \ll \leb$ with density denoted by $ f^\eps_s\in L^1(\sfG^{2\eps})$. 
    We extend all measures and their respective densities by zero to measures (densities) on the extended graph $\sfG^{2}$, noting that $\sfG\subset\sfG^{2\eps}\subset\sfG^2$ for all $\eps \in (0,1]$. Let us denote by $\Ent^\eps: \Prob(\sfG^{2\eps}) \to [0,\infty]$ the entropy defined on $\sfG^{2\eps}$ depending on $\eps\in(0,1]$ as well.  
    Since $\eta(0) = 0\cdot \log 0  = 0$ by continuous extension, we obtain
    \begin{align*}
        \Ent(\mu_s) = \Ent^1(\mu_s) \quad\text{and}\quad \Ent^{\eps}(\mu^\eps_s) = \Ent^1(\mu_s^\eps).
    \end{align*}
    Together with the weak lower semicontinuity of the logarithmic entropy, we infer that
    \begin{align}\label{eq:LiminfEntrAppr}
        \Ent(\mu_s) = \Ent^1(\mu_s) \leq \liminf\limits_{\eps\to0} \Ent^1(\mu_s^\eps)   = \liminf\limits_{\eps\to0} \Ent^\eps(\mu_s^\eps)  
    \end{align}
    for all $s\in\R$.
    Moreover, the equality $\Ent(\mu_s) = \lim_{\eps \to 0} \Ent^\eps(\mu_s^\eps)$ holds true. 
    If $\Ent(\mu_s) = \infty$, then $\Ent(\mu_s)\geq \limsup_{\eps\to0}\Ent^\eps(\mu_s^\eps)$, thus proving the claim. On the other hand, if $\Ent(\mu_s) <\infty$, then $\mu_s\ll\leb$ with $\frac{\dx{\mu_s}}{\dx{\leb}} =  f_s$ for $s\in\R$ and, using Proposition \ref{prop:PropertiesReg}, we can write
    \begin{align}
        \Ent^\eps(\mu_s^\eps) &= \int_\sfG \eta( f^\eps_s) \dx{\leb} + \int_{\sfG^{2\eps}\setminus\sfG} \eta( f^\eps_s) \dx{\leb} \notag\\
        &= \int_\sfG \eta\bra*{\frac{1}{2\eps}\int_{\sfe\cap I^\eps_\sfe(x)}  f_s(y) \dx{y}} \dx{x} + \int_{\sfG^{2\eps}\setminus\sfG} \eta\bra*{\frac{1}{2\eps} \sum\limits_{\sfe\in\sfE(\sfv)}\int_{\sfe\cap I^\eps_\sfe(x)}  f_s(y) \dx{y}} \dx{x} \label{eq:EntrEst_extended}
    \end{align}
    for all $\eps\in(0,1]$ and arbitrary $s\in\R$.
    In the above equation and in the following, we choose $\sfv\in\sfV$ in such a way that $x\in \sfe^{2\eps}_\sfv$ for $x\in\sfG^{2\eps}\setminus\sfG$. By construction, this choice is unique.
    We treat both terms in \eqref{eq:EntrEst_extended} separately. First, recall that $\eta:\R\to[0,\infty)$ is convex. In particular, the inequality
    \begin{align}\label{eq:cvx}
        \eta(\lambda r) \leq \lambda \eta(r) + (1-\lambda)\eta(0) = \lambda \eta(r)
    \end{align}
    holds true for all $\lambda\in[0,1]$ and $r\in\R$. Since $\leb(\sfe\cap I^{\eps}_\sfe(x)) \leq 2\eps$ for all $x\in\sfG$ and $\sfe\in\sfE$ such that $x\in\sfe$, we can choose $\lambda = \frac{\leb(\sfe\cap I^{\eps}_\sfe(x))}{2\eps}$ in this estimate.
    Applying Jensen's inequality in the second line and \eqref{eq:cvx} in the third, we conclude for the first term in \eqref{eq:EntrEst_extended} that 
    \begin{align*}
        \int_\sfG \eta\bra*{\frac{1}{2\eps}\int_{\sfe\cap I^\eps_\sfe(x)}  f_s(y) \dx{y}} \dx{x} &= \int_\sfG \eta\bra*{\frac{1}{\leb(\sfe\cap I^{\eps}_\sfe(x))}\int_{\sfe\cap I^\eps_\sfe(x)} \frac{\leb(\sfe\cap I^{\eps}_\sfe(x))}{2\eps} f_s(y) \dx{y}} \dx{x}\\
        &\leq \int_\sfG\frac{1}{\leb(\sfe\cap I^{\eps}_\sfe(x))} \int_{\sfe\cap I^\eps_\sfe(x)} \eta\bra*{\frac{\leb(\sfe\cap I^{\eps}_\sfe(x))}{2\eps} f_s(y)} \dx{y} \dx{x}\\
        &\leq \int_\sfG\frac{1}{2\eps}\int_{\sfe\cap I^\eps_\sfe(x)} \eta\bra*{ f_s(y)} \dx{y} \dx{x} \\&\eqqcolon \int_\sfG (\eta\circ f)^\eps \dx{x} .
    \end{align*}
    In a similar way, applying Jensen's inequality twice, the second summand can be bounded by
    \begin{align*}
        &\int_{\sfG^{2\eps}\setminus\sfG} \eta\bra*{\frac{1}{2\eps} \sum\limits_{\sfe\in\sfE(\sfv)}\int_{\sfe\cap I^\eps_\sfe(x)}  f_s(y) \dx{y}} \dx{x} \\
        =& \int_{\sfG^{2\eps}\setminus\sfG} \eta\bra*{ \frac{1}{\abs*{\sfE(\sfv)}}\sum\limits_{\sfe\in\sfE(\sfv)} \frac{1}{\leb(\sfe\cap I^{\eps}_\sfe(x))}\int_{\sfe\cap I^\eps_\sfe(x)} \frac{\leb(\sfe\cap I^{\eps}_\sfe(x))}{2\eps}\abs*{\sfE(\sfv)}  f_s(y) \dx{y}} \dx{x}\\
        \leq& \int_{\sfG^{2\eps}\setminus\sfG} \frac{1}{\abs*{\sfE(\sfv)}} \sum\limits_{\sfe\in\sfE(\sfv)} \frac{1}{\leb(\sfe\cap I^{\eps}_\sfe(x))} \int_{\sfe\cap I^\eps_\sfe(x)} \eta\bra*{\frac{\leb(\sfe\cap I^{\eps}_\sfe(x))}{2\eps}\abs*{\sfE(\sfv)}  f_s(y)} \dx{y} \dx{x}\\
        \leq& \int_{\sfG^{2\eps}\setminus\sfG} \frac{1}{\abs*{\sfE(\sfv)}} \sum\limits_{\sfe\in\sfE(\sfv)} \frac{1}{2\eps} \int_{\sfe\cap I^\eps_\sfe(x)} \eta\bra*{\abs*{\sfE(\sfv)}  f_s(y)} \dx{y} \dx{x} .
    \end{align*}
    For arbitrary $C>0$ and $r\in\R$ it also holds that 
    \begin{align*}
        \eta(Cr) = Cr\log(Cr)  = rC\log C + Cr\log r = C\eta(r) + r\eta(C).
    \end{align*}
    Applying the above inequality with the choice $C=\abs*{\sfE(\sfv)}$ then yields
    \begin{align*}
        &\int_{\sfG^{2\eps}\setminus\sfG} \frac{1}{\abs*{\sfE(\sfv)}} \sum\limits_{\sfe\in\sfE(\sfv)} \frac{1}{2\eps} \int_{\sfe\cap I^\eps_\sfe(x)} \eta\bra*{\abs*{\sfE(\sfv)}  f_s(y)} \dx{y} \dx{x}\\
        =& \int_{\sfG^{2\eps}\setminus\sfG} \sum\limits_{\sfe\in\sfE(\sfv)} \frac{1}{2\eps} \int_{\sfe\cap I^\eps_\sfe(x)} \eta\bra*{ f_s(y)} \dx{y} \dx{x} + \int_{\sfG^{2\eps}\setminus\sfG} \sum\limits_{\sfe\in\sfE(\sfv)} \log\bra*{\abs*{\sfE(\sfv)}}\frac{1}{2\eps} \int_{\sfe\cap I^\eps_\sfe(x)}  f_s(y) \dx{y} \dx{x}\\
        \leq& \int_{\sfG^{2\eps}\setminus\sfG} \bra*{\eta\circ f_s}^\eps(x) \dx{x} + \log\bra*{\deg_{\max}}\int_{\sfG^{2\eps}\setminus\sfG}  f_s^\eps(x) \dx{x},
    \end{align*}
    where we use the notation
    \begin{align*}
        \bra*{\eta\circ f_s}^\eps(x) \coloneqq \sum\limits_{\sfe\in\sfE(\sfv)} \frac{1}{2\eps} \int_{\sfe\cap I^\eps_\sfe(x)} \eta\bra*{ f_s(y)} \dx{y}
    \end{align*}
    for $x\in \sfG^{2\eps}\setminus\sfG$. From these two estimates we obtain
    \begin{align}
        \limsup\limits_{\eps\to 0}\Ent^\eps(\mu_s^\eps)
        \leq \limsup\limits_{\eps\to 0} &\int_{\sfG^{2\eps}} \bra*{\eta\circ f_s}^\eps(x) \dx{x} + \log\bra*{\deg_{\max}} \limsup\limits_{\eps\to 0}\mu^\eps\bra*{\sfG^{2\eps}\setminus\sfG}.\label{eq:split_limsup}
    \end{align}
    Since $\Ent(\mu_s)<\infty$ by assumption, we have $\eta\circ f_s \in L^1(\sfG)$ and we can define a measure $\Tilde{\mu}_s\in \Meas(\sfG^{2\eps})$ from this density by $\frac{\dx{\Tilde{\mu}_s}}{\dx{\leb}} = \eta\circ f_s$. Then, constructing $\Tilde{\mu}_s^\eps$ as in Definition \ref{def:Regular}, we obtain that $(\eta\circ f)^\eps$ coincides with $\Tilde{f}^\eps_s = \frac{\dx{\Tilde{\mu}_s^\eps}}{\dx{\leb}}$ from Proposition \ref{prop:PropertiesReg} \ref{density}. The weak convergence now gives
    \begin{align*}
        \limsup\limits_{\eps\to 0} \int_{\sfG^{2\eps}} \bra*{\eta\circ f_s}^\eps(x) \dx{x} = \limsup\limits_{\eps\to 0} \int_{\sfG^{2\eps}}  \dx{\Tilde{\mu}_s^\eps} = \int_\sfG \dx{\Tilde{\mu}_s} = \int_\sfG (\eta\circ f_s)(x) \dx{x} = \Ent(\mu_s).
    \end{align*}
    For the second term in \eqref{eq:split_limsup} we extend $\mu_s^\eps$ and $\mu_s$ by zero to measures on $\sfG^2$ and because of $\mu_s\ll\leb{}$ it holds that $\mu_s^\eps(\sfV) = 0 = \mu_s(\sfG^2\setminus\sfG \cup \sfV)$. 
    Again, from the weak convergence established in Proposition \ref{prop:PropertiesReg} we infer that
    \begin{align*}
        \int_{\sfG^2} \varphi(x) \dx{\mu_s^\eps(x)} &= \int_{\sfG^{2\eps}} \varphi\restrto{\sfG^{2\eps}}(x) \dx{\mu_s(x)} \longrightarrow \int_\sfG \varphi\restrto{\sfG}(x) \dx{\mu_s(x)} = \int_{\sfG^2} \varphi(x) \dx{\mu_s(x)},
    \end{align*}
    holds for all $\varphi\in C(\sfG^2)$, which gives $\mu_s^\eps\weakcvg\mu_s$ on $\sfG^2$. Applying the generalized version of Fatou's lemma \cite{ambrosio_bakryemery_2015}[Lemma 3.3] to the constant sequence $ \one_{(\sfG^2\setminus\sfG)\cup\sfV}$ then yields
    \begin{align*}
        \limsup\limits_{\eps\to 0} \mu_s^\eps(\sfG^{2\eps}\setminus\sfG) = \limsup\limits_{\eps\to 0} \mu_s^\eps\bra*{(\sfG^2\setminus\sfG) \cup\sfV} \leq \mu\bra*{(\sfG^2\setminus\sfG)\cup\sfV} = 0.
    \end{align*}
    Combining these estimates, \eqref{eq:split_limsup} can be bounded from above by
    \begin{align*}
        \limsup\limits_{\eps\to 0}\Ent^\eps(\mu_s^\eps) \leq \Ent(\mu_s) + 0 = \Ent(\mu_s)
    \end{align*}
    and thus, together with \eqref{eq:LiminfEntrAppr}, we obtain the desired convergence 
    \begin{align*}
        \lim\limits_{\eps\to 0}\Ent^\eps(\mu_s^\eps) = \Ent(\mu_s)
    \end{align*}
    for arbitrary $s\in\R$.
    
    \textbf{Step 3: Regularization in time.}
    Fix $\eps\in(0,1]$ and let $\zeta:\R \to \R$ be a standard mollifier, meaning that $\zeta\in C^\infty_c(\R)$ with support contained in $[-1,1]$, $0\leq\zeta\leq 1$ and $\int_\R\zeta(\tau)\dx{\tau}=1$. Recall that $\mu_s^\eps = f_s^\eps \leb$ with $f_s^\eps\in L^2(\sfG)$ as in Proposition \ref{prop:PropertiesReg}.
    For $k\in\N$ we define $\zeta_k(\tau) = k\zeta(k\tau)$ and for arbitrary $s\in\R$ let
    \begin{align*}
         f^{\eps,k}_s \coloneqq \bra*{\zeta_k \ast  f_\cdot^\eps}(s) = \int_\R \zeta_k(s-\tau)  f^\eps_\tau \dx{\tau}  = \int_\R \zeta_k(\tau)  f^\eps_{s-\tau} \dx{\tau} 
    \end{align*}
    which induces the measure $\mu_s^{\eps,k} \in \Prob(\sfG^{2\eps})$ by 
    \begin{align*}
        \mu_s^{\eps,k} (A) = \int_A  f_s^{\eps,k}(x) \dx{\leb(x)},
    \end{align*}
    or equivalently $ f^{\eps,k}_s = \frac{\dx{\mu^{\eps,k}_s}}{\dx{\leb}}$. Note that this regularization procedure can be applied to the curve $s\mapsto\mu_s$ as well with density $\frac{\dx{\mu_s}}{\dx{\leb}} = f_s$, leading to the regularized measure $\mu_s^k$ instead. In particular $s\mapsto \mu_s^k \in \AC^2(\R;\Prob(\sfG))$, allowing us to conclude from the previous step that
    \begin{align}\label{eq:CvgFixk}
        \lim\limits_{\eps\to 0} \Ent^\eps(\mu_s^{\eps,k}) = \Ent(\mu_s^k).
    \end{align}
    It remains to characterize the limit as $k\to \infty$. First, we show weak convergence of the regularized measure. 
    Let $s,s'\in\R$ be given and let $\pi_{s,s'}^\eps\in \Pi(\mu_s^\eps, \mu_{s'}^\eps)$ be an admissible transport plan as in \eqref{eq:AdmPlans}. We define $\pi^{\eps,k}_s\in\Prob(\sfG^{2\eps}\times\sfG^{2\eps})$ by duality as the measure on $\sfG^{2\eps}\times\sfG^{2\eps}$ such that for all continuous functions $\varphi:\sfG^{2\eps}\times\sfG^{2\eps}\to\R$ it holds that 
    \begin{align*}
        \int_{\sfG^{2\eps}\times\sfG^{2\eps}} \varphi(x,y)\dx{\pi^{\eps,k}_s(x,y)} = \int_\R \zeta_k(s-\tau) \int_{\sfG^{2\eps}\times\sfG^{2\eps}} \varphi(x,y) \dx{\pi^{\eps}_{s,\tau}(x,y)} \dx{\tau}.
    \end{align*}
    Note that the right-hand side indeed defines a continuous linear functional on $C(\sfG^{2\eps}\times\sfG^{2\eps})$ since 
    \begin{align*}
        &\int_\R \zeta_k(s-\tau) \int_{\sfG^{2\eps}\times\sfG^{2\eps}} \varphi(x,y) \dx{\pi^{\eps}_{s,\tau}(x,y)} \dx{\tau} \\&\leq \norm{\varphi}_{C(\sfG^{2\eps}\times\sfG^{2\eps})} \int_\R \zeta_k(s-\tau) \pi^{\eps}_{s,\tau}(\sfG^{2\eps}\times\sfG^{2\eps}) \dx{\tau} 
        = \norm{\varphi}_{C(\sfG^{2\eps}\times\sfG^{2\eps})} \int_\R \zeta_k(s-\tau) \dx{\tau} =  \norm{\varphi}_{C(\sfG^{2\eps}\times\sfG^{2\eps})}.
    \end{align*}
    Moreover, it holds that 
    \begin{align*}
        \int_{\sfG^{2\eps}\times\sfG^{2\eps}} \varphi(x) \dx{\pi^{\eps,k}_s(x,y)} = \int_\R \zeta_k(s-\tau) \int_{\sfG^{2\eps}} \varphi(x) \dx{\mu^\eps_s(x)} \dx{\tau} = \int_{\sfG^{2\eps}} \varphi(x) \dx{\mu^\eps_s(x)}
    \end{align*}
    for all $\varphi\in C(\sfG^{2\eps})$ and
    \begin{align*}
        \int_{\sfG^{2\eps}\times\sfG^{2\eps}} \varphi(y) \dx{\pi^{\eps,k}_s(x,y)} = \int_\R \zeta_k(s-\tau) \int_{\sfG^{2\eps}} \varphi(y) \dx{\mu^\eps_\tau(y)} \dx{\tau} = \int_{\sfG^{2\eps}} \varphi(y) \dx{\mu^{\eps,k}_s(y)}
    \end{align*}
    respectively, so that $\pi^{\eps,k}_s\in \Pi(\mu^\eps_s, \mu^{\eps,k}_s)$.
    Choosing $\pi^\eps_{s,\tau}\in\Pi(\mu^\eps_s,\mu^\eps_\tau)$ optimal, thus minimizing \eqref{eq:DefW2}, we obtain
    \begin{align*}
        W_2^2(\mu^{\eps,k}_s, \mu^{\eps}_s) &\leq \int_{\sfG^{2\eps}\times\sfG^{2\eps}} d^2(x,y) \dx{\pi^{\eps,k}_s(x,y)} = \int_\R \zeta_k(s-\tau) \int_{\sfG^{2\eps}\times\sfG^{2\eps}} d^2(x,y) \dx{\pi^\eps_{s,\tau}(x,y)} \dx{\tau} \\
        &= \int_\R \zeta_k(s-\tau) W_2^2(\mu^\eps_s, \mu^\eps_\tau) \dx{\tau} = (\zeta_k\ast W_2^2(\mu^\eps_s, \mu^\eps_\cdot))(s),
    \end{align*}
    where we used that $d^2(\cdot,\cdot)\in C(\sfG^{2\eps}\times\sfG^{2\eps})$. The limit as $k\to\infty$ reads
    \begin{align*}
        0 \leq\lim\limits_{k\to\infty} W_2^2(\mu^{\eps,k}_s, \mu^{\eps}_s) \leq \lim\limits_{k\to\infty} (\zeta_k\ast W_2^2(\mu^\eps_s, \mu^\eps_\cdot))(s) = 0,
    \end{align*}
    which implies $\mu^{\eps,k}_s\weakcvg \mu^\eps_s$ as $k\to\infty$ for all $s\in\R$. Again, the weak lower semicontinuity of the logarithmic entropy gives
    \begin{align*}
        \Ent^\eps(\mu_s^\eps) \leq \liminf\limits_{k\to\infty} \Ent^\eps(\mu^{\eps,k}_s)
    \end{align*}
    and we can show equality as well. For any $s,s'\in\R$ let $\pi^\eps_{s,s'}\in \Pi(\mu^\eps_s, \mu^\eps_{s'})$ be optimal. We define $\Bar{\pi}^{\eps,k}_{s,s'}\in \Prob(\sfG^{2\eps}\times\sfG^{2\eps})$ for $n\in\N$ by duality such that for all $\varphi\in C(\sfG^{2\eps}\times\sfG^{2\eps})$ we have
    \begin{align*}
        \int_{\sfG^{2\eps}\times\sfG^{2\eps}} \varphi(x,y)\dx{\Bar{\pi}^{\eps,k}_{s,s'}(x,y)} = \int_\R \zeta_k(\tau) \int_{\sfG^{2\eps}\times\sfG^{2\eps}} \varphi(x,y) \dx{\pi^{\eps}_{s-\tau, s'-\tau}(x,y)} \dx{\tau}.
    \end{align*}
    Following similar arguments as before, $\Bar{\pi}^{\eps,k}_{s,s'} \in \Pi(\mu^{\eps,k}_s, \mu^{\eps,k}_{s'})$ and we obtain 
    \begin{align*}
        W_2^2(\mu^{\eps,k}_s, \mu^{\eps,k}_{s'}) &\leq \int_{\sfG^{2\eps}\times\sfG^{2\eps}} d^2(x,y) \dx{\Bar{\pi}^{\eps,k}_{s,s'}(x,y)} = \int_\R \zeta_k(\tau) W_2^2(\mu^{\eps}_{s-\tau}, \mu^\eps_{s'-\tau}) \dx{\tau}\\
        &\leq \int_\R \zeta_k(\tau)\int_{s-\tau}^{s'-\tau} \abs{\Dot{\mu}^\eps_r}^2 \dx{r}\dx{\tau} = \int_s^{s'} \bra*{\zeta_k \ast \abs{\Dot{\mu}^\eps_\cdot}^2}(r) \dx{r}.
    \end{align*}
    Therefore, $s\mapsto \mu^{\eps,k}_s \in \AC^2(\R; \Prob(\sfG^{2\eps})$ with
    \begin{align*}
        \abs{\Dot{\mu}^{\eps,k}_s}^2 \leq \bra*{\zeta_k \ast \abs{\Dot{\mu}^\eps_\cdot}^2}(s)
    \end{align*}
    for a.e. $s\in\R$. Additionally, Jensen's inequality gives 
    \begin{align*}
        \Ent^\eps(\mu^{\eps,k}_s) &= \int_{\sfG^{2\eps}} \eta\bra*{ f^{\eps,k}_s(x)}\dx{x} \leq \int_{\sfG^{2\eps}} \int_\R \zeta_k(s-\tau) \eta\bra*{ f^\eps_\tau (x)}  \dx{\tau}{\dx{x}} = \bra*{\zeta_k\ast\Ent^\eps( \mu^\eps_\cdot)}(s).
    \end{align*}
    Taking the limit as $k\to\infty$ then yields
    \begin{align*}
        \lim\limits_{k\to\infty} \Ent^\eps( \mu^{\eps,k}_s) \leq \Ent^\eps( \mu^\eps_s)
    \end{align*}
    for all $s\in\R$, thus equality.

    \textbf{Step 4: Conclusion.}
    We restrict the curve to the time-interval $[0,1]$ on which it is absolutely continuous by construction, keeping the notation $s \mapsto \mu_s^{\eps,k} \in \AC^2\bra*{[0,1];\Prob(\sfG^{2\eps})}$.
    From the previous steps it follows that
    \begin{align*}
        \lim\limits_{\eps, k} \Ent^\eps(\mu_s^{\eps,k}) = \lim\limits_{k\to\infty}\lim\limits_{\eps\to0} \Ent^\eps(\mu_s^{\eps,k}) = \lim\limits_{\eps\to0}\lim\limits_{k\to\infty} \Ent^\eps(\mu_s^{\eps,k})
    \end{align*}
    for all $s\in[0,1]$. 
    Let $\calR^{\eps, k}: \AC^2\bra*{[0,1];\Prob(\sfG)} \to \AC^2\bra*{[0,1];\Prob(\sfG^{2\eps})}$, defined by $(s\mapsto \mu_s) \mapsto (s\mapsto \mu_s^{\eps,k})$ denote the regularization map. This map is linear by construction and therefore commutes with the dual heat flow
    \begin{align*}
        H_t \calR^{\eps,k} = \calR^{\eps,k} H_t
    \end{align*}
    for all $t\geq 0$. This gives
    \begin{align*}
        \lim\limits_{\eps, k} \Ent^\eps(H_t\mu_s^{\eps,k}) = \lim\limits_{\eps, k} \Ent^\eps\bra*{H_t(R^{\eps,k}\mu_s)} = \lim\limits_{\eps, k} \Ent^\eps\bra*{R^{\eps,k} H_t\mu_s} = \Ent(H_t\mu_s). 
    \end{align*}
    The choice $k=n$, $\eps = \frac{1}{n}$ and a diagonalization argument conclude the proof.
\end{proof}

With these preparations, we are now in a position to show the equivalences.

\begin{proof}[Proof of Theorem \ref{thm:equivalence}]
    We first show the relation between the weak Bakry-Émery estimate \eqref{eq:BEw} and the weak Evolutionary Variational Inequality \eqref{eq:EVI_weak}. Next, we establish the equivalence between \eqref{eq:BEw} and the weak curvature condition \eqref{eq:RCD_weak}.

    \textbf{\eqref{eq:BEw}$\Rightarrow$\eqref{eq:EVI_weak}:}
    By Theorem \ref{thm:kuwada_dual} the heat semigroup $(P_t)_{t\geq 0}$ extends to a dual semigroup $(H_t)_{t\geq0}$ for which the contraction estimate \eqref{eq:contr_weak} holds. 
    Let $0\leq t_0 \leq t_1 \leq 1$, $\mu_0\in\Dom(\Ent)$ and $\mu_1\in\Prob(\sfG)$ be given such that $t_1\neq 0$ and let $s\mapsto \mu_s  \in \AC^2\bra*{[0,1];\Prob(\sfG)}$ be a curve joining $\mu_0$ to $\mu_1$. Since $(\Prob(\sfG),W_2)$ is a geodesic space such curves exist and can be chosen as geodesics. Using Theorem \ref{thm:ExStrongReg}, we find a family of strongly regular curves $s\mapsto \mu_s^n\in\AC\bra*{[0,1];\Prob(\sfG^{2/n})}$ for $n\in\N$ with $\mu_s^n\weakcvg\mu_s$ as $n\to\infty$. To each of these curves we apply Theorem \ref{thm:IneqReg} for $\vartheta(s) = (1-s)t_0 + st_1$. This choice is admissible since $\vartheta\in C^2([0,1])$ with $\Dot{\vartheta} = t_1-t_0 \geq 0$ and $\Ddot{\vartheta}\equiv 0$. This gives 
    \begin{align*}
        \frac{1}{2}W_2^2(H_{t_1}\mu^n_1, H_{t_0}\mu^n_0) + (t_1 - t_0) \Ent^{1/n}_\delta(H_{t_1}\mu^n_1) \leq (t_1 - t_0) \Ent^{1/n}_\delta(H_{t_0}\mu^n_0) + \frac{1}{2I_\vartheta(1)} \int_0^1\abs{\Dot{\mu}_s^n}^2\dx{s}
    \end{align*}
    for $\delta>0$. Since $\Ent^{1/n}_\delta(H_{t_1}\mu_1^n) \geq \Ent^{1/n}(H_{t_1}\mu^n_1)$ and $\lim_{\delta\to0} \Ent^{1/n}_\delta(H_{t_0}\mu^n_0) = \Ent^{1/n}(H_{t_0}\mu^n_0)$ by \eqref{eq:LimRegEntr}, it follows that
    \begin{align}\label{eq:EVIw_SRcurve}
        \frac{1}{2} W_2^2(H_{t_1}\mu^n_1, H_{t_0}\mu^n_0) + (t_1 - t_0) \Ent^{1/n}(H_{t_1}\mu^n_1) \leq (t_1 - t_0) \Ent^{1/n}(H_{t_0}\mu^n_0) + \frac{1}{2I_\vartheta(1)} \int_0^1\abs{\Dot{\mu}_s^n}^2\dx{s}.
    \end{align}
    Moreover, the contraction estimate implies that
    \begin{align*}
        W_2^2(H_{t_i}\mu_i^n, H_{t_i}\mu_i) \leq C e^{-Kt_i} W_2(\mu_i^n, \mu_i)
    \end{align*}
    holds on the extended graph $\sfG^2$ for $i=0,1$, so that $H_{t_i}\mu_i^n \weakcvg H_{t_i}\mu_i$ as $n\to\infty$. Every function $\varphi\in C(\sfG^{2/n})$ admits a continuous extension to a function on $\sfG^2$ and we also have that
    \begin{align*}
        \int_{\sfG^{2/n}} \varphi(x) \dx{H_{t_i}\mu_i^n} \to \int_\sfG \varphi\restrto{\sfG}(x) \dx{H_{t_i}\mu_i}
    \end{align*}
    as $n\to\infty$. Arguing as in the proof of Theorem \ref{thm:ExStrongReg} we obtain 
    \begin{align*}
        \Ent(H_{t_1}\mu_1) \leq \liminf\limits_{n\to\infty} \Ent^{1/n}(H_{t_1}\mu_1^n) \quad \text{and} \quad \Ent(H_{t_0}\mu_0) = \lim\limits_{n\to\infty} \Ent^{1/n}(H_{t_0}\mu_0^n).
    \end{align*}
    Additionally, the triangle inequality gives 
    \begin{align*}
        &\abs*{W_2(H_{t_1}\mu_1^n, H_{t_0}\mu_0^n) - W_2(H_{t_1}\mu_1, H_{t_0}\mu_0)} \\
        \le& \abs*{W_2(H_{t_1}\mu_1^n, H_{t_0}\mu_0^n) - W_2(H_{t_1}\mu_1^n, H_{t_0}\mu_0)} + \abs*{W_2(H_{t_1}\mu_1^n, H_{t_0}\mu_0) - W_2(H_{t_1}\mu_1, H_{t_0}\mu_0)}\\
        \leq& W_2(H_{t_0}\mu_0^n, H_{t_0}\mu_0) + W_2(H_{t_1}\mu_1^n, H_{t_1}\mu_1) 
    \end{align*}
    and therefore $W_2(H_{t_1}\mu_1^n, H_{t_0}\mu_0^n) \to W_2(H_{t_1}\mu_1, H_{t_0}\mu_0)$ as $n\to\infty$, again using the contraction estimate and the weak convergence $\mu_s^n\weakcvg\mu_s$.
    Taking the limit as $n\to\infty$ in \eqref{eq:EVIw_SRcurve} then yields
    \begin{align*}
        \frac{1}{2} W_2^2(H_{t_1}\mu_1, H_{t_0}\mu_0) + (t_1 - t_0) \Ent(H_{t_1}\mu_1) \leq (t_1 - t_0) \Ent(H_{t_0}\mu_0) + \frac{1}{2I_\vartheta(1)} \int_0^1\abs{\Dot{\mu}_s}^2\dx{s},
    \end{align*}
    which is precisely \eqref{eq:EVI_weak} because of $s\mapsto\mu_s$ being a geodesic. In the case $t_1=t_0=0$ the same arguments can be applied under additional assumption $\mu_{t_1}\in\Dom(\Ent)$, ensuring that $(t_1 - t_0)\Ent(H_{t_1}\mu_1)$ is well-defined and equal to zero.

    \textbf{\eqref{eq:EVI_weak}$\Rightarrow$\eqref{eq:BEw}:}
    This implication follows from Remark \ref{rem:EVI_to_Kuwada} together with Theorem \ref{thm:kuwada_dual}.

    \textbf{\eqref{eq:BEw}$\Rightarrow$\eqref{eq:RCD_weak}:}   
    Again, by Theorem \ref{thm:kuwada_dual} the contraction estimate is satisfied and we already showed the equivalence of \eqref{eq:BEw} to \eqref{eq:EVI_weak}. Let $t\geq0$ and $h>0$ be given and let $s\mapsto\mu_s\in\AC^2\bra*{[0,1];\Prob(\sfG)}$ be a geodesic joining $\mu_0$ and $\mu_1$. Since $\mu_0, \mu_1\in \Dom(\Ent)$, applying \eqref{eq:EVI_weak} to the pair $\mu_0\in \Dom(\Ent)$, $\mu_s\in\Prob(\sfG)$ with $t_0=t$ and $t_1=t+h$ for arbitrary $s\in[0,1]$ gives 
    \begin{align}\label{eq:WeakEVI_0s}
        \frac{1}{2}W_2^2(H_{t+h}\mu_s, H_t\mu_0) - \frac{1}{R(t,t+h)} W_2^2(\mu_s,\mu_0) \leq h\bra*{\Ent(H_t\mu_0) - \Ent(H_{t+h}\mu_s)}.
    \end{align}
    On the other hand, choosing $\mu_1\in \Dom(\Ent)$, $\mu_s\in\Prob(\sfG)$ with $t_0=t$ and $t_1=t+h$ instead yields
    \begin{align}\label{eq:WeakEVI_1s}
        \frac{1}{2}W_2^2(H{t+h}\mu_s, H_t\mu_1) - \frac{1}{R(t,t+h)} W_2^2(\mu_s,\mu_1) \leq h\bra*{\Ent(H_t\mu_1) - \Ent(H_{t+h}\mu_s)}.
    \end{align}
    Multiplying \eqref{eq:WeakEVI_0s} by $(1-s)$, \eqref{eq:WeakEVI_1s} by $s$ and adding both inequalities results in 
    \begin{align}\label{eq:SumEVIw}
        &\frac{1-s}{2}W_2^2(H_{t+h}\mu_s, H_t\mu_0) - \frac{1}{2R(t,t+h)}\bra*{(1-s) W_2^2(\mu_s,\mu_0) + sW_2^2(\mu_s,\mu_1)} + \frac{s}{2}W_2^2(H_{t+h}\mu_s, H_t\mu_1)\notag\\
        \leq& h\bra*{(1-s)\Ent(H_t\mu_0) + s\Ent(H_t\mu_1) - \Ent(H_{t+h}\mu_s)}.
    \end{align}
    Since $s\mapsto\mu_s$ is a geodesic we have that 
    \begin{align}\label{eq:Scaling_Geodesic}
        (1-s)W_2^2(\mu_s,\mu_0) + sW_2^2(\mu_s,\mu_1) &= (1-s)s^2W_2^2(\mu_0,\mu_1) + s(1-s)^2W_2(\mu_0,\mu_1) \notag\\
        &= s(1-s)W_2^2(\mu_0,\mu_1).
    \end{align}
    Further, for all $s\in[0,1]$ and $a,b\in\R$ it holds that $(1-s)a^2 + sb^2 \geq s(1-s)(a+b)^2$ and therefore
    \begin{align}\label{eq:Est_t+h_t}
        (1-s) W_2^2(H_{t+h}\mu_s, H_t\mu_0) + s W_2^2(H_{t+h}\mu_s,\mu_1) &\geq s(1-s)\bra*{W_2(H_{t+h}\mu_s, H_t\mu_0) + W_2(H_{t+h}\mu_s,\mu_1)}^2 \notag\\
        &\geq s(1-s) W_2^2(H_t\mu_0, H_t\mu_1)^2,
    \end{align}
    where we have used the triangle inequality in the second line. Combining \eqref{eq:SumEVIw} with \eqref{eq:Scaling_Geodesic} and \eqref{eq:Est_t+h_t} then gives
    \begin{align*}
        \frac{s(1-s)}{2}W_2^2(H{t}\mu_0, H_t\mu_1) - \frac{s(1-s)}{2R(t,t+h)}W_2^2(\mu_0,\mu_1) \leq h\bra*{(1-s)\Ent(H_t\mu_0) + s\Ent(H_t\mu_1) - \Ent(H_{t+h}\mu_s)}.
    \end{align*}
    Dividing by $h>0$ and rearranging the inequality above recovers \eqref{eq:RCD_weak}.

    \textbf{\eqref{eq:RCD_weak}$\Rightarrow$\eqref{eq:BEw}:}
    The contraction estimate \eqref{eq:contr_weak} holds true by assumption. This implies \eqref{eq:BEw} due to Theorem \ref{thm:kuwada_dual}.
\end{proof}

Let $\mu \ll \leb$ with $\frac{\dx{\mu}}{\dx{\leb}} =  f \in W^{1,1}(\sfG)$ be given and recall the definition of the Fisher Information \eqref{eq:FI}.
As a consequence of the chain-rule established in \cite{Erbar_2022}[Proposition 5.6], we can reformulate \ref{RCD_equiv} as follows. 

\begin{corollary}
    Let $s\mapsto\mu_s\in \AC^2\bra*{[0,1]; \Prob(\sfG)}$ be a $W_2$-geodesic connecting $\mu_0,\mu_1\in\Dom(\Ent)$ with $\mu_{s}\in\Dom(\Ent)$ for some $s\in(0,1)$. Then, for all $t\geq0$ and $h>0$ it holds that
    \begin{align*}
        \Ent(\mu_{s})& \leq (1-s) \Ent(H_t\mu_0) + s \Ent(H_t\mu_1) \\
        &+\frac{s(1-s)}{2h}\bra*{\frac{1}{R(t,t+h)} W_2^2(\mu_0,\mu_1) - W_2^2(H_t\mu_0,H_t\mu_1)} + \int_0^{t+h} \calI(H_\tau\mu_{s})\dx{\tau}.
    \end{align*}
\end{corollary}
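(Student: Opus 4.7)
The plan is to combine the weak curvature condition \eqref{eq:RCD_weak}, just established in Theorem \ref{thm:equivalence}, with the chain rule for the logarithmic entropy along the heat flow provided by \cite{Erbar_2022}[Proposition 5.6]. The assumption $\mu_s \in \Dom(\Ent)$ is exactly what is needed to invoke that chain rule at the intermediate point of the geodesic, and the rest is simple algebraic substitution.

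The first step is to apply \eqref{eq:RCD_weak} to the geodesic $s\mapsto\mu_s$ at the given $s\in(0,1)$ for the prescribed parameters $t\geq 0$ and $h>0$. This yields
\begin{align*}
    \Ent(H_{t+h}\mu_s) \leq (1-s)\Ent(H_t\mu_0) + s\Ent(H_t\mu_1) + \frac{s(1-s)}{2h}\bra*{\frac{1}{R(t,t+h)} W_2^2(\mu_0,\mu_1) - W_2^2(H_t\mu_0, H_t\mu_1)}.
\end{align*}
The right-hand side already matches the first line of the claimed inequality.

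The second step is to rewrite $\Ent(H_{t+h}\mu_s)$ in terms of $\Ent(\mu_s)$ via the entropy dissipation identity
\begin{align*}
    \Ent(H_{t+h}\mu_s) - \Ent(\mu_s) = -\int_0^{t+h} \calI(H_\tau\mu_s)\dx{\tau},
\end{align*}
which holds under the hypothesis $\mu_s\in\Dom(\Ent)$ by the chain rule of \cite{Erbar_2022}[Proposition 5.6]. Rearranging gives $\Ent(\mu_s) = \Ent(H_{t+h}\mu_s) + \int_0^{t+h}\calI(H_\tau\mu_s)\dx{\tau}$, and substituting this identity into the previous display produces exactly the claimed bound.

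I do not expect any genuine obstacle here: the only nontrivial input is the applicability of the chain rule, which requires $\mu_s\in\Dom(\Ent)$, and this is explicitly assumed. Note that the validity of \eqref{eq:RCD_weak} on $\sfG$ is guaranteed by Theorem \ref{thm:equivalence} together with Theorem \ref{thm:Bew_Graph}, so the corollary holds unconditionally on any metric graph.
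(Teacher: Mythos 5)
Your proof is correct and follows essentially the same route as the paper: apply \eqref{eq:RCD_weak} to the geodesic at the intermediate point and substitute the entropy dissipation identity from \cite{Erbar_2022}[Proposition 5.6]. The only detail the paper adds is a citation of \cite{stefani_generalized_2022}[Lemma 4.1] guaranteeing that $\int_0^{t+h}\calI(H_\tau\mu_s)\dx{\tau}$ is finite, so that the rearrangement is free of $\infty-\infty$ ambiguities.
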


\begin{proof}
    From \cite{Erbar_2022}[Proposition 5.6] we know that
    \begin{align*}
        \Ent(H_{t+h}\mu_{s}) - \Ent(\mu_{s}) = -\int_0^{t+h}\calI(P_\tau\mu_{s})\dx{\tau}
    \end{align*}
    and the right-hand side is finite by \cite{stefani_generalized_2022}[Lemma 4.1]. Substituting this equality into \ref{RCD_equiv} and rearranging the terms proves the claim.
\end{proof}

Let $h>0$ and define
\begin{align*}
    \omega(s) \coloneqq \frac{s(1-s)}{2h}\pra*{\frac{1}{R(0,h)}-1} W_2^2(\mu_0,\mu_1) + \int_0^{h} \calI(H_\tau\mu_{s})\dx{\tau}.
\end{align*}
Choosing $t=0$ in the corollary above gives the estimate
\begin{align*}
    \Ent(\mu_{s})& \leq (1-s) \Ent(\mu_0) + s \Ent(\mu_1) + \omega(s),    
\end{align*}
which we can think of as a distorted version of geodesic convexity. The term $\omega(s)$ quantifies how far the entropy is from being $K$-convex along geodesics. It depends on the curvature function $c(t)=Ce^{-Kt}$ through $R(0,h)$ as well as the entropy dissipation given by the Fisher Information, thus encoding information about the geometric structure of the underlying graph.

\section{Outlook}

We showed that metric graphs satisfy weak notions of lower curvature bounds with the focus on their dimensionless formulations. At present it is not known whether the more precise conditions including the dimensional parameter $N\in(0,\infty)$ admit a similar extension.  
The equivalence result gives rise to a new notion of gradient flow on these spaces, namely by $\EVIw$. We hope that in the future this concept can be applied to extend known results from $\RCD{K}$ settings to the weaker $\RCDw$-spaces. One possible application is the generalization of the results from \cite{monsaingeon_dynamical_2023} to metric graphs. Let $(X,d)$ be an $\RCD{K}$-space for $K\geq 0$. The dynamic Schrödinger problem is given by
\begin{align*}
    \inf\limits_{(\mu,v)\in{\rm CE}} \frac{1}{2}\int_0^1 \int_X \abs*{v(t,x)}^2 \dx{\mu(t,x)} \dx{t} + \frac{\beta^2}{2}\int_0^1 \int_X \abs*{\nabla \log\bra*{f(t,x)}}^2 \dx{\mu(t,x)} \dx{t}
\end{align*}
for $f = \frac{\dx{\mu}}{\dx{\leb}}$, where the infimum runs over weak solutions of the continuity equation 
\begin{align*}
    \begin{cases}
        \partial_t \mu + \nabla\cdot(v\mu) = 0 \\ \mu(0,\cdot) = \mu_0(\cdot), \quad \mu(1,\cdot) = \mu_1(\cdot)
    \end{cases}
\end{align*}
denoted by $\rm CE$.
In \cite{monsaingeon_dynamical_2023} it has been shown that the dynamic Schrödinger problem $\Gamma$-converges to the dynamic optimal transport problem
\begin{align*}
    \inf\limits_{(\mu,v)\in{\rm CE}} \frac{1}{2}\int_0^1 \int_X \abs*{v(t,x)}^2 \dx{\mu(t,x)} \dx{t}.
\end{align*}
In particular, the construction of their recovery sequence is based on \eqref{eq:EVI} and the contraction estimate \eqref{eq:ContrEst}. On metric graphs, only the weak properties \eqref{eq:EVI_weak} and \eqref{eq:contr_weak} hold true and so far it is not known whether these conditions suffice to conclude $\Gamma$-convergence.

On the other hand, the present work exclusively deals with compact metric graphs having a finite number of edges with finite length each. However, \cite{baudoin_differential_2018} proved the gradient bound 
\begin{align*}
    \sqrt{\Gamma(P_tf)} \leq Ce^{Kt}P_t\sqrt{\Gamma(f)}
\end{align*}
with $K\geq0$ on non-compact graphs, which would lead to negative curvature. Despite this result at hand, showing the equivalences in this setting remains open, the reason being the identification of the heat equation as a Wasserstein gradient flow. In \cite{Erbar_2022} the result has been shown only for compact graphs.

\section*{Acknowledgments}
The author thanks Jan-F. Pietschmann and Gianna Götzmann for valuable discussions and helpful comments and remarks, as well as Jonas Stange for bringing the reference \cite{baudoin_differential_2018} to the author’s attention.

\nocite*
\bibliographystyle{alpha}
\bibliography{literature}

\end{document}